\newtheorem{assumption}{Assumption}[section]
\newcommand{\essinf}{\mathop{\mathrm{ess~inf}}}
\newcommand{\Ucc}{\mathcal{U}^{(2)}}
\newcommand{\Uc}{\mathcal{U}}
\newcommand{\Vc}{\mathcal{V}}
\newcommand{\sym}{\mathop{\mathrm{sym}}}
\newcommand{\skw}{\mathop{\mathrm{skw}}}
\newcommand{\veps}{\varepsilon}
\newcommand{\TT}{T^{(2)}}
\newcommand{\UU}{U^{(2)}}
\newcommand{\VV}{V^{(2)}}
\newcommand{\bb}{b^{(2)}}
\newcommand{\DD}{\mathcal{D}}
\newcommand{\DE}{\mathcal{D}}
\newcommand{\BAE}{\mathcal{A}}
\newcommand{\gpi}{\varPi_{\mathrm{grad}}}
\newcommand{\dpi}{\varPi_{\mathrm{div}}}
\newcommand{\Babuska}{Babu{\v{s}}ka}
\newcommand{\tr}{\mathrm{tr}}
\newcommand{\Hdiv}[1]{H(\mathrm{div},#1)}
\newcommand{\ip}[1]{\langle {#1} \rangle}
\newcommand{\jmpn}[1]{\big\| [#1] \big\|_{\partial\oh}}
\newcommand{\om}{\Omega}
\newcommand{\oh}{{\Omega_h}}
\newcommand{\EE}{E^{(2)}}
\newcommand{\ee}{e^{(2)}}
\newcommand{\xx}{x^{(2)}}
\newcommand{\yy}{y^{(2)}}
\newcommand{\sgn}{\hat{\sigma}_n}
\newcommand{\sgnh}{\hat{\sigma}_{n,h}}
\newcommand{\etah}{\hat{\eta}_{n,h}}
\newcommand{\dive}{\ensuremath{\mathop{\mathrm{div}}}}
\newcommand{\grad}{\ensuremath{\mathop{\mathrm{grad}}}}
\newcommand{\trace}{\ensuremath{\mathop{\mathrm{tr}}}}
\newcommand{\XXX}{\mathbb{X}}
\newcommand{\VVV}{\mathbb{V}}
\newcommand{\RRR}{\mathbb{R}}
\newcommand{\SSS}{\mathbb{S}}
\newcommand{\MMM}{\mathbb{M}}
\newcommand{\KKK}{\mathbb{K}}
\newcommand{\optnn}[1]{\left\|{#1}\right\|_{\mathrm{opt},\VV}}
\newcommand{\optnorm}[2]{\left\|{#1}\right\|_{\mathrm{opt},{#2}}}
\newcommand{\pt}{p_2}
\begin{document}

\title{A locking-free $hp$ DPG method for linear elasticity with symmetric stresses
}

\titlerunning{A DPG method for elasticity}        

\author{J.~Bramwell, L.~Demkowicz, J.~Gopalakrishnan, W.~Qiu$^{*}$
}


\institute{Jamie Bramwell \at
              The Institute for Computational Engineering and Sciences, \\
              the University of Texas at Austin, Austin, Texas 78712\\
              \email{jbramwell@ices.utexas.edu}           
           \and
           Leszek Demkowicz \at
              The Institute for Computational Engineering and Sciences, \\
              the University of Texas at Austin, Austin, Texas 78712\\
              \email{leszek@ices.utexas.edu}
           \and
           Jay Gopalakrishnan \at
              Department of Mathematics and Statistics, \\
              Portland State University, Portland OR 97207-0751\\
              \email{gjay@pdx.edu}   
           \and   
           *Corresponding author: Weifeng Qiu \at
              The Institute for Mathematics and its Applications, \\
              University of Minnesota, Minneapolis, Minnesota 55455\\
              \email{qiuxa001@ima.umn.edu}   
}

\date{Received: date / Accepted: date}

\maketitle

\begin{abstract}
  We present two new methods for linear elasticity that simultaneously
  yield stress and displacement approximations of optimal accuracy in
  both the mesh size $h$ and polynomial degree~$p$.  This is achieved
  within the recently developed discontinuous Petrov-Galerkin (DPG)
  framework. In this framework, both the stress and the displacement
  approximations are discontinuous across element interfaces. We study
  locking-free convergence properties and the interrelationships
  between the two DPG methods.
\keywords{linear elasticity, $hp$ method, stress symmetry,
  discontinuous Galerkin, Petrov-Galerkin, DPG method, ultraweak
  formulation}
\subclass{65N30, 65L12}
\end{abstract}

\section{Introduction}

In this paper we propose a new method for numerically solving the
system of equations describing linear elasticity.  The accurate
computation of stresses is of critical importance in many
applications. Yet, many traditional methods only yield approximations
to the displacement. This means that stress approximations must be
recovered afterward by numerical differentiation.  There are newer
methods, of the mixed and discontinuous Galerkin (DG) category, which
do give direct stress approximations. However, their stability
properties, as a function of both $h$ (the mesh size) and $p$ (the
polynomial degree of solution approximations) are presently
unknown. In this contribution, we bring to the table, a method of the
novel discontinuous Petrov-Galerkin (DPG) type, which exhibits
stability independent of $h$ and~$p$. The new method is the first
$hp$-optimal method for linear elasticity that can simultaneously
approximate the stress and the displacement. We are also able to show,
theoretically and practically, that the convergence of the discrete
solution does not deteriorate as the Poisson ratio approaches $0.5$,
i.e., the method does not lock.

To understand how the new DPG method sidesteps the traditional
difficulties, let us review the usual difficulties in designing
schemes that give direct stress approximations. Mixed
methods~\cite{ArnolAwanoWinth08,ArnolWinth02} based on the
Hellinger-Reissner variational principle face the problem of designing
an approximation space for stresses consisting of matrix functions
that are pointwise symmetric. While this is not difficult in itself,
when combined with two other additional requirements, difficulties
arise.  The first requirement is that forces on a mesh face shared by
two mesh elements must be in equilibrium.  The second requirement is
that the conforming stress space, together with a space for
displacement approximations, form a stable pair.  To put this in
mathematical terms, let $\MMM$ denote the space of real $N\times N$
matrices and let $\SSS$ denote its subspace of symmetric matrices.
The above mentioned stress properties imply that the exact stress
$\sigma$ on an elastic body occupying $\om \subseteq \RRR^N$ lies in
the space
\begin{equation}
  \label{eq:18}
\Hdiv{\om;\SSS} = \{ \sigma \in L^2(\om;\SSS): \dive \sigma \in
L^2(\om,\RRR^N)\}.  
\end{equation}
(Here, the set of functions from $\Omega$ into $\XXX$ whose components
are square integrable on $\om$ is denoted by $L^2(\Omega,\XXX)$, for
$\XXX=\SSS,\MMM,\RRR^N$ etc.) Mixed methods must use conforming finite
element subspaces of $\Hdiv{\om;\SSS}$.  Although such spaces are
known~\cite{ArnolAwanoWinth08}, they have too many unknowns (e.g.,
their lowest order space has 162 degrees of freedom on a single
tetrahedral element). Such rich spaces seem to be necessary to satisfy
both the first (conformity) and the second (discrete stability)
requirement. In contrast, our new DPG methods change the game by
separating the approximation and the stability issues.

The DPG method uses a {\em weaker} variational formulation for the
same problem. In this formulation, $\sigma$ is sought in the space
$L^2(\om, \MMM)$, in contrast to the space $\Hdiv{\om;\SSS}$ above.
Since $L^2(\om, \MMM)$, has no interelement continuity constraints, we
are able to design an approximating finite element subspace trivially.
Furthermore, due to the nonstandard stabilization mechanism of the DPG
scheme, the discrete stress space can be chosen to be a subspace of
$L^2(\om, \SSS)$, i.e., the method gives stresses that are exactly
(point-wise) symmetric. One can equally well choose stress
approximations in a subspace of $\Hdiv{\om;\SSS}$, disregarding
discrete stability considerations.  The stability of the DPG method is
inherited from the well-posedness of the new ultraweak formulation.
Of course, it is by no means trivial to prove this well-posedness (and
most of the analysis in this paper is devoted to it). It is provable
by adopting a Petrov-Galerkin framework where trial and test spaces
are different. For any given trial space, we can locally obtain a test
space that is guaranteed to yield stability.

Test spaces that guarantee stability can be obtained by following the
DPG methodology introduced
in~\cite{DemkoGopal:2010:DPG1,DemkoGopal:2010:DPG2}.  Our initial idea
was as follows: If one uses DG spaces, then given any test space norm,
one can {\em locally} construct test spaces that yield solutions that
are the best approximations in a dual norm on the trial space. This
dual norm was called the ``energy norm''. However, we
realized~\cite{DemkoGopalNiemi:2010:DPG3} that these energy norms are
often complicated to work with once we move beyond one-dimensional
problems. But we turned the tables
in~\cite{DemkoGopal:DPGanl,ZitelMugaDemko:2010:DPG4}, by showing that
given a desirable norm in which one wants the DPG solutions to
converge, there is a way to calculate the matching test space norm. We
refer to this norm as the ``optimal norm'' on the test space (see
\S~\ref{ssec:optimal}).  The catch is that the optimal norm is
nonlocal. Its use would make the computation of a basis for the test
space too expensive. Hence, we have been in pursuit of norm
equivalences.  If one uses, in place of the optimal norm, an
equivalent, but localizable test norm, then the DPG method, instead of
delivering the very best approximation, delivers a quasioptimal
approximation, i.e., the discretization error is bounded by a scalar
multiple of the best approximation error. This approach was applied to
a one-dimensional wave propagation problem
in~\cite{ZitelMugaDemko:2010:DPG4}. A number of further theoretical
tools were needed to develop an error analysis for multidimensional
problems. These, in the context of the simple Poisson equation, appear
in~\cite{DemkoGopal:DPGanl}. In this paper, we further generalize
these tools to the case of the elasticity problem and introduce new
tools to prove locking-free estimates.

Before we proceed to the details of this DPG method, let us mention
several alternative solutions to handle the difficulty of constructing
approximating subspaces of~\eqref{eq:18}. One approach is to relax the
symmetry constraint of stresses by using a Lagrange multiplier. This
means that $\sigma$ is sought in
\[
\Hdiv{\om;\MMM} = \{ \sigma \in L^2(\om;\MMM): \dive \sigma \in
L^2(\om,\RRR^N)\}  
\]
(cf.~\eqref{eq:18}), a space for which finite elements are easier to design. 
This avenue gave rise to mixed methods with weakly imposed symmetry~\cite{ArnolFalkWinth07,CockbGopalGuzma10,GopalGuzma10b,QiuDemko09,QiuDemko11,Stenb88}. 
Yet another avenue is to
keep the stress symmetry, but relax the
$H(\mathrm{div})$-conformity. This yielded non-conforming methods,
e.g.,~\cite{ArnoldWintherNC,GopalGuzma10a,ManHuShi09}.  However, none
of these methods have been proved to be $hp$-optimal. The closest
attempt to an $hp$-method is~\cite{QiuDemko11} which studies a
variable degree mixed method, but does not show how the error
estimates depend on~$p$.  In contrast, we will prove that the (two)
DPG methods we present in this paper are $hp$-optimal.  Furthermore,
since the DPG method can be reinterpreted as a least squares method in
a nonstandard inner product, it yields matrix systems that are
symmetric and positive definite (despite having both stress and
displacement as unknowns).

As mentioned above, there are two new DPG formulations in this
paper. The first is easy to derive and is a natural extension of our
work on the Poisson equation in~\cite{DemkoGopal:DPGanl}. The second
differs from the first due to the presence of a scaled Lagrange
multiplier. The multiplier serves to obtain the extra stability
required to prove the locking-free convergence estimates.

In the next section, we introduce the first DPG method. We also state
the main convergence result for the first method. The second method
and its convergence result is presented in
Section~\ref{sec:second}. Then, in Section~\ref{sec:relation}, we
study the relationship between these two methods, discovering when
they are equivalent. The proofs of the above mentioned two convergence
theorems appear in Section~\ref{sec:anl}. As corollaries to these
convergence theorems, we obtain $h$ and $p$ convergence rates in
Section~\ref{sec:rates}. In Appendix~\ref{sec:weakly-symm}, we present
a result on a mixed method that we crucially use in our proofs.

\section{The first DPG method}         \label{sec:first}

In this section, we present the derivation of the first of our two DPG
methods for linear elasticity. We also state a convergence theorem,
which will be proved in a later section.

Linear elasticity is described by two equations. The first is the
constitutive equation
\begin{subequations}
\label{eq:BVP}
  \begin{align}
    \label{eq:BVP1}
    A\sigma    & = \veps(u)\\
\intertext{and the second is the equilibrium equation}
    \label{eq:BVP2}
  \dive \sigma & = f. 
  \end{align}
  These equations are imposed on a domain $\om\subseteq \RRR^N$ and
  the space dimension $N$ equals $2$ or $3$.  We assume that $\om$ is
  a bounded open subset of $\RRR^N$ with connected
  Lipschitz boundary.  The stress $\sigma(x)$ is a function taking
  values in $\SSS$
  and its divergence ($\dive \sigma$) is taken row-wise.  The strain tensor is
  denoted by $ \veps(u)= (\grad u + (\grad u)')/2 = \sym\grad u$ where the prime
  ($'$) denotes matrix transpose, and $\sym M = (M+M')/2$.  The material properties are
  incorporated through the compliance tensor $A(x)$ in~\eqref{eq:BVP1}
  which at each $x\in \om$, is a fourth order tensor mapping $\SSS$
  into $\SSS$. 
  The vector function $u : \om \mapsto \RRR^N$ denotes
  the displacement field engendered by the body force $f:\om \mapsto
  \RRR^N$. We consider the simple boundary condition
\begin{equation}
  \label{eq:BVP3}
  u = 0  \qquad \text{ on } \partial\om
\end{equation}
\end{subequations}
which signifies that the elastic body is clamped on the boundary
$\partial\om$. (We will remark on extending the method to other boundary
condition later -- see Remark~\ref{rem:bc}.)

To motivate the derivation of the first scheme, we multiply the
equations of~\eqref{eq:BVP} by test functions $\tau:\om \mapsto
\mathbb{S}$ and $v:\om\mapsto \RRR^N$, supported on a domain $K$. We
temporarily assume $\tau$ and $v$ to be smooth so we can integrate by
parts to get
\begin{subequations}
  \label{eq:intg}
\begin{align}
  (A\sigma,\tau)_K
  +
  (u,\dive\tau)_K
    -\langle u,\tau\,n\rangle_{1/2,\partial K}
  & =  0,
  \\
  (\sigma,\nabla v)_K
  -\langle v,\sigma\,n \rangle_{1/2,\partial K}
  & = (f,v)_K.
\end{align}
\end{subequations}
Here $n$ denotes the outward unit normal on the boundary of the domain
under consideration, $(\cdot,\cdot)_K$ denotes the integral over $K$
of an appropriate inner product (Frobenius, dot product, or scalar
multiplication) of its arguments, and
$\langle\cdot,l\rangle_{1/2,\partial K}$ denotes the action of a
functional $l\in H^{-1/2}(\partial K)$. Here and throughout, we use
standard Sobolev spaces without explanation, e.g., $H^1(\Omega,\RRR^N)
= \{ v \in L^2(\Omega,\RRR^N): \grad v \in L^2(\Omega, \MMM)\}$.

Now, assume that the domain $\om$ where the boundary value
problem~\eqref{eq:BVP} is posed, admits a disjoint partitioning into
open ``elements'' $K$, i.e., $\cup \{\bar K: \; K \in \om_h \} = \bar
\om.$ We need not assume that elements are of any particular shape,
only that the mesh elements $K\in \oh$ have Lipschitz and {\em
  piecewise planar} boundaries, i.e., in two space dimensions, $K$ is
a Lipschitz polygon, and in three space dimensions, $K$ is a Lipschitz
polyhedron. We now sum up the equations of~\eqref{eq:intg} element by
element to obtain
\begin{subequations}
  \label{eq:2}
\begin{align}
  (A\sigma,\tau)_\oh
  +
  (u,\dive\tau)_{\Omega_{h}}
 -\langle u,\tau\,n\rangle_{\partial\Omega_{h}} 
  & =  0
  \\
  (\sigma,\nabla v)_{\Omega_{h}}
  -\langle v,\sigma\,n 
  \rangle_{\partial\Omega_{h}}
  & = (f,v)_\oh
  \\
  (\sigma,q)_{\Omega_{h}}
  & = 0. 
\end{align}
\end{subequations}
Here, we have additionally imposed the symmetry of the stress tensor
by the last equation (where $q$ is a skew-symmetric matrix valued test
function on $\Omega$) and used the following notations:
\begin{equation}
\label{inner_products}
(r,s)_{\Omega_{h}}=\sum_{K\in\Omega_{h}}(r,s)_{K},\quad 
\langle w,l\rangle_{\partial\oh}=\sum_{K\in\Omega_{h}}
\langle w,l\rangle_{1/2,\partial K}.
\end{equation}
The notation $\partial\oh$ is used for the collection $\{ \partial K: \; K \in
\oh\}$.  Note that it will be clear from the context if differential
operators are calculated element by element or globally, e.g., $\dive$
in~\eqref{eq:2} is calculated piecewise, while in~\eqref{eq:18} it is
the global.


The equations of~\eqref{eq:2} motivate the following rigorous functional
framework for an ultraweak variational formulation.  We let the traces
of $u$ and $\sigma$ in the terms $\langle
u,\tau\,n\rangle_{\partial\Omega_{h}}$ and $\langle v,\sigma\,n
\rangle_{\partial\Omega_{h}}$ to be new unknowns, which we call the
{\em numerical trace} and the {\em numerical flux}, resp. The
ultraweak variational formulation seeks
$(\sigma,u,\hat{u},\hat{\sigma}_{n})$ in the {\em trial space}
\begin{subequations}  \label{eq:DPG1}
\begin{align}
 \label{trial_space}
U & = L^{2}(\Omega;\mathbb{M})\times L^{2}(\Omega;\mathbb{V})
\times H_{0}^{1/2}(\partial\Omega_{h};\mathbb{V})\times H^{-1/2}(\partial\Omega_{h};\mathbb{V}),
\end{align}
satisfying 
 \begin{equation}
    \label{bilinear_dpg}
    b\big(\,(\sigma,u,\hat{u},\hat{\sigma}_{n}),\; 
    (\tau,v,q)\,\big)
    =l(\tau,v,q)\qquad\quad \forall\, (\tau,v,q)\in V, 
  \end{equation}
  where the {\em test space} $V$ is defined by 
\begin{align}
\label{test_space}
V & = H(\text{div},\Omega_{h};\mathbb{S})\times H^{1}(\Omega_{h};\mathbb{V})\times 
L^{2}(\Omega_{h};\mathbb{K}),
\end{align}
where $\KKK$ denotes the subspace of $\MMM$ consisting of all
skew-symmetric matrices, and the bilinear form $b(\cdot,\cdot)$ and
the linear form $l(\cdot)$ are motivated by~\eqref{eq:2}. Namely,
 \begin{align}
    \label{binear_form_concrete}
    b((\sigma,u,\hat{u},\hat{\sigma}_{n}),(\tau,v,q))
    & = 
    (A\sigma,\tau)_{\Omega_{h}}+(u,\dive\tau)_{\Omega_{h}}
    -\langle\hat{u},\tau\,n\rangle_{\partial\Omega_{h}} 
    \\\nonumber
    & \qquad +(\sigma,\nabla v)_{\Omega_{h}}-\langle v,\hat{\sigma}_{n}\rangle_{\partial\Omega_{h}}
    +(\sigma,q)_{\Omega_{h}}, \\\nonumber
    l(\tau,v,q) & = (f,v).
  \end{align}
\end{subequations}
In the notations of~\eqref{trial_space} and~\eqref{test_space}, 
 $\VVV$ denotes the vector space $\RRR^N$ and 
\begin{align*}
H_{0}^{1/2}(\partial\Omega_{h};\mathbb{V}) & =\lbrace\eta :\exists w\in H_{0}^{1}(\Omega;\mathbb{V})\text{ with }
\eta|_{\partial K}=w|_{\partial K},\forall K\in\Omega_{h}\rbrace,\\
H^{-1/2}(\partial\Omega_{h};\mathbb{V}) & =\lbrace\eta\in\smash[b]{\mathop{\Pi}_{K\in\oh}}H^{-1/2}(\partial K;\mathbb{V}):\exists
q\in H(\text{div},\Omega;\mathbb{M})\text{ with }
&
\\
&\qquad\qquad\qquad\qquad\eta|_{\partial K}=q\,n|_{\partial K},\forall K\in\Omega_{h}\rbrace,\\
H(\text{div},\Omega_{h};\mathbb{S}) & =\lbrace\tau:\tau|_{K}\in H(\text{div},K;\mathbb{S}),\forall K\in\Omega_{h}\rbrace,\\
H^{1}(\Omega_{h};\mathbb{V}) & =\lbrace v:v|_{K}\in H^{1}(K;\mathbb{V}),\forall K\in\Omega_{h}\rbrace.
\end{align*}
The norms on $H_{0}^{1/2}(\partial\Omega_{h};\mathbb{V})$ and $H^{-1/2}(\partial\Omega_{h};\mathbb{V})$ 
are defined by 
\begin{align}
\label{trace_norm}
\Vert\hat{u}\Vert_{H_{0}^{1/2}(\partial\Omega_{h})} = \inf\lbrace\Vert w\Vert_{H^{1}(\Omega)}:\forall
w\in H_{0}^{1}(\Omega;\mathbb{V}) \text{ with }\hat{u}-w|_{\partial K}=0 \rbrace,\\
\label{flux_norm}
\Vert\hat{\sigma}_{n}\Vert_{H^{-1/2}(\partial\Omega_{h})} = \inf\lbrace\Vert q\Vert_{H(\text{div},\Omega)}:
\forall q\in H(\text{div},\Omega;\mathbb{M})\text{ with }\hat{\sigma}_{n}-q\,n|_{\partial K}=0 \rbrace.
\end{align}
The  trial and test norms are defined by
\begin{subequations}
\begin{align}
\label{trial_norm}
\Vert(\sigma,u,\hat{u},\hat{\sigma}_{n})\Vert_{U}^{2} & =\Vert\sigma\Vert_{\Omega}^{2}+\Vert u\Vert_{\Omega}^{2}
+\Vert\hat{u}\Vert_{H_{0}^{1/2}(\partial\Omega_{h})}^{2}
+\Vert\hat{\sigma}_{n}\Vert_{H^{-1/2}(\partial\Omega_{h})}^{2},\\
\label{test_norm}
\Vert(\tau,v,q)\Vert_{V}^{2} & =\Vert\tau\Vert_{H(\text{div},\Omega_{h})}^{2}+
\Vert v\Vert_{H^{1}(\Omega_{h})}^{2}+\Vert q\Vert_{\Omega}^{2}.
\end{align}
\end{subequations}
Here the norms on $H^{1}(\Omega_{h};\mathbb{V})$ and
$H(\text{div},\Omega_{h};\mathbb{S})$ are defined by
\begin{align}
\label{broken_norm1}
\Vert v\Vert_{H^{1}(\Omega_{h})}^{2}=(v,v)_{\Omega_{h}}+(\grad v,\grad v)_{\Omega_{h}},\\
\label{broken_norm2}
\Vert \tau\Vert_{H(\text{div},\Omega_{h})}^{2}=(\tau,\tau)_{\Omega_{h}}+(\dive\tau,\dive\tau)_{\Omega_{h}}.
\end{align}
This completes the description of our new infinite dimensional DPG
variational formulation.

\begin{remark} \label{rem:ext1} 
  Like in many other numerical formulations, in the DPG
  formulation~\eqref{eq:DPG1}, we need to the extend the domain of the
  compliance tensor $A(x)$ from $\SSS$ to $\MMM$. There are many ways
  to perform this extension.  To choose one, 
  decompose $\MMM$
  orthogonally (in the Frobenius inner product) into $\KKK$ and
  $\SSS$.  A standard way to extend $A(x)$ from $\SSS$ to $\MMM$ is to
  define $A(x) \kappa = \kappa$ for all $\kappa$ in $\KKK$. Then,
  whenever the original $A(x)$ is self-adjoint and positive definite
  on $\SSS$, the extended $A(x)$ is also self-adjoint and positive
  definite on $\MMM$.  
\end{remark}

We assume throughout that $A(x)$ (i.e., its above mentioned extension) is
self-adjoint and positive definite uniformly on $\om$. We
also assume that the components of (the extended) $A$ are in
$L^\infty(\om)$.

Next, we describe the discrete DPG scheme. This is done following
verbatim the abstract setup in~\cite[Section~2]{DemkoGopal:DPGanl}
(see also~\cite{DemkoGopal:2010:DPG2}). Accordingly, we define the
{\em trial-to-test} operator
$T: U \mapsto V$ by
\begin{equation}
  \label{eq:T}
  (T \Uc, \Vc)_V = b(\Uc,\Vc),\qquad \forall  \,\Vc \in V
  \text{ and } \forall\,\Uc \in U.
\end{equation}
We select {\em any} finite dimensional subspace $U_h \subseteq U$ and
set the corresponding finite  dimensional test space by 
\[
V_h = T (U_h).
\]
Then the DPG approximation $(\sigma_h,u_h,\hat u_h,\sgnh) \in U_h$ satisfies 
\begin{equation}
  \label{eq:dpgapprox}
  b\big(\,(\sigma_h,u_h,\hat u_h,\sgnh) ,\; 
  (\tau,v,q)\,\big)
    =l(\tau,v,q)\qquad\quad \forall\, (\tau,v,q)\in V_h.
\end{equation}
The distance between this approximation and the exact solution can be
bounded as stated in the next theorem.

\begin{theorem}[Quasioptimality]
  \label{thm:dpg1}
  Let $U_h \subseteq U$. Then,~\eqref{bilinear_dpg} has a unique
  solution $(\sigma,u,\hat u,\sgn) \in U$ and~\eqref{eq:dpgapprox} has
  a unique solution $(\sigma_h,u_h,\hat u_h,\sgnh) \in U_h$.
  Moreover, there is a $C^{(1)}>0$ independent of the subspace $U_h$
  and the partition $\oh$ such that
  \[
  \DE \;\le\; C^{(1)}\; \BAE,
  \]
  where $\DE$ is the discretization error and $\BAE$ is the error in
  best approximation by $U_h$, defined by
  \begin{align*}
    \DE
    & =
    \| \sigma - \sigma_h \|_{L^2(\om)} 
    +
    \| u - u_h \|_{L^2(\om)}
    +
    \| \hat u - \hat u_h \|_{H_0^{1/2}(\partial\oh)}
    +
    \| \sgn - \sgnh \|_{H^{-1/2}(\partial\oh)}.
    \\
    \BAE
    & = \!\!\! \displaystyle{\inf_{ (\rho_h, w_h, \hat z_h, \etah) \in U_h }}
    \\
    & \bigg(    
    \| \sigma - \rho_h \|_{L^2(\om)} 
     + 
    \| u - w_h \|_{L^2(\om)}  
     + 
     \| \hat u - \hat z_h \|_{H_0^{1/2}(\partial\oh)}
     +  
    \| \sgn - \etah \|_{H^{-1/2}(\partial\oh)} 
    \bigg).
\end{align*}
\end{theorem}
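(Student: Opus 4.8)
The plan is to show that the bilinear form $b$ on $U\times V$ fits the abstract DPG framework of \cite[Section~2]{DemkoGopal:DPGanl}, which reduces the entire statement to three ingredients: (i) boundedness, $|b(\Uc,\Vc)|\le M\|\Uc\|_U\|\Vc\|_V$; (ii) a trial-space inf-sup condition $\sup_{\Vc\ne0}b(\Uc,\Vc)/\|\Vc\|_V\ge\gamma\|\Uc\|_U$ with $\gamma>0$ \emph{independent of} $\oh$; and (iii) non-degeneracy of the test space, i.e.\ $b(\Uc,\Vc)=0$ for all $\Uc\in U$ implies $\Vc=0$. Granting these, the cited abstract theory applies verbatim: (i) makes the trial-to-test operator $T$ bounded and (ii) makes it injective on every $U_h$ (so $\dim V_h=\dim U_h$ and \eqref{eq:dpgapprox} is uniquely solvable), the energy norm $\|\Uc\|_E:=\sup_{\Vc\ne0}b(\Uc,\Vc)/\|\Vc\|_V$ is then a genuine norm on $U$ in which the DPG solution is the best approximation from $U_h$, and (ii)--(iii) give solvability of \eqref{bilinear_dpg}. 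Since (i) and (ii) yield $\gamma\|\Uc\|_U\le\|\Uc\|_E\le M\|\Uc\|_U$, the quasioptimality estimate follows with $C^{(1)}=M/\gamma$, which depends only on $A$ and not on $U_h$ or $\oh$.

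I would dispatch (i) first, as it is routine: the volume terms of $b$ are handled by Cauchy--Schwarz together with $\|A\|_{L^\infty(\om)}$, and for the two duality pairings one chooses liftings $w\in H_0^1(\om;\VVV)$ of $\hat u$ and $\rho\in\Hdiv{\om;\MMM}$ of $\hat\sigma_n$ realizing the infima in \eqref{trace_norm}--\eqref{flux_norm} up to arbitrarily small error, integrates by parts on each $K\in\oh$ to rewrite $\langle\hat u,\tau n\rangle_{\partial\oh}=(w,\dive\tau)_\oh+(\veps(w),\tau)_\oh$ and $\langle v,\hat\sigma_n\rangle_{\partial\oh}=(\dive\rho,v)_\oh+(\rho,\grad v)_\oh$, and estimates termwise; this gives $M$ absolute. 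Ingredient (iii) I would verify by the usual integration-by-parts argument: testing $b(\Uc,\Vc)=0$ against the field slots of $\Uc$ forces $A\tau+\grad v+q=0$ a.e.\ and $\dive\tau=0$ on each element, so $A\tau=-\veps(v)$ and $q=-\skw\grad v$; testing against $\hat u$ and $\hat\sigma_n$ (whose liftings need not vanish on $\partial\om$) forces $\tau\in\Hdiv{\om;\SSS}$ with globally vanishing divergence and $v\in H_0^1(\om;\VVV)$; then $\tau=-A^{-1}\veps(v)$, $\dive\tau=0$, $v$ clamped, so $v$ solves homogeneous linear elasticity and vanishes, whence $\tau=q=0$.

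The substantive part is (ii), the mesh-independent inf-sup. Given $\Uc=(\sigma,u,\hat u,\hat\sigma_n)$, I would again introduce near-minimizing liftings $w,\rho$, so that $\|\Uc\|_U$ is equivalent to $\|\sigma\|_\om+\|u\|_\om+\|w\|_{H^1(\om)}+\|\rho\|_{\Hdiv{\om}}$, and re-express $b(\Uc,\Vc)$ by the same element-wise integration by parts. It then suffices to construct a single $\Vc=(\tau,v,q)\in V$ — or a short sum of such — with $b(\Uc,\Vc)\gtrsim\|\Uc\|_U^2$ and $\|\Vc\|_V\lesssim\|\Uc\|_U$. The field part of $\Vc$ I would obtain by solving an auxiliary clamped linear elasticity system whose $L^2$ data are assembled from $\sigma,u,w,\rho$; solvability together with the natural a priori estimate yields a pair $(\tau_0,v_0)\in\Hdiv{\om;\SSS}\times H_0^1(\om;\VVV)$, and it is exactly here that the stability of the weakly-symmetric mixed method established in Appendix~\ref{sec:weakly-symm} enters, since it supplies such a pair with a constant independent of $\oh$ (and, as will be needed later, robust as the Poisson ratio tends to $1/2$). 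The symmetry slot $(\sigma,q)_\oh$ is then carried by adjoining a multiple of $(0,0,\skw\sigma)$, and any residual trace contributions by adjoining test functions built directly from $w$ and $\rho$; adding the pieces and tuning weights gives the two required inequalities.

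The hard part is exactly this last step, in two respects: organizing the several auxiliary test functions so that all four trial components $\sigma,u,\hat u,\hat\sigma_n$ are controlled simultaneously, and — the crux — ensuring $\gamma$ does not deteriorate under mesh refinement. The latter is why the construction is routed entirely through liftings and an elasticity solve posed on the \emph{whole} of $\om$, never element by element, and why the appendix's mixed-method stability estimate, with its $\oh$-independent (and Lam\'e-robust) constant, is needed as the engine of the proof rather than a convenience.
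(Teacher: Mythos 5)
Your abstract skeleton is sound: conditions (i)--(iii) are the transposed version of the hypotheses~\eqref{eq:inj}--\eqref{eq:equiv} that the paper feeds into Theorem~\ref{thm:abs}, and they do yield unique solvability of both problems and quasioptimality with $C^{(1)}=M/\gamma$ independent of $U_h$ and $\oh$. Your (i) and (iii) are also fine (indeed (iii), the test-side uniqueness, is the \emph{easy} adjoint statement; the paper instead proves the harder trial-side uniqueness, cf.\ Lemma~\ref{lemma_uniqueness2}). But by transposing you have moved all the quantitative difficulty into (ii), the mesh-independent trial-side inf-sup, and your sketch for it has a genuine hole. The engine you invoke -- the mixed-method stability of Appendix~\ref{sec:weakly-symm} (or, for this theorem, the standard weakly symmetric mixed method of Arnold--Falk--Winther, which is what the paper actually uses for the first method) -- produces a \emph{conforming} pair: a stress in $\Hdiv{\om;\MMM}$ (symmetric only if you set the skew datum to zero, which you do not say) and a displacement in $H_0^1(\om;\VVV)$. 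Such a conforming test pair is completely blind to the interface unknowns: for $\tau\in\Hdiv{\om;\SSS}$ and $\hat u\in H_0^{1/2}(\partial\oh;\VVV)$ one has $\langle\hat u,\tau\,n\rangle_{\partial\oh}=0$, and likewise $\langle v,\sgn\rangle_{\partial\oh}=0$ for $v\in H_0^1(\om;\VVV)$. So your global elasticity solve can at best control $\|\sigma\|_{\om}+\|u\|_{\om}$; it contributes nothing toward $\|\hat u\|_{H_0^{1/2}(\partial\oh)}$ and $\|\sgn\|_{H^{-1/2}(\partial\oh)}$.

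To see those two components you must use genuinely broken test functions, and then (taking $\Uc=(0,0,\hat u,0)$, say) your condition (ii) \emph{is} the statement that the quotient norms \eqref{trace_norm}--\eqref{flux_norm} are bounded by the duality pairings against broken $H(\mathrm{div},\oh;\SSS)$ and $H^1(\oh;\VVV)$ functions, with constants independent of $\oh$ (and, for $\hat u$, with the symmetry constraint on $\tau$). These localization/duality results are true but are substantive lemmas in their own right; they do not follow from ``adjoining test functions built directly from $w$ and $\rho$'': a test function assembled element-wise from the lifting $w$ gives $-\langle\hat u,\tau n\rangle_{\partial\oh}=\sum_K\|\tau\|^2_{H(\mathrm{div},K)}$-type identities whose lower bound by $\|w\|^2_{H^1(\om)}$ is exactly the unproven duality, not a consequence of Cauchy--Schwarz and weight-tuning. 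This is precisely the step the paper's choice of direction avoids: it proves the \emph{test-side} inf-sup (the lower bound in~\eqref{eq:equiv}), where, given $(\tau,v,q)$, the mixed solution is used as a \emph{trial} function -- legitimately nonsymmetric, since the trial stress space is $L^2(\om;\MMM)$ -- and after element-wise integration by parts the jump pairings appear automatically, bounded by $\|u\|_{H^1(\om)}$ and $\|\sigma\|_{\Hdiv\om}$ of the mixed solution, which the stability estimate~\eqref{bound1_lower_bound} controls. As written, your proof of (ii) therefore does not go through; either supply and prove the two duality lemmas for the interface norms (with symmetric broken stresses for the $\hat u$ pairing), or reverse the direction as the paper does.
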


This result is comparable to Cea's lemma in traditional finite element
theory. Of importance is the independence of $C^{(1)}$ with respect to
$U_h$. Specifically, we are interested in setting $U_h$ to $hp$-finite
element subspaces with extreme variations in $h$ and $p$ to capture
singularities or thin layers in solutions. In this case, the constant
$C^{(1)}$, being independent of $U_h$, is independent of {\em both}
the mesh size $h$ and the polynomial degree~$p$. As such, this forms
the first method for linear elasticity with provably $hp$-optimal
convergence rates of the same order for $\sigma$ and $u$.  Although
several mixed methods yielding good approximations to $\sigma$ are
known, proving their $hp$-optimality requires proving an inf-sup
condition carefully tracking the dependence of constants on~$p$, a
feat yet to be achieved. For a proof of Theorem~\ref{thm:dpg1}, see
Section~\ref{sec:anl}.

\section{The second DPG method}       \label{sec:second}

The robustness of numerical methods with respect to the Poisson ratio
is an important consideration in computational mechanics. Methods that
are not robust exhibit {\em locking}.  Note that we did not assert in
Theorem~\ref{thm:dpg1} that the constant $C^{(1)}$ is independent of
the Poisson ratio. However, in all our numerical experiments (see
Section~\ref{sec:numerical}), the method showed locking-free
convergence. This section serves as a first step towards explaining
this locking-free behavior theoretically.

The second DPG method given below is designed so that we can establish
locking-free convergence with respect to the Poisson ratio.  It has
one more trial variable. In this section, we will assert its
locking-free convergence properties, restricting ourselves to
isotropic materials. In the next section, we will provide a sufficient
condition under which the first and the second DPG methods are
equivalent. This gives theoretical insight into the locking-free
behavior of {\em both} the first and the second methods.

Let us begin by defining the essential infimum
\begin{equation}
\label{Q0_def}
Q_{0} = \essinf_{x\in\Omega} \big( \trace (A(x)I)\, \big).
\end{equation}
Obviously, $Q_{0}>0$ for an isotropic material with Poisson ratio
$\nu<0.5$.  The second method is motivated by the same integration by
parts as in~\eqref{eq:2}, but with the following additional
observation in mind: If we set $\tau=I$ in (\ref{eq:2}) and recall
that $u=0|_{\partial\Omega}$, then we have that
\begin{equation}
\label{zero_trace_eq}
\int_{\Omega}\trace A\sigma=0.
\end{equation}
Imposing this condition via a Lagrange multiplier, we obtain another
ultraweak formulation with the following bilinear and linear forms:
\begin{subequations}
  \label{eq:dpg2}
  \begin{align}
    \nonumber
  \bb((\sigma,u,\hat{u},\hat{\sigma}_{n},\alpha),(\tau,v,q,\beta))
  & = 
  (A\sigma,\tau)_{\Omega_{h}}+(u,\dive\tau)_{\Omega_{h}}
  -\langle\hat{u},\tau\,n\rangle_{\partial\Omega_{h}}+Q_{0}^{-1}(\alpha I,A\tau)_{\Omega_{h}} \\\nonumber
  & \qquad +(\sigma,\nabla v)_{\Omega_{h}}+(\sigma,q)_{\Omega_{h}}
  -\langle v,\hat{\sigma}_{n}\rangle_{\partial\Omega_{h}}\\
  \label{eq:6}
  & \qquad +Q_{0}^{-1}(A\sigma,\beta I)_{\Omega_{h}}
  \\ \nonumber
  l^{(2)}(\tau,v,q,\beta) & = (f,v).
\end{align}
Here  $I$ is the identity matrix.
The trial space is now set to 
\begin{equation}
  \label{eq:7}
  \UU = L^{2}(\Omega;\mathbb{M})\times
  L^{2}(\Omega;\mathbb{V}) \times
  H_{0}^{1/2}(\partial\Omega_{h};\mathbb{V}) \times
  H^{-1/2}(\partial\Omega_{h};\mathbb{V})\times \mathbb{R}
\end{equation}
and the test space is set to
\begin{equation}
  \label{eq:9}
  \VV  = H(\text{div},\Omega_{h};\mathbb{S})\times
  H^{1}(\Omega_{h};\mathbb{V})\times L^{2}(\Omega_{h};\mathbb{K})\times \mathbb{R}.
\end{equation}
\end{subequations}
By (\ref{zero_trace_eq}), the solution $(\sigma,u)$ of (\ref{eq:BVP}) 
will be the solution of (\ref{eq:dpg2}) with 
\begin{equation*}
\hat{u}=u|_{\partial\Omega_{h}},\quad \hat{\sigma}_{n}=\sigma n |_{\partial\Omega_{h}},
\quad\text{and}\quad \alpha = 0.
\end{equation*} 
However, more work is needed to conclude similar statements
at the discrete level (see the next section).

\begin{remark} \label{rem:bc}
  In the case of mixed boundary conditions we must add the term
  \begin{equation}
    \label{eq:bcterm}
      -Q_{0}^{-1}\langle \hat{u},(\beta I)n\rangle_{\partial\Omega_{h}}
  \end{equation}
  to the the expression~\eqref{eq:6}. Note that this term vanishes in
  the case of kinematic boundary conditions analyzed in this
  paper. However, for more general boundary conditions, $\hat u$ can
  be be nonzero on the parts of the boundary where traction conditions
  are imposed. Hence~\eqref{eq:bcterm} simplifies to a boundary
  integral that is nonzero in general.
\end{remark}

The second DPG method is obtained by constructing a discrete scheme as
before from the ultraweak formulation
(following~\cite[Section~2]{DemkoGopal:DPGanl}).  The trial-to-test
operator in this case (cf.~\eqref{eq:T}) is $\TT: \UU \mapsto \VV$ by
\begin{equation}
  \label{eq:T}
  (\TT \Uc, \Vc)_{\VV} = \bb(\Uc,\Vc),\qquad \forall  \,\Vc \in \VV.
\end{equation}
Let $U_h^{(2)} \subseteq U^{(2)}$ be any finite dimensional
subspace. We set $\VV_h = \TT (\UU)$. The second DPG approximation
$(\sigma_h,u_h,\hat u_h,\sgnh,\alpha_{h}) \in U_h^{(2)}$ satisfies 
\begin{equation}
  \label{eq:dpgapprx2}
    \bb\big( \, (\sigma_h,u_h,\hat u_h,\sgnh,\alpha_{h}),
    \;(\tau,v,q,\beta)\,\big)
    = l((\tau,v,q,\beta))
    \qquad\forall\, (\tau,v,q,\beta)\in \VV_h.
\end{equation}
As in the case of the first DPG method, we are able to prove a
quasioptimality result (see the next theorem) bounding the discretization error
\begin{align*}
    \DE^{(2)} 
    & =
    \| \sigma - \sigma_h \|_{L^2(\om)} 
    +
    \| u - u_h \|_{L^2(\om)}+\vert\alpha - \alpha_{h}\vert\\
    & \qquad +
    \| \hat u - \hat u_h \|_{H_0^{1/2}(\partial\oh)}
    +
    \| \sgn - \sgnh \|_{H^{-1/2}(\partial\oh)}.
\end{align*}
by the error in best approximation
\begin{equation*}
  \begin{gathered}
    \BAE^{(2)}
     = \!\!\! \displaystyle{\inf_{ (\rho_h, w_h, \hat z_h, \etah,\gamma_{h}) \in U_h }}
    \bigg(    
    \| \sigma - \rho_h \|_{L^2(\om)} 
     + 
    \| u - w_h \|_{L^2(\om)}+\vert \alpha-\gamma_{h}\vert  
    \\
    \qquad\qquad\qquad\qquad\qquad\qquad +\;  
    \| \hat u - \hat z_h \|_{H_0^{1/2}(\partial\oh)}
     +  
    \| \sgn - \etah \|_{H^{-1/2}(\partial\oh)} 
    \bigg).
  \end{gathered}
\end{equation*}
However, we are also able to prove a stronger result under the
following assumption.

\begin{assumption}[Isotropic material]
  \label{asm:iso}
  We assume that
\begin{equation}
\label{PQ_decomp}
A\tau = P\tau_{D}+Q\dfrac{\trace (\tau)}{N}I
\end{equation}
where
\[
\tau_{D}=\tau-\dfrac{\trace (\tau)}{N}I,
\]
for any $\tau$ in $\MMM$, and $P$ and $Q$ are positive scalar
functions on $\Omega$. (Then, obviously $Q\geq Q_{0}$ for the $Q_0$
defined in~\eqref{Q0_def}.) When~\eqref{PQ_decomp} holds, we also
define
\begin{align} 
\label{eq:B}
B &= Q_{0}^{-1}\Vert Q\Vert_{L^{\infty}(\Omega)},
\\
\label{eq:P0}
P_{0}
& = \essinf_{x\in \Omega} P(x).
\end{align}
(Note that~\eqref{PQ_decomp} is assumed to hold for all $\tau\in \MMM$.)
\end{assumption}

\begin{remark}
  \label{rem:extiso}
  When we consider isotropic materials, we do not extend $A$ from
  $\SSS$ to $\MMM$ in the way suggested in
  Remark~\ref{rem:ext1}. Instead, we assume that it is extended from
  $\SSS$ to $\MMM$ by $A \kappa = P \kappa$ for all $\kappa \in
  \KKK$. This ensures that~\eqref{PQ_decomp} holds for all $\tau\in
  \MMM$, not just for all $\tau \in \SSS$.
\end{remark}

\begin{theorem}[Quasioptimality of the second DPG method]
  \label{thm:dpg2}
  Let $U_h^{(2)} \subseteq \UU$. Then,~\eqref{eq:dpg2} has a unique
  solution $(\sigma,u,\hat u,\sgn,\alpha) \in U^{(2)}$
  and~\eqref{eq:dpgapprx2} has a unique solution $(\sigma_h,u_h,\hat
  u_h,\sgnh,\alpha_{h}) \in U_h^{(2)}$. Moreover, there is a
  $C^{(2)}>0$ independent of the subspace $U_h^{(2)}$ and the
  partition $\oh$ such that
  \[
  \DE^{(2)} \;\le\; C^{(2)}\; \BAE^{(2)}.
  \]
  If in addition, Assumption~\ref{asm:iso} holds, then $C^{(2)}$ can be
  chosen to be
  \begin{equation}
  \label{uniform_stability_constant}
  C^{(2)} = \bar{c}P_{0}^{-1}(\Vert A\Vert+B)^{3}B^{4}(\Vert A\Vert+P_{0}+1)^{2},
  \end{equation}
  a constant independent of $Q_0$, and consequently the method does
  not lock.  (Here, the positive constant $\bar{c}$ is independent of
  $A$.)
\end{theorem}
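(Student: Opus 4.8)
The plan is to follow the general DPG mechanism, for which quasioptimality with constant $C^{(2)}$ reduces, by the abstract theory recalled from \cite{DemkoGopal:DPGanl}, to establishing (i) the boundedness of the bilinear form $\bb$ on $\UU\times\VV$, (ii) the non-degeneracy of $\bb$ in the test variable (i.e.\ for each nonzero $\Vc\in\VV$ there is $\Uc\in\UU$ with $\bb(\Uc,\Vc)\neq 0$), and (iii) the inf--sup condition
\[
\inf_{0\neq\Uc\in\UU}\ \sup_{0\neq\Vc\in\VV}\ \frac{\bb(\Uc,\Vc)}{\|\Uc\|_{\UU}\,\|\Vc\|_{\VV}}\ \ge\ \gamma>0,
\]
with $C^{(2)}$ controlled by $\gamma^{-1}$ and the boundedness constant. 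Boundedness is routine from the $L^\infty$ bounds on the entries of $A$ and the definitions of the trace and flux norms, and it is here that the factor $(\|A\|+B)$ and the $B$-powers in \eqref{uniform_stability_constant} enter: the term $Q_0^{-1}(\alpha I,A\tau)_{\oh}$ contributes a factor scaling like $Q_0^{-1}\|Q\|_{L^\infty}=B$. Non-degeneracy follows by testing against smooth fields localized on a single element, exactly as for the first method. So the crux is the inf--sup bound, and the whole point of the extra Lagrange variable $\alpha$ and the rescaling by $Q_0^{-1}$ is to make $\gamma$ independent of $Q_0$ (equivalently of $\nu\to 1/2$).

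For the inf--sup estimate I would argue by duality: it is equivalent to show that the adjoint problem is well posed, i.e.\ given data one can solve for a test function $(\tau,v,q,\beta)$ with the appropriate a priori bound. Concretely, I would start from an arbitrary $\Uc=(\sigma,u,\hat u,\sgn,\alpha)\in\UU$ and construct, piece by piece, a test function $\Vc$ that "sees" every component of $\Uc$. The standard route (as in the Poisson analysis) is: first solve an auxiliary boundary value problem on $\om$ — here a linear elasticity problem with the weakly-imposed-symmetry mixed method of Appendix \ref{sec:weakly-symm} — to produce $H(\dive;\SSS)$ and $H^1$ fields whose broken norms are controlled and which reproduce $(\sigma,u)$ in the volume pairings, with the normal trace and Dirichlet trace matching $\sgn$ and $\hat u$ up to terms absorbed by the trace/flux norms; then add a correction in $L^2(\oh;\KKK)$ to capture the skew part; and finally choose $\beta\in\RRR$ proportional to $\alpha$ so that the term $Q_0^{-1}(\alpha I,A\tau)_{\oh}+Q_0^{-1}(A\sigma,\beta I)_{\oh}$ together with the constraint \eqref{zero_trace_eq} structure yields a coercive $|\alpha|^2$ contribution. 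The key algebraic identity is that for $\tau=I$ one has $A\tau = Q\,I/N\cdot N =$ (trace) $\,\cdot$ something of size $\sim Q_0$, so the rescaled pairing $Q_0^{-1}(\alpha I,AI)_{\oh}$ is bounded \emph{below} by a $Q_0$-independent multiple of $|\alpha|^2$ while the cross terms coupling $\alpha$ to $\sigma$ are controlled using $B$ and the Cauchy–Schwarz inequality with a Young's-inequality split. Tracking these splittings is what produces the precise combination $P_0^{-1}(\|A\|+B)^3B^4(\|A\|+P_0+1)^2$; under Assumption \ref{asm:iso} the decomposition \eqref{PQ_decomp} lets one handle the deviatoric and spherical parts of every matrix field separately, the deviatoric part being controlled by $P_0$ (uniformly in $\nu$) and the spherical part being precisely where the Lagrange multiplier does its work.

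The main obstacle — and the part that genuinely requires the new tools advertised in the introduction — is the $Q_0$-uniform solvability of that auxiliary elasticity problem. The naive a priori estimate for linear elasticity degenerates as $\nu\to 1/2$ because the bound on $\|\dive\sigma\|$ or on $\|\sigma\|$ in terms of the data blows up like $Q_0^{-1}$ through the compliance tensor. The remedy is to split the stress as $\sigma=\sigma_D+\frac{\trace\sigma}{N}I$ and estimate the two parts in different norms: $\sigma_D$ via the (uniformly stable) deviatoric part of $A$ and a Korn-type inequality, and $\trace\sigma$ via the pressure-like variable, exploiting that the troublesome $Q$-weighted term has been divided by $Q_0$ in the formulation — so it appears with a \emph{bounded} coefficient $B$ rather than an unbounded one. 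Establishing this stability for the mixed method with weakly imposed symmetry, with constants depending only on $\|A\|$, $P_0$, $B$ and the domain (not on $Q_0$), is exactly the content of Appendix \ref{sec:weakly-symm}, and invoking it is the decisive step; everything else is bookkeeping of the constants to arrive at \eqref{uniform_stability_constant}.
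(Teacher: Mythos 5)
Your abstract skeleton is acceptable: boundedness plus a trial-side inf--sup (with test-side non-degeneracy for existence) is, by standard duality, the same package as the conditions \eqref{eq:inj}--\eqref{eq:equiv} that the paper verifies, and you correctly identify the augmented mixed problem of Appendix~\ref{sec:weakly-symm} as the source of the $Q_0$-uniformity. The genuine gap is in the central inf--sup step. You transpose the direction -- given a trial function you want to manufacture a test function -- and you propose to do so by solving the Appendix~\ref{sec:weakly-symm} system. But that system does not produce objects of test type: its solution $(\sigma,u,\rho,a)$ has $\sigma\in \Hdiv{\om;\MMM}$ with only \emph{weakly} imposed symmetry (it is not an ``$H(\mathrm{div};\SSS)$ field''), whereas a test stress must lie in $H(\mathrm{div},\oh;\SSS)$; taking the symmetric part destroys the divergence control, and the remaining components $(u,\rho,a)$ are likewise trial-type objects. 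The paper uses the mixed solve in exactly the opposite direction (Lemma~\ref{lem:lower_bound}): given a \emph{test} function $(\tau,v,q,\beta)$ it sets $F_1=\tau$, $F_2=-v$, $F_3=q$, $F_4=N^{-1}\beta I/|\Omega|$ and inserts the resulting $(\sigma,u,\rho,a)$ as the trial function realizing (up to the constant $C_0$ of Theorem~\ref{thm:balanced_dmf}) the supremum defining $\optnn{(\tau,v,q,\beta)}$; this is precisely why weak symmetry and mere $L^2$ regularity of the mixed solution are assets rather than obstacles, and it is the step your plan cannot reproduce as written.

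There is a second omission tied to the same transposition: in a trial-side inf--sup you must bound the interface unknowns from below, i.e.\ you need
\[
\sup_{0\neq\tau\in H(\mathrm{div},\oh;\SSS)}\frac{\langle\hat u,\tau\,n\rangle_{\partial\oh}}{\Vert\tau\Vert_{\Hdiv\oh}}\;\gtrsim\;\Vert\hat u\Vert_{H_0^{1/2}(\partial\oh)}
\quad\text{and the analogue for }\sgn\text{ against broken }H^1\text{ test functions,}
\]
with the extra wrinkle that the test stresses are constrained to be symmetric, and you must then absorb the volume cross terms these special test functions generate. Calling this ``terms absorbed by the trace/flux norms'' hides a substantive argument that is nowhere in the paper because the paper's test-side route needs only the easy \emph{upper} bounds on the jump pairings (Lemma~\ref{upper_bound}). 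A smaller bookkeeping point: in \eqref{uniform_stability_constant} only one factor $(\Vert A\Vert+B)$ comes from continuity; the factors $B^{4}$ and $(\Vert A\Vert+P_{0}+1)^{2}$ enter through the stability constant $C_0$ of Theorem~\ref{thm:balanced_dmf} inside the inf--sup bound, not through boundedness. To make your route rigorous you would need a $Q_0$-robust adjoint solve delivering strongly symmetric broken-$H(\mathrm{div})$ test stresses together with the jump-duality lower bounds above; the simpler repair is to revert to the paper's arrangement and prove the lower bound for the optimal test norm, which is what Lemmas~\ref{lemma_uniqueness2}, \ref{lem:lower_bound} and~\ref{upper_bound} accomplish.
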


The proof of this theorem appears in Section~\ref{sec:anl}.

\section{The relationship between the two DPG methods}   \label{sec:relation}

In this section we will establish that for homogeneous isotropic
materials the two DPG methods are equivalent. We will also show that
despite the additional trial variable, the second DPG method can be
solved in essentially the same cost as  the first.

\subsection{The equivalence}

Recall that the first DPG variational formulation
uses the trial space 
\[U = L^{2}(\Omega;\mathbb{M})\times L^{2}(\Omega;\mathbb{V}) \times
H_{0}^{1/2}(\partial\Omega_{h};\mathbb{V})\times
H^{-1/2}(\partial\Omega_{h};\mathbb{V}),
\] 
while the second uses $U \times \RRR$. We begin with the following simple lemma.

\begin{lemma} \label{lem:relation_lemma1} 
  For any $\Uc \equiv (\sigma,u,\hat u, \sgn)$ in $U$, let 
  \[
  (\tau,v,q) = T\Uc.
  \]
  Then, with $\Ucc \equiv (\sigma,u,\hat u, \sgn,0)$, 
  \[
  \TT \Ucc =  (\tau, v, q, \beta), 
  \]
  with $\beta =Q_{0}^{-1}(A\sigma,I)_{\Omega}$.
\end{lemma}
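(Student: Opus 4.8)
The plan is to compare the two trial-to-test operators component by component, using the fact that the test norm on $\VV$ differs from that on $V$ only by the extra $\RRR$-slot (which carries the standard absolute value, i.e.\ $\Vert(\tau,v,q,\beta)\Vert_{\VV}^2 = \Vert(\tau,v,q)\Vert_V^2 + \beta^2$). Write $\TT\Ucc = (\tilde\tau,\tilde v,\tilde q,\tilde\beta)$. By definition~\eqref{eq:T}, for every $(\mu,w,r,\gamma)\in\VV$ we have
\[
(\tilde\tau,\mu)_{H(\mathrm{div},\oh)} + (\tilde v,w)_{H^1(\oh)} + (\tilde q,r)_{\Omega} + \tilde\beta\gamma
= \bb\big((\sigma,u,\hat u,\sgn,0),(\mu,w,r,\gamma)\big).
\]

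First I would isolate the $\gamma$-dependence: setting $\mu=0$, $w=0$, $r=0$ and $\gamma=1$ in the identity above, and reading off the terms of $\bb$ in~\eqref{eq:6} that involve $\beta$, only $Q_0^{-1}(A\sigma,\beta I)_{\oh}$ survives (the term $Q_0^{-1}(\alpha I,A\tau)_{\oh}$ vanishes because $\alpha=0$ in $\Ucc$, and the boundary term~\eqref{eq:bcterm} is absent under the clamped boundary condition). Hence $\tilde\beta = Q_0^{-1}(A\sigma,I)_{\oh} = Q_0^{-1}(A\sigma,I)_{\Omega}$, which is exactly the claimed value of $\beta$. Next I would take $\gamma=0$ and let $(\mu,w,r)$ range over $V$: then the right-hand side reduces to $\bb\big((\sigma,u,\hat u,\sgn,0),(\mu,w,r,0)\big)$, and comparing~\eqref{eq:6} with~\eqref{binear_form_concrete} shows this equals $b\big((\sigma,u,\hat u,\sgn),(\mu,w,r)\big)$ verbatim — the only extra pieces in $\bb$ are the two $Q_0^{-1}$ terms, one of which vanishes ($\alpha=0$) and the other of which has been set aside ($\gamma=0$). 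Therefore $(\tilde\tau,\tilde v,\tilde q)$ satisfies $(\tilde\tau,\mu)_{H(\mathrm{div},\oh)}+(\tilde v,w)_{H^1(\oh)}+(\tilde q,r)_{\Omega} = b\big(\Uc,(\mu,w,r)\big)$ for all $(\mu,w,r)\in V$, which is precisely the defining equation~\eqref{eq:T} for $T\Uc$. By uniqueness of the Riesz representative, $(\tilde\tau,\tilde v,\tilde q) = T\Uc = (\tau,v,q)$, completing the proof.

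There is no real obstacle here; the argument is essentially bookkeeping once one observes that the $\VV$-inner product is the $V$-inner product plus an orthogonal scalar term, so the two Riesz problems decouple. The only point requiring a word of care is making sure that, with $\alpha$ forced to $0$ in $\Ucc$, the term $Q_0^{-1}(\alpha I, A\tau)_{\oh}$ in~\eqref{eq:6} genuinely drops out of the test equation for all test functions — this is what makes the $(\tau,v,q)$-block of $\bb(\Ucc,\cdot)$ coincide with $b(\Uc,\cdot)$ rather than merely being comparable to it.
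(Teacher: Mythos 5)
Your proposal is correct and follows essentially the same route as the paper: both exploit the fact that the $\VV$-inner product is the $V$-inner product plus the orthogonal scalar term $\beta\gamma$, and that with $\alpha=0$ the form $\bb$ reduces to $b$ plus the single term $Q_0^{-1}(A\sigma,\beta I)_\Omega$. The only cosmetic difference is direction — the paper plugs in the candidate $(\tau,v,q,\beta)$ and verifies the defining identity of $\TT$, while you decouple the Riesz problem by testing the slots separately — but the underlying computation is identical.
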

\begin{proof}
  By the definition of $T$, we have, for any $(\delta_\tau,\delta_
  v,\delta_q)\in V=H(\text{div},\Omega_{h};\mathbb{S})\times
  H^{1}(\Omega_{h};\mathbb{V})\times L^{2}(\Omega_{h};\mathbb{K})$, 
\begin{align*}
\big(\,(\tau,v,q),\;  (\delta_\tau,\delta_v,\delta_q) \,\big)_{V}
& = 
(\tau,\delta_\tau)_{\Omega}
 +(\dive\tau,\delta_\tau)_{\Omega_{h}}+(v,\delta_v)_{\Omega}+(\nabla v,\nabla\delta_v)_{\Omega_{h}}+(q,\delta_q)_{\Omega}\\
=(A\sigma,\delta_\tau)_{\Omega} 
& +(u,\dive\delta_\tau)_{\Omega_{h}} 
-\langle \hat{u},\delta_\tau n\rangle_{\partial\Omega_{h}}
+(\sigma,\nabla \delta_v)_{\Omega_{h}}
%
-\langle \delta_v,\hat{\sigma}_{n}\rangle_{\partial\Omega_{h}}
+(\sigma,\delta_q)_{\Omega}.
\end{align*}
Therefore choosing
\begin{equation*}
\beta=Q_{0}^{-1}(A\sigma,I)_{\Omega},
\end{equation*}
we obviously  obtain
\begin{align*}
\big(\,(\tau,v,q,\beta), &\;  
 (\delta_\tau,\delta_v,\delta_q,\delta_\beta) \,\big)_{\VV}
\\
&= (\tau,\delta_\tau)_{\Omega}
+(\dive\tau,\delta_\tau)_{\Omega_{h}}+(v,\delta_v)_{\Omega}
+(\nabla v,\nabla\delta_v)_{\Omega_{h}}
+ (q,\delta_q)_{\Omega}
+\beta\delta_\beta
\\
& =(A\sigma,\delta_\tau)_{\Omega}
+(u,\dive\delta_\tau)_{\Omega_{h}}-\langle \hat{u},\delta_\tau n\rangle_{\partial\Omega_{h}}
+(\sigma,\nabla\delta_v)_{\Omega_{h}}
%
-\langle \delta_v,\hat{\sigma}_{n}\rangle_{\partial\Omega_{h}}
\\
& 
%
+(\sigma,\delta_q)_{\Omega}
+Q_{0}^{-1}(A\sigma,\delta_\beta I)_{\Omega}
\\
& =   
\bb\big(\,(\sigma,u,\hat{u},\hat{\sigma}_{n},0),\;(\tau,v,q,\beta)\big)
\end{align*}
for any $(\delta_\tau,\delta_v,\delta_q,\delta_\beta)\in V\times\mathbb{R}=V^{(2)}$.
This finishes the proof.
\end{proof}

We use the above result together with the following assumption to
prove the equivalence.  The assumption essentially states that the
material is homogeneous and isotropic and that the discrete trial
space at least contains two specific functions related to the identity
matrix.

\begin{assumption} \label{ass_two_methods} 
  Suppose Assumption~\ref{asm:iso} (on isotropy) holds.  Let the discrete trial
  subspace $U_h \subseteq U$ of the first DPG method be
\begin{equation}
  \label{eq:Uh}
  U_h = \Sigma_h \times W_h \times M_h \times F_h,
\end{equation}
and let the second DPG method use the trial space $U_h \times
\RRR$. We assume that
\begin{enumerate}
\item $Q(x)=Q_{0}$ for all $x\in \Omega$,
\item $I \in \Sigma_h$, 
\item $In|_{\partial\Omega_h} \in F_h$.
\end{enumerate}
\end{assumption}

\begin{lemma}
\label{lem:relation_lemma2}
If Assumption~\ref{ass_two_methods} holds, then $(I,0,0)\in V_{h}
\equiv T(U_h)$.
\end{lemma}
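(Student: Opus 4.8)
The plan is to produce an explicit element of $U_h$ that the trial-to-test operator $T$ maps onto $(I,0,0)$. The first observation is that, since $I$ is constant, $\dive I = 0$, so for every test triple $(\tau,v,q)\in V$ the Riesz inner product with the target collapses:
\[
\big((I,0,0),(\tau,v,q)\big)_V=(I,\tau)_{\oh}+(\dive I,\dive\tau)_{\oh}+0+0+0=(I,\tau)_{\om}.
\]
Hence, by the definition of $T$, it suffices to exhibit $\Uc_h\in U_h$ with $b(\Uc_h,(\tau,v,q))=(I,\tau)_{\om}$ for all $(\tau,v,q)\in V$.

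The candidate I would use is
\[
\Uc_h=\big(\,Q_0^{-1}I,\;0,\;0,\;Q_0^{-1}(In)|_{\partial\oh}\,\big).
\]
Its membership in $U_h=\Sigma_h\times W_h\times M_h\times F_h$ is precisely where Assumption~\ref{ass_two_methods} enters: part~(2) gives $I\in\Sigma_h$, hence $Q_0^{-1}I\in\Sigma_h$ by linearity; part~(3) gives $In|_{\partial\oh}\in F_h$, hence the scaled flux lies in $F_h$ as well; and $0\in W_h\cap M_h$ trivially. The fourth component is moreover a legitimate element of $H^{-1/2}(\partial\oh;\VVV)$, being the normal trace of $Q_0^{-1}I\in\Hdiv{\om;\MMM}$.

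It then remains to evaluate $b(\Uc_h,(\tau,v,q))$ term by term. The first term is $(A(Q_0^{-1}I),\tau)_{\oh}=Q_0^{-1}(AI,\tau)_{\oh}$; under Assumption~\ref{asm:iso} together with Assumption~\ref{ass_two_methods}(1) ($Q\equiv Q_0$) one has $AI=Q_0 I$, so this term contributes exactly $(I,\tau)_{\om}$. The terms $(u,\dive\tau)_{\oh}$ and $\langle\hat u,\tau n\rangle_{\partial\oh}$ vanish since $u=\hat u=0$. For the pair $(Q_0^{-1}I,\nabla v)_{\oh}-\langle v,\sgn\rangle_{\partial\oh}$, I would integrate by parts element by element: since $\dive(Q_0^{-1}I)=0$, one gets $(Q_0^{-1}I,\nabla v)_K=Q_0^{-1}\langle v,In\rangle_{1/2,\partial K}$, which is cancelled precisely by the chosen flux term. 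Finally $(Q_0^{-1}I,q)_{\oh}=0$ because $I\!:\!q=\tr q=0$ for skew-symmetric $q$. Summing the contributions yields $b(\Uc_h,(\tau,v,q))=(I,\tau)_{\om}$ for all $(\tau,v,q)\in V$, so $T\Uc_h=(I,0,0)$, i.e.\ $(I,0,0)\in V_h$.

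The computation is short and there is no substantial obstacle; the one point that needs attention is realizing that the numerical flux must be taken equal to $Q_0^{-1}(In)|_{\partial\oh}$ rather than zero, exactly so as to absorb the boundary term produced when $(\sigma,\nabla v)_{\oh}$ is integrated by parts — and then checking, via Assumption~\ref{ass_two_methods}(3), that this particular flux belongs to the discrete space. Everything else is forced by the structure of $b$ and the isotropy identity $AI=Q_0 I$.
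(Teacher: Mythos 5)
Your proof is correct and follows essentially the same route as the paper: the paper takes the trial function $(I,0,0,In|_{\partial\Omega_h})$ (yours is just the $Q_0^{-1}$-scaled version), uses $AI=Q_0I$, element-wise integration by parts, and the skew-symmetry of $q$ to identify its image under $T$ as $(Q_0I,0,0)$, which gives $(I,0,0)\in V_h$ by linearity. The only cosmetic difference is that you verify the Riesz identity directly against the target $(I,0,0)$ instead of solving the trial-to-test equation and reading off its unique solution.
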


\begin{proof}
  By virtue of the assumption, the trial function $\Uc = (\sigma,
  u,\hat u,\sgn)$ with $\sigma = I, u= 0, \hat{\sigma}_{n} =
  In|_{\partial\Omega_{h}},$ and $\hat{u}=0$, is in $U_h$. Hence,
  $({\tau},{v},{q}) \equiv T\Uc$ is in $V_h$ and satisfies
  \begin{equation}
    \label{eq:8}
    \begin{aligned}
      ({\tau},\delta_\tau)_{\Omega}
      & +(\dive{\tau},\dive\delta_\tau)_{\Omega_{h}}+({v},\delta_v)_{\Omega}
      +(\nabla{v},\nabla\delta_v)_{\Omega_{h}}+({q},\delta_q)_{\Omega}\\
      &= (AI,\delta_\tau)_{\Omega}+(I,\nabla\delta_v)_{\Omega_{h}}-\langle\delta_v, In\rangle_{\partial\Omega_{h}}
      +(I,\delta_q)_{\Omega}
    \end{aligned}
  \end{equation}
  for all $(\delta_\tau,\delta_v,\delta_q) \in V =
  H(\text{div},\Omega_{h};\mathbb{S})\times
  H^{1}(\Omega_{h};\mathbb{V}) \times L^{2}(\Omega_{h};\mathbb{K})$.
  The last term in~\eqref{eq:8} vanishes due to the skew symmetry of
  $\delta_q$. Integration by parts shows that
  $(I,\nabla\delta_v)_{\Omega_{h}}-\langle\delta_v,
  In\rangle_{\partial\Omega_{h}}=0$ also. Hence, we conclude that
  $({\tau},{v},{q})=(Q_{0}I,0,0)$ is the unique solution
  of~\eqref{eq:8}. Since $({\tau},{v},{q})$ is in $V_h$, we have
  proved the lemma.
\end{proof}

\begin{lemma} \label{lem:relation_lemma3} 
  If Assumption~\ref{ass_two_methods} holds, then
  \[
  (I,0,0,0) = \TT(0,0,0,0,1).
  \]
\end{lemma}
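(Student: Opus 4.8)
The plan is to evaluate $\TT(0,0,0,0,1)$ straight from its defining relation and then recognize the answer. By definition, $\TT(0,0,0,0,1)$ is the unique element $(\tau,v,q,\beta)\in\VV$ satisfying
\[
\big((\tau,v,q,\beta),\,(\delta_\tau,\delta_v,\delta_q,\delta_\beta)\big)_{\VV}
=\bb\big((0,0,0,0,1),\,(\delta_\tau,\delta_v,\delta_q,\delta_\beta)\big)
\qquad\forall\,(\delta_\tau,\delta_v,\delta_q,\delta_\beta)\in\VV,
\]
the solution being unique because $(\cdot,\cdot)_{\VV}$ is an inner product. So it suffices to verify that the candidate $(\tau,v,q,\beta)=(I,0,0,0)$ satisfies this identity for every test tuple.

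First I would simplify the right-hand side. Substituting $\sigma=0$, $u=0$, $\hat u=0$, $\sgn=0$, $\alpha=1$ into the formula~\eqref{eq:6} for $\bb$ annihilates every term except $Q_{0}^{-1}(\alpha I,A\tau)_{\Omega_{h}}$, which evaluates to $Q_{0}^{-1}(I,A\delta_\tau)_{\Omega_{h}}=Q_{0}^{-1}(AI,\delta_\tau)_{\Omega_{h}}$ using that $A$ is self-adjoint. Now Assumption~\ref{asm:iso} and its decomposition~\eqref{PQ_decomp} give $AI=P\cdot 0+Q\,(\trace(I)/N)\,I=Q\,I$ (the relevant extension of $A$ from $\SSS$ to $\MMM$ being that of Remark~\ref{rem:extiso}, which is immaterial here since $I\in\SSS$), and part~(1) of Assumption~\ref{ass_two_methods}, namely $Q\equiv Q_{0}$, then yields $AI=Q_{0}I$. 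Hence the right-hand side equals $(I,\delta_\tau)_{\Omega}$ for every admissible test tuple.

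Next I would evaluate the left-hand side at $(\tau,v,q,\beta)=(I,0,0,0)$. Since $v=0$, $q=0$ and $\beta=0$, only the $\tau$-block of the $\VV$-inner product survives and contributes $(I,\delta_\tau)_{\Omega}+(\dive I,\dive\delta_\tau)_{\Omega_{h}}$; because $\dive I=0$ this collapses to $(I,\delta_\tau)_{\Omega}$, which matches the right-hand side for every test tuple. By the uniqueness recorded above, $\TT(0,0,0,0,1)=(I,0,0,0)$.

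There is no real obstacle: the entire argument rests on the single observation that, under isotropy combined with $Q\equiv Q_{0}$, one has $AI=Q_{0}I$, together with the trivial fact $\dive I=0$. The only bookkeeping point is to keep track of which extension of $A$ to $\MMM$ is in force, and as noted this plays no role here because $A$ is applied only to the symmetric matrix $I$.
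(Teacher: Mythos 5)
Your proposal is correct and follows essentially the same route as the paper: both reduce the right-hand side to $Q_0^{-1}(AI,\delta_\tau)_\Omega=(I,\delta_\tau)_\Omega$ using self-adjointness of $A$, isotropy, and $Q\equiv Q_0$, and then identify the Riesz representative as $(I,0,0,0)$ via $\dive I=0$. The only (immaterial) difference is that you verify the candidate and invoke uniqueness of the representer explicitly, whereas the paper derives the defining equation and declares the solution obvious.
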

\begin{proof}
  Let  $({\tau},{v},{q},{\beta}) = \TT(0,0,0,0,\alpha)$.
  Then by the definition of $\TT$, we have
\begin{align*}
({\tau},\delta_\tau)_{\Omega}
& 
+(\dive{\tau},\dive\delta_\tau)_{\Omega_{h}}+({v},\delta_v)_{\Omega}
+(\nabla{v},\nabla\delta_v)_{\Omega_{h}}+({q},\delta_q)_{\Omega}
+{\beta}\delta_\beta
\\
&
= Q_{0}^{-1}(\alpha I, A \delta_\tau)_{\Omega}
\end{align*}
for all $(\delta_\tau,\delta_v,\delta_q,\delta_\beta)\in V^{(2)}=H(\text{div},\Omega_{h};\mathbb{S})\times 
H^{1}(\Omega_{h};\mathbb{V})\times L^{2}(\Omega_{h};\mathbb{K})\times\mathbb{R}$. 
Putting $\alpha=1$, 
and using
the symmetry of $A$, the right hand side 
$Q_{0}^{-1}(\alpha I, A \delta_\tau)_{\Omega}
= Q_{0}^{-1}( AI, \delta_\tau)_{\Omega}$, which due to
 the assumption on $A$ equals
$ (I,\delta_\tau)_{\Omega}$, i.e., 
\begin{align*}
({\tau},\delta_\tau)_{\Omega}
& 
+(\dive{\tau},\dive\delta_\tau)_{\Omega_{h}}
= (I,\delta_\tau)_{\Omega}.
\end{align*}
It is now obvious that $({\tau},{v},{q},{\beta})=(I,0,0,0)$. 
\end{proof}

\begin{theorem}  \label{thm:relation_thm1}
  Suppose Assumption~\ref{ass_two_methods} holds.  Then
  $(\sigma_{h},u_{h},\hat{u}_{h},\hat{\sigma}_{n,h})$ solves the 
  first DPG method~\eqref{eq:dpgapprox} if and only if
  $(\sigma_{h},u_{h},\hat{u}_{h},\hat{\sigma}_{n,h},0)$ is the
  discrete solution of the second DPG method~\eqref{eq:dpgapprx2}.
\end{theorem}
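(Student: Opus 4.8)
The plan is to prove the single implication that, whenever $(\sigma_h,u_h,\hat u_h,\sgnh)$ solves the first DPG method~\eqref{eq:dpgapprox}, the padded tuple $(\sigma_h,u_h,\hat u_h,\sgnh,0)$ satisfies the second DPG equations~\eqref{eq:dpgapprx2}, and then to read off the full equivalence from uniqueness. Indeed, both discrete problems have unique solutions by Theorem~\ref{thm:dpg1} and Theorem~\ref{thm:dpg2}; so once this implication is established, the padded first solution \emph{is} the (unique) second solution, whence the second solution always has a vanishing last component and its first four components solve the first method. This delivers both directions of the ``if and only if'' simultaneously, so the entire theorem reduces to that one implication.

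The implication rests on two preparatory observations. First, under Assumption~\ref{ass_two_methods} the first DPG solution automatically satisfies the discrete analogue of~\eqref{zero_trace_eq}, namely $(A\sigma_h,I)_\Omega=0$ (recall $(A\sigma_h,I)_\Omega=\int_\Omega\tr(A\sigma_h)$). To see this, use Lemma~\ref{lem:relation_lemma2} to get $(I,0,0)\in V_h$ and test~\eqref{eq:dpgapprox} with $(\tau,v,q)=(I,0,0)$, obtaining $b((\sigma_h,u_h,\hat u_h,\sgnh),(I,0,0))=l(I,0,0)=0$; expanding $b$ and using $\dive I=0$ together with $\langle\hat u_h,In\rangle_{\partial\oh}=0$ yields the claim. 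The vanishing of $\langle\hat u_h,In\rangle_{\partial\oh}$ follows by integrating the \emph{global} $H(\dive,\Omega)$ field $I$ by parts element by element against the $H_0^1(\Omega;\VVV)$-representative $w$ of $\hat u_h$, the interface contributions telescoping to $\int_\Omega\dive w=0$. Second, I describe the discrete test space $V_h^{(2)}=\TT(U_h\times\RRR)$ explicitly: by linearity of $\TT$, a generic element is $\TT(\Uc,0)+c\,\TT(0,0,0,0,1)$, which by Lemma~\ref{lem:relation_lemma1} and Lemma~\ref{lem:relation_lemma3} equals $(\tau_\Uc+cI,\,v_\Uc,\,q_\Uc,\,\gamma_\Uc)$, where $(\tau_\Uc,v_\Uc,q_\Uc)=T\Uc\in V_h$ and $\gamma_\Uc=Q_0^{-1}(A\sigma_\Uc,I)_\Omega$.

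The verification is then routine. Since $\bb(\cdot,(\tau,v,q,\beta))=b(\cdot,(\tau,v,q))+Q_0^{-1}(\alpha I,A\tau)_\Omega+Q_0^{-1}(A\sigma,\beta I)_\Omega$, evaluating at the trial tuple $(\sigma_h,u_h,\hat u_h,\sgnh,0)$ and the test tuple $(\tau_\Uc+cI,v_\Uc,q_\Uc,\gamma_\Uc)$ gives: the $\alpha$-term vanishes because the last trial entry is $0$; the $\beta$-term is $Q_0^{-1}\gamma_\Uc(A\sigma_h,I)_\Omega=0$ by the first observation; and, by linearity of $b$ in its test argument, $b((\sigma_h,u_h,\hat u_h,\sgnh),(\tau_\Uc+cI,v_\Uc,q_\Uc))=l(T\Uc)+c\,(A\sigma_h,I)_\Omega=(f,v_\Uc)$, again using the first observation and the first DPG equation. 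As $l^{(2)}(\tau_\Uc+cI,v_\Uc,q_\Uc,\gamma_\Uc)=(f,v_\Uc)$ too, the second DPG equations hold on all of $V_h^{(2)}$, proving the implication. The only step requiring genuine care is the identity $(A\sigma_h,I)_\Omega=0$ --- specifically the vanishing of $\langle\hat u_h,In\rangle_{\partial\oh}$, which hinges on $\hat u_h$ being the trace of a global $H_0^1$ field and $I$ being a global $H(\dive)$ field so that the element-wise boundary terms cancel; everything else is bookkeeping with $b$, $\bb$, $T$, $\TT$ and the three preceding lemmas.
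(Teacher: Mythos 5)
Your proposal is correct and follows essentially the same route as the paper's proof: it rests on Lemmas~\ref{lem:relation_lemma1}--\ref{lem:relation_lemma3} to describe $\TT(U_h\times\RRR)$, derives the key identity $(A\sigma_h,I)_{\Omega}=0$ by testing the first method with $(I,0,0)\in V_h$ (exactly the paper's step, with the vanishing of $\langle\hat u_h,In\rangle_{\partial\oh}$ spelled out a bit more explicitly), and obtains the converse from the uniqueness statements of Theorems~\ref{thm:dpg1} and~\ref{thm:dpg2}. The only cosmetic difference is that you treat a generic test function as one linear combination rather than the paper's two separate families, which changes nothing substantive.
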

\begin{proof}
  Suppose $(\sigma_{h},u_{h},\hat{u}_{h},\hat{\sigma}_{n,h})$ solves
  the first DPG method~\eqref{eq:DPG1}. To show that
  the function $(\sigma_{h},u_{h},\hat{u}_{h},\hat{\sigma}_{n,h},0)$ satisfies the
  second DPG method~(\ref{eq:dpg2}), we have to show that the equations
  \begin{subequations}
    \label{eq:10}
  \begin{align}
    \label{eq:10-a}
    (A\sigma_h,\tau)_{\Omega_{h}}+(u_h,\dive\tau)_{\Omega_{h}}
    -\langle\hat{u},\tau\,n\rangle_{\partial\Omega_{h}}
    +Q_{0}^{-1}(\alpha I,A\tau)_{\Omega_{h}}
      & = 0 
      \\
      \label{eq:10-b}
      (\sigma_h,\nabla v)_{\Omega_{h}}
      -\langle v,\sgnh \rangle_{\partial\Omega_{h}}
      & = (f,v)
      \\
      \label{eq:10-d}
      (\sigma_h,q)_{\Omega_{h}} & =0
      \\
      \label{eq:10-c}
      Q_{0}^{-1}(A\sigma,\beta I)_{\Omega_{h}} 
      & = 0,
  \end{align}
  \end{subequations}
  hold, with $\alpha=0$, for all $(\tau,v,q,\beta) \in \TT( U_h \times
  \RRR)$. To this end, we proceed in two steps.

  First, we consider test functions of the type $\TT( U_{h}\times
  \{0\})$. By virtue of Lemma~\ref{lem:relation_lemma1}, these test
  functions are in $T(U_h) \times \RRR$.  Hence by the fact that
  $(\sigma_{h},u_{h},\hat{u}_{h},\hat{\sigma}_{n,h})$ solves the
  equation of the first DPG method~\eqref{eq:dpgapprox} for all
  $(\tau,v,q) \in T(U_h)$, we observe that~\eqref{eq:10-a},
  \eqref{eq:10-b} and~\eqref{eq:10-d} hold. To show that~\eqref{eq:10-c} also holds,
  we observe that because of Lemma~\ref{lem:relation_lemma2}, we may
  put $(\tau,v,q)=(\beta I,0,0)$, for any $\beta \in \RRR$,
  in~\eqref{eq:dpgapprox} to get
  \begin{align*}
    (A\sigma_{h},\beta I)_{\Omega_{h}}
    +(u_{h},\dive \beta I)_{\Omega_{h}}
    -\langle\hat{u}_{h},\beta I n\rangle_{\partial\Omega_{h}}
   = 
   0.
  \end{align*}
  Multiplying by $Q_0^{-1}$ and simplifying, we obtain~\eqref{eq:10-c}
  for any $\beta$.

  Second, consider test functions of the type $\TT( \{(0,0,0,0)\}
  \times \RRR)$. But by Lemma~\ref{lem:relation_lemma3}, we know that 
  $\TT(0,0,0,0,1) =
  (I,0,0,0)$.  So to show that~\eqref{eq:10} holds for this test
  function, it suffices to prove that~\eqref{eq:10-a} holds with $\tau
  = I$. But this follows from Lemma~\ref{lem:relation_lemma2}, which
  shows that $(I,0,0)$ is in $V_h$.

  Conversely, suppose
  $(\sigma_{h},u_{h},\hat{u}_{h},\hat{\sigma}_{n,h},0)$ satisfies the
  second DPG method~\eqref{eq:dpgapprx2}. Then,
  $(\sigma_{h},u_{h},\hat{u}_{h},\hat{\sigma}_{n,h})$ must satisfy the
  first DPG method, for if not, there must be another function
  $(\sigma_{h}',u_{h}',\hat{u}_{h}',\hat{\sigma}_{n,h}')$ solving the
  first DPG method. But then, the already proved implication shows that
  $(\sigma_{h}',u_{h}',\hat{u}_{h}',\hat{\sigma}_{n,h}',0)$ must solve
  the second DPG method. This contradicts the unique solvability of
  the second DPG method asserted in Theorem~\ref{thm:dpg2}.
\end{proof}

\subsection{Solving the second DPG system}

The relationship between the two methods revealed above, can be
utilized to obtain a useful strategy for solving the second DPG
system.

To understand the linear systems that result from both methods, 
we assume that a basis for $U_h$ is made of local (standard finite
element) functions
\[
e_i = (\sigma_{i},u_{i},\hat{u}_{i},\hat{\sigma}_{n,i})
\]
for $i=1,2,\ldots, m$.  Then the corresponding basis for $V_h$ is
furnished by $t_j = Te_i$. Hence the $m\times m$ stiffness matrix of
the first DPG method $E$ has entries
\[
E_{ij} = b( e_j, t_i).
\]
It is easily seen that this matrix is symmetric and positive
definite. (This is a general property of DPG stiffness matrices --
see~\cite{DemkoGopal:DPGanl} or~\cite{DemkoGopal:2010:DPG2}.) In
addition, $E$ is sparse due to the locality of the basis functions.

Now, consider the stiffness matrix of the second DPG method. Here, we
need a basis for $U_h \times \RRR$. It is natural to take as basis for
$U_h \times \RRR$, the $m+1$ functions defined by
\[
\ee_i = \left\{
  \begin{aligned}
     &(e_i,0), && \text{ for all } i=1,2,\ldots, m,
     \\
     & (0,0,0,0,1), && \text{ for } i=m+1.
  \end{aligned}
\right.
\]
But we should then note that the stiffness matrix 
\[
\EE_{ij} = \bb( \ee_j, \TT \ee_i )
\]
is no longer sparse. This is because $\TT \ee_i$ is not locally
supported. Indeed, if we write $\TT \ee_i$ as $(\tau_i,
v_i,q_i,\beta_i)$, then $\beta_i$ can be globally supported even if
$\ee_i$ is local. Thus, although $\EE$ is symmetric and positive
definite, one may be led into concluding that the second DPG method is
too expensive due to the non-sparsity.

However, this is not the case. Below, we will show how to solve a
system $\EE \xx = \yy$ by solving a system $E x = y$ and performing a
few additional inexpensive steps. A key observation is that $\tilde E$
is a rank-one perturbation of $E$.

\begin{proposition}
  \label{prop:dpg2}
  Decompose the matrix $\EE$ into 
  \[
  \EE = 
  \begin{bmatrix}
    \tilde E & c \\
    c' & d 
  \end{bmatrix},
  \]
  where $c \in \RRR^m$. Then 
  \[
  \tilde E = E + \ell \ell'
  \]
  where $\ell\in \RRR^m$ is defined by $\ell_j = Q_0^{-1} (A\sigma_j, I)_\om$.
\end{proposition}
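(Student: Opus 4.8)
The plan is to compare the two stiffness matrices entry by entry on the block corresponding to the indices $i,j \in \{1,\dots,m\}$, using the relationship between the trial-to-test operators $T$ and $\TT$ established in Lemma~\ref{lem:relation_lemma1}. First I would fix $i,j\le m$ and write $\ee_i = (e_i,0)$, $\ee_j=(e_j,0)$, so that $\tilde E_{ij} = \bb(\ee_j, \TT\ee_i) = \bb((e_j,0), \TT(e_i,0))$. By Lemma~\ref{lem:relation_lemma1}, if $T e_i = (\tau_i,v_i,q_i)$ then $\TT(e_i,0) = (\tau_i,v_i,q_i,\beta_i)$ with $\beta_i = Q_0^{-1}(A\sigma_i, I)_\om = \ell_i$, where $\sigma_i$ is the $L^2(\om;\MMM)$-component of $e_i$.

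Next I would expand $\bb((e_j,0),(\tau_i,v_i,q_i,\beta_i))$ using the definition~\eqref{eq:6} of $\bb$. Setting the trial-side Lagrange multiplier $\alpha$ to zero in~\eqref{eq:6} kills the term $Q_0^{-1}(\alpha I, A\tau_i)_\oh$, so the only term of $\bb$ not already present in $b$ is $Q_0^{-1}(A\sigma_j, \beta_i I)_\oh$. Thus
\[
\tilde E_{ij} = b(e_j,(\tau_i,v_i,q_i)) + Q_0^{-1}(A\sigma_j, \beta_i I)_\om = b(e_j, Te_i) + \beta_i\, Q_0^{-1}(A\sigma_j,I)_\om = E_{ij} + \ell_i\ell_j,
\]
using $b(e_j,Te_i) = E_{ij}$ by definition and $Q_0^{-1}(A\sigma_j,I)_\om = \ell_j$ by the definition of $\ell$. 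Since this holds for all $i,j\le m$, we get $\tilde E = E + \ell\ell'$ as claimed.

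The only subtlety worth checking carefully — and I expect it to be the sole mild obstacle — is the bookkeeping of which variables are ``trial'' and which are ``test'' in the extra terms of $\bb$: the form~\eqref{eq:6} contains both $Q_0^{-1}(\alpha I, A\tau)_\oh$ (trial multiplier $\alpha$, test $\tau$) and $Q_0^{-1}(A\sigma,\beta I)_\oh$ (trial $\sigma$, test multiplier $\beta$), and one must verify that with $\alpha=0$ on the trial side only the second survives, contributing exactly $\ell_i\ell_j$ after substituting $\beta_i=\ell_i$ and recognizing $Q_0^{-1}(A\sigma_j,I)_\om=\ell_j$. Everything else is a direct substitution using Lemma~\ref{lem:relation_lemma1} and the definitions of $E$, $\tilde E$, and $\ell$; no inf-sup or well-posedness machinery is needed here.
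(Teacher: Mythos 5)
Your proposal is correct and follows essentially the same route as the paper: both invoke Lemma~\ref{lem:relation_lemma1} to identify $\TT(e_i,0)=(\tau_i,v_i,q_i,\beta_i)$ with $\beta_i=\ell_i$, and then reduce $\tilde E_{ij}=\bb(\ee_j,\TT\ee_i)$ to $b(e_j,Te_i)+Q_0^{-1}(A\sigma_j,\beta_i I)_\om=E_{ij}+\ell_i\ell_j$, the only difference being that the paper splits the test function as $(\tau_i,v_i,q_i,0)+(0,0,0,\beta_i)$ by linearity while you expand the form directly and observe that $\alpha=0$ on the trial side kills the other extra term. The bookkeeping point you flag is exactly the one the paper's splitting handles, and your treatment of it is accurate.
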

\begin{proof}
  Let $(\tau_{i},v_{i},q_i) = T e_i$. Then, by
  Lemma~\ref{lem:relation_lemma1}, $\TT \ee_i = 
  (\tau_i, v_i,q_i,\beta_i)$ with
  \[
  \beta_i = Q_0^{-1} (A\sigma_i, I)_\om.
  \]
  Hence,
  \begin{align*}
    \tilde E_{ij} 
    & = \bb\big( \ee_j, \TT\ee_i\big)
    \\
    & = \bb\big( \ee_j, (\tau_i, v_i,q_i,\beta_i)\,\big)
    \\
    & = \bb\big( \ee_j, (\tau_i, v_i,q_i,0)\,\big)
    + \bb\big( \ee_j, (0,0,0,\beta_i)\,\big)
    \\
    & = b(e_j,Te_i)
    + Q_0^{-1} (\beta_i I, A \sigma_j)_\om
    \\
    & = E_{ij} + \beta_i\beta_j.
  \end{align*}
  The result follows because $\beta_i=\ell_i$.
\end{proof}

A consequence of this proposition is that we can invert $\tilde E$
using the Sherman-Morrison formula~\cite{Hager89}, namely 
\begin{equation}
  \label{eq:11}
  (E + \ell \ell')^{-1} 
  =
  E^{-1}  - a (E^{-1} \ell) (E^{-1} \ell)',
  \quad\text{with}\quad
  a = \frac{1}{1 + \ell' E^{-1} \ell}.
\end{equation}
Therefore, to conclude this discussion, consider the matrix system
arising from the second DPG method~\eqref{eq:dpgapprx2}:
\begin{align*}
  \begin{bmatrix}
    \tilde E & c \\
    c' & d 
  \end{bmatrix}
  \begin{bmatrix}
    x \\ y 
  \end{bmatrix}
  = 
  \begin{bmatrix}
    g \\ 0 
  \end{bmatrix}.
\end{align*}
Since $\tilde{E}$ is nonsingular, the solution is given by 
\begin{align*}
y = -(d-c'\tilde{E}^{-1}c)^{-1}c'\tilde{E}^{-1}g,\qquad
x = \tilde{E}^{-1}g-\tilde{E}^{-1}cy.
\end{align*}
Hence the solution of the second DPG method can be obtained by solving
the two linear systems $\tilde E x_c= c$ and $\tilde E x_g = g$.  Each
of these systems can be solved using formula~\eqref{eq:11} at
essentially the same cost as solving a system involving~$E$.

\section{Error analysis}     \label{sec:anl}

In this section, we prove Theorems~\ref{thm:dpg1}
and~\ref{thm:dpg2}. We will give the proof in full detail for the more
difficult case of Theorem~\ref{thm:dpg2} first. Afterward, we will
indicate the minor modification required to prove
Theorem~\ref{thm:dpg1} in a similar fashion.  The plan is to use the
abstract DPG framework developed
in~\cite{DemkoGopal:DPGanl,DemkoGopal:2010:DPG1,DemkoGopal:2010:DPG2,DemkoGopalNiemi:2010:DPG3,ZitelMugaDemko:2010:DPG4},
summarized next.

\subsection{Abstract quasioptimality}    \label{ssec:quas}

Let $X$ with norm $\| \cdot\|_X$ be a reflexive Banach space over
$\RRR$, $Y$ with norm $ \| \cdot\|_Y$ be a Hilbert space over $\RRR$
with inner product $(\cdot,\cdot)_Y$, and $b(\cdot,\cdot) : X \times Y
\mapsto \RRR$ be a bilinear form.  The abstract trial-to-test operator
$T: X \mapsto Y$ is defined -- as before -- by $(Tu,v)_Y = b(u,v)$ for
all $u\in X$ and $y \in Y$.  
We can write the DPG method using an arbitrary subspace $X_h \subseteq
X$ even in this generality. Let $Y_h = T(X_h)$.

\begin{theorem}[see~{\cite[Theorem~2.1]{DemkoGopal:DPGanl}}]
  \label{thm:abs}
  Suppose $x \in X$ and $x_h \in X_h$ satisfy
  \begin{align*}
    b(x,y) & = l(y) && \forall \; v \in Y,
    \\
    b(x_h,y) & = l(y) && \forall \; y \in Y_h.
  \end{align*}
  Assume that 
  \begin{equation}
    \label{eq:inj}
    \{ w \in X :  \; b(w,v)  = 0,\; \forall v \in Y \} = \{ 0 \}
  \end{equation}
   and that there are positive constants $C_1, C_2$  such that 
   \begin{equation}
   \label{eq:equiv}
    C_1 \| y \|_Y  \le    \optnorm y Y    \le C_2 \| y \|_Y,
    \qquad \forall y \in Y,
  \end{equation}
  where the so-called ``optimal norm'' is defined by
  \begin{equation}
    \label{eq:opt}
    \optnorm{ y}{Y}  = \sup_{xw \in X } \frac{ b(x,y) }{ \| x \|_X }.
  \end{equation}
  Then 
  \[
  \| x - x_h \|_X \le \frac{C_2}{C_1} \inf_{z_h \in X_h} \| x - z_h \|_X.
  \]
\end{theorem}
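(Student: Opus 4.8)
The plan is to show that the discrete solution $x_h$ is exactly the best approximation to $x$ from $X_h$ in the \emph{energy norm} $\|w\|_E := \|Tw\|_Y$ on $X$, and then to prove that this energy norm is equivalent to $\|\cdot\|_X$ with precisely the constants $C_1$ and $C_2$; the quasioptimality bound then falls out by chaining these equivalences around the best-approximation property.

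First I would record the elementary facts about the energy norm. Because $(Tw,v)_Y = b(w,v)$ and $Y$ is Hilbert, $\|w\|_E = \sup_{0\ne v\in Y} b(w,v)/\|v\|_Y$, and $\|\cdot\|_E$ derives from the bilinear form $(w_1,w_2)_E := (Tw_1,Tw_2)_Y$; assumption~\eqref{eq:inj} (injectivity of $T$) makes $(\cdot,\cdot)_E$ an inner product and $\|\cdot\|_E$ a genuine norm. Next, subtracting the two equations satisfied by $x$ and $x_h$ gives $b(x-x_h,y)=0$ for all $y\in Y_h=T(X_h)$; choosing $y=Tz_h$ for an arbitrary $z_h\in X_h$ yields $(x-x_h,z_h)_E = (T(x-x_h),Tz_h)_Y = 0$. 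Hence $x-x_h$ is $(\cdot,\cdot)_E$-orthogonal to $X_h$, so by the Pythagorean identity $\|x-x_h\|_E \le \|x-z_h\|_E$ for every $z_h\in X_h$, i.e.\ $x_h$ is the $E$-best approximation. This is the routine half.

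The substantive step is the norm equivalence $C_1\|w\|_X \le \|w\|_E \le C_2\|w\|_X$. Introduce $T':Y\to X'$ by $\langle T'v,w\rangle = b(w,v)$; then $\optnorm{v}{Y} = \sup_{0 \ne w \in X}b(w,v)/\|w\|_X = \|T'v\|_{X'}$, so hypothesis~\eqref{eq:equiv} reads $C_1\|v\|_Y \le \|T'v\|_{X'} \le C_2\|v\|_Y$ for all $v\in Y$. The upper bound gives $\|T'\|_{Y\to X'}\le C_2$, and since $\|T\|_{X\to Y}=\|T'\|_{Y\to X'}$ (as $Y$ is Hilbert), $\|w\|_E = \|Tw\|_Y \le C_2\|w\|_X$ at once. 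The lower bound is the delicate point: $\|T'v\|_{X'}\ge C_1\|v\|_Y$ says $T'$ is bounded below, hence injective with closed range; combining this with~\eqref{eq:inj} (so $\ker T=\{0\}$, whence $(\ker T)^\perp = X'$) and the closed range theorem, one concludes that $T'$ is in fact onto $X'$ and that $T$ is a Banach-space isomorphism of $X$ onto $Y$. Since $(T^{-1})' = (T')^{-1}$ and $\|S\|=\|S'\|$,
\[
\inf_{0\ne w\in X}\frac{\|Tw\|_Y}{\|w\|_X}
= \frac{1}{\|T^{-1}\|_{Y\to X}}
= \frac{1}{\|(T')^{-1}\|_{X'\to Y}}
= \inf_{0\ne v\in Y}\frac{\|T'v\|_{X'}}{\|v\|_Y}
\;\ge\; C_1,
\]
which is exactly $\|w\|_E\ge C_1\|w\|_X$.

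Finally, for every $z_h\in X_h$ one chains
\[
C_1\|x-x_h\|_X \le \|x-x_h\|_E \le \|x-z_h\|_E \le C_2\|x-z_h\|_X,
\]
and the infimum over $z_h\in X_h$ yields $\|x-x_h\|_X\le (C_2/C_1)\inf_{z_h\in X_h}\|x-z_h\|_X$. (The isomorphism property of $T$, restricted also to $X_h$, simultaneously gives existence and uniqueness of $x$ and $x_h$, should one want it.) The only genuinely nontrivial ingredient I expect to spend care on is the duality argument in the third paragraph — passing from the lower bound for $T'$ to the isomorphism property of $T$ and the exact transfer of the constant $C_1$ from $T'$ to $T$; everything else is bookkeeping.
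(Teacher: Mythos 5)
Your argument is correct, and it is essentially the proof behind the cited result (the paper itself defers to Theorem~2.1 of the reference rather than reproving it): the DPG solution is the best approximation in the energy norm $\|w\|_E=\|Tw\|_Y$, and the hypothesis on the optimal test norm is transferred to the equivalence $C_1\|w\|_X\le\|w\|_E\le C_2\|w\|_X$ by a duality argument that preserves the constants exactly. The only variation is in how you obtain the lower bound: you pass through the closed range theorem and the identity $\|(T')^{-1}\|=\|T^{-1}\|$ (legitimate here, since $X$ is assumed reflexive and $Y$ is Hilbert), whereas the cited proof organizes the same duality as the exact identity $\sup_{v\in Y} b(w,v)/\optnorm{v}{Y}=\|w\|_X$; both routes transfer $C_1$ and $C_2$ without loss, so your constant $C_2/C_1$ matches the stated bound.
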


\subsection{The optimal test norms of both methods}   \label{ssec:optimal}

We now see what is the norm defined by~\eqref{eq:opt} in the context
of the first and second DPG methods. From the structure of the
bilinear form it is easy to see that, for the first DPG method, the
optimal test norm is
\begin{align}
\label{opt_norm1}
\Vert(\tau,v,q)\Vert_{\text{opt},V}^2
& = \sup_{0\neq (\sigma,u,\hat{u},\hat{\sigma}_{n})\in U}
\dfrac{
b((\sigma,u,\hat{u},\hat{\sigma}_{n}),(\tau,v,q))^2}{\Vert(\sigma,u,\hat{u},\hat{\sigma}_{n})\Vert_{U}^2}
\\\nonumber
& 
=\Vert A\tau+\nabla v+q\Vert^{2}_{\Omega_{h}}+\Vert\dive\tau\Vert^{2}_{\Omega_{h}}
+\Vert[\tau\,n]\Vert_{\partial\Omega_{h}}^{2}
+\Vert[v n]\Vert_{\partial\Omega_{h}}^{2},
\end{align}
where  the ``jump'' terms are defined by 
\begin{subequations}
\begin{align}
\label{jump_norm_Hdiv}
\Vert[\tau\,n]\Vert_{\partial\Omega_{h}}& :=\sup_{0\neq u\in H^{1}_{0}(\Omega;\mathbb{V})}
\dfrac{\langle u,\tau\,n\rangle_{\partial\Omega_{h}}}{\Vert u\Vert_{H^{1}(\Omega)}}\\
\label{jump_norm_H1}
\Vert[v n]\Vert_{\partial\Omega_{h}}& :=\sup_{0\neq\sigma\in H(\text{div},\Omega;\mathbb{M})}
\dfrac{\langle v,\sigma\,n\rangle_{\partial\Omega_{h}}}{\Vert\sigma\Vert_{H(\text{div},\Omega)}}.
\end{align}
\end{subequations}
Similarly, for the second DPG method, taking the supremum over its
trial space, we have 
\begin{align}
\label{opt_norm2}
\optnn{(\tau,v,q,\beta)}^2
& 
=\Vert A\tau+\nabla v+q +Q_{0}^{-1}\beta AI \Vert^{2}_{\Omega_{h}}+\Vert\dive\tau\Vert^{2}_{\Omega_{h}}\\\nonumber
&\quad +\Vert[\tau\,n]\Vert_{\partial\Omega_{h}}^{2}
+\Vert[v n]\Vert_{\partial\Omega_{h}}^{2}
+\left| Q_{0}^{-1}\int_{\Omega}\trace (A\tau) \right|^{2}.
\end{align}
In either case, the optimal norms are inconvenient for practical
computations, due to the last two jump terms. These terms would make
the trial-to-test computation in~\eqref{eq:T} non-local.  Therefore, a
fundamental ingredient in our ensuing analysis is the
proof of equivalence of the optimal norm with the simpler ``broken''
norm (for the first DPG method)
\begin{equation}
  \label{eq:Vnorm}
  \| \, (\tau, v, q) \, \|_{V}^2 
  =  \| \tau \|_{\Hdiv\oh}^2 + \| v \|_{H^{1}(\oh)}^2+\Vert q\Vert_{\Omega}^{2}
\end{equation}
which does contain the jump terms. 
For the second DPG method, we will similarly prove
that~\eqref{opt_norm2} is equivalent to following norm
\begin{equation}
  \label{eq:Vnorm2}
  \| \, (\tau, v, q, \beta) \, \|_{V^{(2)}}^2 
  =  \| \tau \|_{\Hdiv\oh}^2 + \| v \|_{H^{1}(\oh)}^2
  +\Vert q\Vert_{\Omega}^{2}+\vert\beta\vert^{2}.
\end{equation}
These equivalences would verify condition~\eqref{eq:equiv}, so we
would be in a position to apply Theorem~\ref{thm:abs}.  Before we
proceed to prove these, let us verify the other
condition~\eqref{eq:inj}. We begin with a necessary preliminary.


\subsection{Korn inequalities}

We will need two well-known inequalities due to Korn. The {\em first}
Korn inequality asserts the existence of a constant $C>0$ such that
\begin{subequations}
  \begin{equation}
    \label{Korn_ineq1}
    \Vert v\Vert_{H^{1}(\om)}
    \leq C\Vert\veps(v)\Vert_{\om}\quad\forall v\in H_{0}^{1}(\om),
  \end{equation}
while the {\em second} Korn inequality gives a constant $C>0$ such that 
\begin{equation}
 \label{Korn_ineq2}
 \Vert v\Vert_{H^{1}(\om)}
 \leq C\left(\Vert v\Vert_{\om}+\Vert\veps(v)\Vert_{\om}\right)\quad
 \forall v\in H^{1}(\om).
\end{equation}
\end{subequations}
Above and in the remainder, we use $C$ to denote a generic constant
whose value, although possibly different at different occurrences,
will remain independent of the discrete approximation spaces.  These
inequalities can be found in many references. E.g.,
for~(\ref{Korn_ineq1}), see~\cite[eq.~(2.7)]{Horgan:1995:KornIneq},
and for~(\ref{Korn_ineq2}), see~\cite[Section 1.12 in Chapter
6]{MH:1993:MFE}.

\subsection{Uniqueness for the second DPG method}

In this subsection, we verify condition~\eqref{eq:inj} for the second DPG method.
 
\begin{lemma}
\label{lemma_uniqueness2}
With $U^{(2)}$ and $V^{(2)}$ as set in~\eqref{eq:dpg2}
suppose $(\sigma,u,\hat{u},\hat{\sigma}_{n},\alpha)\in U^{(2)}$ satisfies
\begin{equation}
\label{eq1_lemma_uniqueness2}
b^{(2)}((\sigma,u,\hat{u},\hat{\sigma}_{n},\alpha),(\tau,v,q,\beta))=
0,
\qquad\quad \forall (\tau,v,q,\beta)\in V^{(2)}.
\end{equation}
Then $(\sigma,u,\hat{u},\hat{\sigma}_{n},\alpha)=0$.
\end{lemma}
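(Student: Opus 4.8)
The plan is the standard ``localize, then globalize'' strategy for DPG uniqueness: I would reduce \eqref{eq1_lemma_uniqueness2} to the homogeneous strong elasticity problem on $\om$ and then invoke its well-posedness. Recall that $Q_{0}>0$ under the standing assumptions on $A$.

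First I would test \eqref{eq1_lemma_uniqueness2} with functions supported in a single element $K\in\oh$. Taking $v=0$, $q=0$, $\beta=0$ and $\tau$ compactly supported in $K$ kills the boundary term and, using the self-adjointness of $A$, leaves $(A\sigma+Q_{0}^{-1}\alpha\, AI,\tau)_{K}+(u,\dive\tau)_{K}=0$; integrating by parts, this says that the distributional symmetric gradient of $u$ on $K$ equals $A\sigma+Q_{0}^{-1}\alpha\, AI\in L^{2}(K;\SSS)$. Since also $u\in L^{2}(K;\VVV)$, the regularity statement underpinning the second Korn inequality \eqref{Korn_ineq2} (namely, $u\in L^{2}$ and $\veps(u)\in L^{2}$ imply $u\in H^{1}$) gives $u|_{K}\in H^{1}(K;\VVV)$ with $\veps(u)=A\sigma+Q_{0}^{-1}\alpha\, AI$ on each $K$. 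Analogously, localized tests with $v$ give $\dive\sigma=0$ on each $K$; tests with skew $q$ alone give $\sigma\in L^{2}(\om;\SSS)$; and the test $\beta\in\RRR$ alone gives $\int_{\om}\tr(A\sigma)=0$.

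Next I would upgrade these elementwise facts to global conformity using unrestricted test functions. With general $v\in H^{1}(\oh;\VVV)$ and $\dive\sigma=0$ on each $K$, integration by parts turns \eqref{eq1_lemma_uniqueness2} into $\langle\sigma n-\hat\sigma_{n},v\rangle_{\partial\oh}=0$; as the traces $v|_{\partial K}$ are independent across elements, $\sigma n|_{\partial K}=\hat\sigma_{n}|_{\partial K}$, and since $\hat\sigma_{n}$ is by definition the normal trace of a global $H(\mathrm{div},\om;\MMM)$ field, the normal traces of $\sigma$ match across interfaces, so $\sigma\in\Hdiv{\om;\SSS}$ with $\dive\sigma=0$ in $\om$. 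Likewise, with general $\tau\in H(\mathrm{div},\oh;\SSS)$, the identity $\veps(u)=A\sigma+Q_{0}^{-1}\alpha\, AI$ and the self-adjointness of $A$ make all volume terms cancel, leaving $\langle u-\hat u,\tau n\rangle_{\partial\oh}=0$; since the symmetric normal-trace map $\tau\mapsto\tau n|_{\partial K}$ is onto $H^{-1/2}(\partial K;\VVV)$ (e.g.\ by solving a coercive Neumann elasticity problem on $K$), we get $u|_{\partial K}=\hat u|_{\partial K}$, and because $\hat u$ is the trace of a global $H_{0}^{1}(\om;\VVV)$ field, $u\in H_{0}^{1}(\om;\VVV)$.

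Finally I would conclude. Put $\tilde\sigma=\sigma+Q_{0}^{-1}\alpha I\in\Hdiv{\om;\SSS}$; then $\dive\tilde\sigma=0$, $u\in H_{0}^{1}(\om;\VVV)$, and $A\tilde\sigma=\veps(u)$, so $(\tilde\sigma,u)$ solves the homogeneous elasticity system. Pairing $\dive\tilde\sigma=0$ with $u$ and integrating by parts ($u\in H_{0}^{1}$, $\tilde\sigma$ symmetric) yields $\int_{\om}\tilde\sigma:A\tilde\sigma=0$, hence $\tilde\sigma=0$ by the positive-definiteness of $A$; then $\veps(u)=0$ and the first Korn inequality \eqref{Korn_ineq1} forces $u=0$. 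Therefore $\sigma=-Q_{0}^{-1}\alpha I$, and $0=\int_{\om}\tr(A\sigma)=-Q_{0}^{-1}\alpha\int_{\om}\tr(A(x)I)\,dx$ with $\int_{\om}\tr(A(x)I)\,dx\ge|\om|\,Q_{0}>0$, so $\alpha=0$, whence $\sigma=0$ and then $\hat u=u|_{\partial\oh}=0$, $\hat\sigma_{n}=\sigma n|_{\partial\oh}=0$. I expect the one genuinely delicate ingredient to be the elementwise $H^{1}$-regularity of $u$ in the first step: upgrading ``$u\in L^{2}$, $\veps(u)\in L^{2}$'' to ``$u\in H^{1}$'' rests on the Lions--Ne\v{c}as lemma (often absorbed into the statement of the second Korn inequality); the surjectivity of the symmetric normal-trace operator used in the second step is standard but should be cited explicitly.
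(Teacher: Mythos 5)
Your proposal is correct, and it follows the same localize--globalize skeleton as the paper's proof (elementwise strong form via compactly supported tests, Korn~\eqref{Korn_ineq2} to get $u\in H^1(K;\VVV)$, identification of $\hat\sigma_n$ and $\hat u$ with traces of $\sigma$ and $u$, then an energy identity plus Korn~\eqref{Korn_ineq1}). It deviates in two places, both legitimately. First, to get $u|_{\partial K}=\hat u|_{\partial K}$ the paper constructs an explicit face-supported symmetric tensor with a cut-off, as in~\eqref{eq8_lemma_uniqueness2} (this is where the piecewise planar faces are convenient), whereas you invoke surjectivity of the symmetric normal trace $\tau\mapsto\tau n$ from $\Hdiv{K;\SSS}$ onto $H^{-1/2}(\partial K;\VVV)$; your route is cleaner and shape-robust, but the surjectivity does need the justification you sketch (solve $(\veps(w),\veps(w'))_K+(w,w')_K=\langle g,w'\rangle_{\partial K}$, coercive by Korn~\eqref{Korn_ineq2}, and set $\tau=\veps(w)$), since it is not quoted in the paper. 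Second, the order in which $\alpha$ is eliminated differs: the paper first proves $\alpha=0$ by summing the $\tau=I$ test over $\oh$ (using $u=\hat u$ on $\partial\oh$ and the constraint~\eqref{eq4_lemma_uniqueness2}) and only then runs the energy argument on $\sigma$; you instead absorb $\alpha$ into $\tilde\sigma=\sigma+Q_0^{-1}\alpha I$, kill $\tilde\sigma$ and $u$ by the energy identity, and recover $\alpha=0$ at the end from $\int_\om\tr(A\sigma)=0$ together with $\tr(A(x)I)\ge Q_0>0$. That last inequality is exactly the point you rightly flag: the lemma is not restricted to isotropic materials, so $Q_0>0$ must be deduced from the standing uniform positive definiteness of the extended $A$ (which gives $\tr(A(x)I)=A(x)I:I\ge c\,N$ a.e.), a one-line remark worth adding. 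Your note that the elementwise upgrade ``$u\in L^2$, $\veps(u)\in L^2$ $\Rightarrow$ $u\in H^1(K)$'' rests on the Lions--Ne\v{c}as regularity behind~\eqref{Korn_ineq2} matches the paper's (implicit) use of the same fact.
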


\begin{proof}
Equation (\ref{eq1_lemma_uniqueness2}) is the same as 
\begin{subequations}
\label{eq:14}
\begin{align}
\label{eq2_lemma_uniqueness2}
(A\sigma,\tau)_{K}+(u,\dive\tau)_{K}
-\langle\hat{u},\tau\,n\rangle_{\partial K} +Q_{0}^{-1}(\alpha AI,\tau)_{K}&=0
&& 
\forall\tau\in H(\text{div},K;\mathbb{S})\\
\label{eq3_lemma_uniqueness2}
(\sigma,\grad v)_{K}-\langle\hat{\sigma}_{n},v\rangle_{\partial K}
&=0
&&\forall v\in H^{1}(K;\mathbb{V})\\
\label{eq4_lemma_uniqueness2}
\int_{\Omega}\trace (A\sigma)=0.
\end{align}
\end{subequations}
Here, $\sigma\in L^{2}(\Omega_{h};\mathbb{S})$ because 
\begin{equation*}
(\sigma,q)_{\Omega}=0\quad\forall q\in L^{2}(\Omega_{h};\mathbb{K}).
\end{equation*}
We take $\tau\in \DD(K;\mathbb{S})$ and $v\in \DD (K;\mathbb{V})$
arbitrarily. Here, as usual, $\DD(D,\XXX)$ denotes the space of
infinitely smooth functions from $D$ into $\XXX$ that are compactly
supported in $D$.  Then we have
\begin{subequations}
\begin{align}
\label{eq5_lemma_uniqueness2}
A\sigma-\veps(u)+Q_{0}^{-1}\alpha AI=0\quad\text{ in }K\\
\label{eq6_lemma_uniqueness2}
\dive\sigma=0\quad\text{ in }K
\end{align}
\end{subequations}
in the sense of distributions, for every $K\in\Omega_{h}$. 
In particular, this implies that 
$u\in H^{1}(K;\mathbb{V})$ by ((\ref{eq5_lemma_uniqueness2}) and 
the second Korn inequality (\ref{Korn_ineq2})) 
and $\sigma\in H(\text{div},K;\mathbb{S})$ by (\ref{eq6_lemma_uniqueness2}).

Now we claim that we also have 
\begin{equation}
\label{eq7_lemma_uniqueness2}
\hat{\sigma}_{n}|_{\partial K}=\sigma\,n|_{\partial K}
\qquad\text{and}\qquad u|_{\partial K}=\hat{u}|_{\partial K}.
\end{equation}
The first identity in (\ref{eq7_lemma_uniqueness2}) is obtained by
integrating (\ref{eq3_lemma_uniqueness2}) by parts and using
(\ref{eq6_lemma_uniqueness2}) to find that $\langle
v,\hat{\sigma}_{n}-\sigma\,n\rangle _{1/2,\partial K}=0$ for every
$v\in H^{1}(\Omega_{h};\mathbb{V})$. Note that this identity 
implies that $\sigma\in H(\text{div},\Omega;\mathbb{S})$.  

To prove the second identity of~(\ref{eq7_lemma_uniqueness2}), we need
to proceed a bit differently.  Let $F$ be an arbitrary face of $\d K$,
say in three dimensions (the two dimensional case is similar and
simpler).  We will now show that given any $\eta \equiv (\eta_i) \in
\DD(F; \VVV)$, there is a $\tau \in H(\text{div},K;\SSS)$ such that
$\tau n \in L^2(\d K;\mathbb{V})$ is supported only on $F \subseteq \d
K$ such that $\tau n|_F = \eta$. To this end, we may, without loss of
generality, assume that $F$ is contained in the $xy$-plane, so that $n
= (0,0,1)'$. Then, set
\begin{equation}
\label{eq8_lemma_uniqueness2}
  \tau = \chi (x,y,z)
  \begin{bmatrix}
    0           & 0          & \eta_1(x,y) \\
    0           & 0          & \eta_2(x,y) \\
    \eta_1(x,y) & \eta_2(x,y) & \eta_3(x,y) 
 \end{bmatrix}
\end{equation}
where $\chi$ is an infinitely smooth cut-off function such that
(i)~the support of $\chi(x,y,0)$ is compactly contained in $F$ and
contains the support of $\eta$, and (ii)~$\chi \equiv 1$ on the
support of~$\eta$.  Clearly we can find such a cut-off function, and
furthermore, construct it so that $\tau n$ vanishes on all other faces
of $K$.

We use such $\tau$ to prove the second one in
(\ref{eq7_lemma_uniqueness2}).  First observe that
from~\eqref{eq1_lemma_uniqueness2}, we have
$(A\sigma,\tau)_{K}+(u,\dive\tau)_{K}
-\langle\hat{u},\tau\,n\rangle_{\partial K}+Q_0^{-1}(\alpha AI,\tau)_{K}=0$,
for all $\tau \in H(\text{div},K;\mathbb{S})$. Integrating by parts
(which is permissible since by~\eqref{eq:14}, $u\in H^1(K;\mathbb{V})$
and $\sigma \in \Hdiv{K;\SSS}$), and
using~\eqref{eq2_lemma_uniqueness2},
\begin{equation}
  \label{eq9_lemma_uniqueness2}
  \ip{ u - \hat u, \tau n}_{\d K} =0,\qquad
  \forall \tau \in H(\text{div},K;\SSS).
\end{equation}
Choosing $\tau$ as in~\eqref{eq8_lemma_uniqueness2}, this implies
\[
\int_F (u - \hat u) \;\eta \,\; ds= 0,\qquad
  \forall \eta \in \DD(F,\VVV).
\]
Hence $u|_F = \hat u|_F$ in $L^2(F)$ and this holds for all faces of
$\d K$. This proves the second identity
of~(\ref{eq7_lemma_uniqueness2}), which implies that $u\in
H^{1}_{0}(\Omega;\mathbb{V})$ (after also noting that
$\hat{u}|_{\partial\Omega}=0$).

Next, we choose $\tau=I$ on $\Omega$ and sum the terms
in~(\ref{eq2_lemma_uniqueness2}) over all $K\in\Omega_{h}$. Using the
fact that $u|_{\partial K}=\hat{u}|_{\partial K}$, we have that
\begin{equation*}
\int_{\Omega}\trace (A\sigma)+Q_{0}^{-1}\alpha\int_{\Omega}\trace (AI)=0.
\end{equation*} 
The first term vanishes due to~(\ref{eq4_lemma_uniqueness2}). Hence we
have shown that $\alpha=0$.

Now, choose $\tau=\sigma$ and $v=u$ in
(\ref{eq2_lemma_uniqueness2})--(\ref{eq3_lemma_uniqueness2}).  Summing up
these equations and canceling terms after integrating by parts, we
find that
\begin{equation}
\label{eq10_lemma_uniqueness2}
(A\sigma,\sigma)_{\Omega_{h}}+\langle u,\sigma\,n\rangle_{\partial\Omega_{h}}
-\langle\hat{u},\sigma\,n\rangle_{\partial\Omega_{h}}
-\langle u,\hat{\sigma}_{n}\rangle_{\partial\Omega_{h}}=0.
\end{equation}
Since $u\in H^{1}_{0}(\Omega;\mathbb{V})$ the last term on the left
had side vanishes. Furthermore, since we already showed that the
interelement jumps of $\sigma n$ are zero, the penultimate term
$\langle\hat{u},\sigma\,n\rangle_{\partial\Omega_{h}}$ also
vanishes. Note finally that $\ip{u,\sigma n}_{\partial\oh} =
\ip{u,\sigma n}_{\partial\om} =0$ as $u \in
H_0^1(\om)$. 
Thus, (\ref{eq10_lemma_uniqueness2}) 
implies that $\sigma=0$. 

Since $\sigma$ vanishes, by (\ref{eq5_lemma_uniqueness2}), we have that
$\veps (u)=0$. Since $u\in
H^{1}_{0}(\Omega;\mathbb{V})$, by the first Korn
inequality~\eqref{Korn_ineq1} we find that $u=0$. Since both $\sigma$ and $u$ vanish, by
(\ref{eq7_lemma_uniqueness2}), $\hat{u}$ and $\hat{\sigma}_{n}$ also
vanish.
\end{proof}

\subsection{An inf-sup condition}

In this subsection, we verify that the lower bound in the
condition~\eqref{eq:equiv} holds for the second DPG method. The lower
bound is the same as an inf-sup condition due to the definition of the
optimal norm. To prove this inf-sup condition, we use a modification
of the mixed method for linear elasticity with weakly imposed symmetry,
given in Appendix~\ref{sec:weakly-symm}.

\begin{lemma} \label{lem:lower_bound} 
  There is a positive constant $C_{1}$ such that for any
  $(\tau,v,q,\beta)\in V^{(2)}$,
\begin{equation*}
C_{1}\Vert (\tau,v,q,\beta)\Vert_{V^{(2)}}
\leq \Vert (\tau,v,q,\beta)\Vert_{\mathrm{opt},V^{(2)}}.
\end{equation*}
If, in addition, Assumption~\ref{asm:iso} holds, then the constant
$C_{1}$ can be chosen to be
\begin{equation}
\label{uniform_lower_bound}
C_{1}^{-1} = \bar{c}_{2}P_{0}^{-1}(\Vert A\Vert+B)^{2}B^{4}(\Vert A\Vert+P_{0}+1)^{2},
\end{equation}
where $\bar{c}_{2}$ is a positive constant independent of $A$.
\end{lemma}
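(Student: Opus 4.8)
By the definition~\eqref{eq:opt} of the optimal norm as a supremum over the trial space $\UU$, the asserted inequality amounts to bounding each summand of $\|(\tau,v,q,\beta)\|_{\VV}$ by a fixed multiple of $\optnn{(\tau,v,q,\beta)}$; equivalently, it suffices to produce, for each $(\tau,v,q,\beta)\in\VV$, a realizer $\Uc\in\UU$ with $\|\Uc\|_{\UU}\lesssim\|(\tau,v,q,\beta)\|_{\VV}$ and $\bb(\Uc,(\tau,v,q,\beta))\gtrsim\|(\tau,v,q,\beta)\|_{\VV}^{2}$. Put $r:=A\tau+\grad v+q+Q_{0}^{-1}\beta AI$ (computed element by element) and $m:=|Q_{0}^{-1}\int_{\om}\trace(A\tau)|$, so that by~\eqref{opt_norm2} each of $\|r\|_{\oh}$, $\|\dive\tau\|_{\oh}$, $\jmpn{\tau n}$, $\jmpn{v n}$ and $m$ is $\le\optnn{(\tau,v,q,\beta)}$. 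The term $\|\dive\tau\|_{\oh}$ is already one of these; moreover, once $\|\tau\|_{\om}$, $\|q\|_{\om}$, $|\beta|$ are controlled, so is $\|\grad v\|_{\oh}=\|\,r-A\tau-q-Q_{0}^{-1}\beta AI\,\|_{\oh}\lesssim\optnn{(\tau,v,q,\beta)}+\|A\|\,\|\tau\|_{\om}+\|q\|_{\om}+B|\beta|$. Hence it is enough to estimate $\|\tau\|_{\om}$, $\|v\|_{\om}$, $\|q\|_{\om}$ and $|\beta|$.

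\emph{Reduction to conforming test functions.} Using a bounded right inverse of the jump operator on the broken spaces (as in~\cite{DemkoGopal:DPGanl}), decompose $\tau=\tau^{c}+\tau^{nc}$ and $v=v^{c}+v^{nc}$ with $\tau^{c}\in\Hdiv{\om;\SSS}$, $v^{c}\in H^{1}_{0}(\om;\VVV)$, the correction $\tau^{nc}$ chosen pointwise trace-free, and $\|\tau^{nc}\|_{\Hdiv\oh}\lesssim\jmpn{\tau n}$, $\|v^{nc}\|_{H^{1}(\oh)}\lesssim\jmpn{v n}$. Since $\tau^{nc}$ is trace-free, $\trace(A\tau^{nc})=0$ by Assumption~\ref{asm:iso}; hence $Q_{0}^{-1}\int_{\om}\trace(A\tau^{c})=Q_{0}^{-1}\int_{\om}\trace(A\tau)$, while $r^{c}:=A\tau^{c}+\grad v^{c}+q+Q_{0}^{-1}\beta AI=r-A\tau^{nc}-\grad v^{nc}$ and $\dive\tau^{c}=\dive\tau-\dive\tau^{nc}$ satisfy $\|r^{c}\|_{\om}\lesssim\optnn{(\tau,v,q,\beta)}$ and $\|\dive\tau^{c}\|_{\om}\lesssim\optnn{(\tau,v,q,\beta)}$. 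Because $\|\tau\|_{\om}\le\|\tau^{c}\|_{\om}+\|\tau^{nc}\|_{\oh}$ and likewise for $v$, everything reduces to the \emph{conforming} estimate
\[
\|\tau^{c}\|_{\Hdiv\om}+\|v^{c}\|_{H^{1}(\om)}+\|q\|_{\om}+|\beta|\;\lesssim\;\|r^{c}\|_{\om}+\|\dive\tau^{c}\|_{\om}+m
\]
for $\tau^{c}\in\Hdiv{\om;\SSS}$, $v^{c}\in H^{1}_{0}(\om;\VVV)$, $q\in L^{2}(\om;\KKK)$, $\beta\in\RRR$.

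\emph{The conforming estimate.} To localize where the difficulty lies, note that in the conforming setting several quantities are slaved algebraically: the skew part of $r^{c}$ is $q+\skw\grad v^{c}$ (as $A\tau^{c}$ and $AI$ are symmetric), so $\|q\|_{\om}\le\|r^{c}\|_{\om}+\|\grad v^{c}\|_{\om}$; the symmetric part gives $\veps(v^{c})=\sym r^{c}-A\tau^{c}-Q_{0}^{-1}\beta QI$, whence by the first Korn inequality~\eqref{Korn_ineq1} (constant depending only on $\om$) $\|v^{c}\|_{H^{1}(\om)}\lesssim\|r^{c}\|_{\om}+\|A\|\,\|\tau^{c}\|_{\om}+B|\beta|$; and integrating $\trace r^{c}$ over $\om$, using $\int_{\om}\dive v^{c}=0$, $Q_{0}^{-1}\int_{\om}\trace(AI)\ge|\om|$ (from~\eqref{Q0_def}) and $|\int_{\om}\trace(A\tau^{c})|\lesssim\|A\|\,\|\tau^{c}\|_{\om}$, one gets $|\beta|\lesssim\|r^{c}\|_{\om}+\|A\|\,\|\tau^{c}\|_{\om}$. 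Thus $\|q\|_{\om}$, $\|\grad v^{c}\|_{\om}$, $\|v^{c}\|_{H^{1}(\om)}$ and $|\beta|$ are all $\lesssim(1+B)(\|r^{c}\|_{\om}+\|A\|\,\|\tau^{c}\|_{\om})$, and the whole estimate reduces to bounding $\|\tau^{c}\|_{\om}$. Its deviatoric part is elementary: $(A\tau^{c})_{D}=P\tau^{c}_{D}=(r^{c})_{D}-(\grad v^{c})_{D}-q$ gives $\|\tau^{c}_{D}\|_{\om}\le P_{0}^{-1}(\|r^{c}\|_{\om}+\|\grad v^{c}\|_{\om}+\|q\|_{\om})$. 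The bulk part $\trace\tau^{c}$ is the delicate one: the only algebraic identity available, $Q\,\trace\tau^{c}=\trace(\sym r^{c}-Q_{0}^{-1}\beta QI)-\dive v^{c}$, carries the dangerous factor $Q^{-1}\le NQ_{0}^{-1}$. A bound on $\|\tau^{c}\|_{\om}$ \emph{independent of $Q_{0}$} — and in fact the whole conforming estimate above — is obtained instead from the robust stability of the (modified) mixed method for linear elasticity with weakly imposed symmetry proved in Appendix~\ref{sec:weakly-symm}: $(\tau^{c},v^{c})$, with vanishing rotation multiplier, solves the corresponding mixed system with data read off from $\sym r^{c}$, $\dive\tau^{c}$ and the scalar $Q_{0}^{-1}\int_{\om}\trace(A\tau^{c})$, and that result — whose $Q_{0}$-robustness rests precisely on the $P$--$Q$ splitting of Assumption~\ref{asm:iso} and on isolating the one-dimensional hydrostatic mode through the scalar equation (the reason the second DPG method carries the extra scalar unknown) — supplies the claimed bound, any residual lower-order terms in $\|v^{c}\|_{H^{1}(\om)}$ and $\|q\|_{\om}$ being absorbed by the previous inequalities.

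\emph{Assembly and constants.} Combining the conforming estimate with the reduction gives $\|(\tau,v,q,\beta)\|_{\VV}\lesssim\optnn{(\tau,v,q,\beta)}$. For a general symmetric positive definite $A$ this already yields some $C_{1}$ (the well-posedness of the auxiliary mixed problem being available without quantification, with $\|A^{-1}\|$ playing the role of $P_{0}^{-1}$). Under Assumption~\ref{asm:iso} one repeats the argument keeping all constants: $P_{0}^{-1}$ and the powers of $\|A\|$ enter through the deviatoric inversion and the continuity/coercivity constants inside the mixed-method bound, the powers of $B$ and the factor $(\|A\|+P_{0}+1)^{2}$ through the bulk--deviatoric couplings (the terms $Q_{0}^{-1}\beta QI$ and the absorbed remainders above), and the whole constant is independent of $Q_{0}$ because every large factor $Q_{0}^{-1}$ is compensated by a $Q_{0}$: $Q_{0}^{-1}\|Q\|_{L^{\infty}(\om)}=B$, $\;Q_{0}m=|\int_{\om}\trace(A\tau)|\lesssim\|A\|\,\|\tau\|_{\om}$, and $Q_{0}^{-1}\int_{\om}\trace(AI)\ge|\om|$. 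The genuine difficulty, and the step whose constant dominates~\eqref{uniform_lower_bound}, is precisely the $Q_{0}$-uniform inf--sup estimate for the modified weakly-symmetric mixed method in Appendix~\ref{sec:weakly-symm}: a naive control of the bulk stress $\trace\tau^{c}$ loses a factor $Q_{0}^{-1}$, and recovering uniformity requires a Stokes-type argument controlling $\trace\tau^{c}$ through the divergences of test functions, with the scalar equation removing the otherwise undetermined hydrostatic component.
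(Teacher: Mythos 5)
Your overall strategy is viable and genuinely different from the paper's. The paper never decomposes the broken test function: it solves the auxiliary mixed problem of Appendix~\ref{sec:weakly-symm} with the test data $(\tau,-v,q,N^{-1}\beta I/|\Omega|)$ as right-hand side, writes $\|\tau\|^2+\|v\|^2+\|q\|^2+|\beta|^2$ as a pairing with that solution, and integrates by parts elementwise so that the jump norms $\jmpn{\tau n}$, $\jmpn{vn}$ appear directly, after which Cauchy--Schwarz and the stability bound~\eqref{bound_balanced_dmf} finish the job and the constant~\eqref{uniform_lower_bound} drops out of explicit bookkeeping. You instead reduce to conforming test functions and then apply Theorem~\ref{thm:balanced_dmf} ``forward'', recognizing $(\tau^{c},v^{c})$ as the exact solution of~\eqref{eq:bal} for data built from the residual $r^{c}$, $\dive\tau^{c}$ and the trace constraint. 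Both proofs ultimately rest on the same Appendix theorem, and your forward use of it is legitimate once the conforming pair really does satisfy~\eqref{eq:bal}.

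However, there are concrete gaps. First, the decomposition you invoke is not available as cited: \cite{DemkoGopal:DPGanl} (or an elementary Riesz-representation construction) gives a conforming approximation in $\Hdiv{\om;\MMM}$ with correction bounded by $\jmpn{\tau n}$, but your version additionally demands $\tau^{c}\in\Hdiv{\om;\SSS}$ \emph{and} a pointwise trace-free correction. The symmetric variant can be rescued (take the correction of the form $\veps(\phi)$ with $\phi\in H^{1}_{0}(\om;\VVV)$ solving an elasticity-type Riesz problem, paying a Korn constant), but the resulting correction is not trace-free, and trace-freeness is exactly what you use to claim $Q_{0}^{-1}\int_{\om}\trace(A\tau^{c})=Q_{0}^{-1}\int_{\om}\trace(A\tau)$; without it you must estimate $Q_{0}^{-1}\int_{\om}\trace(A\tau^{nc})$, which costs a factor $B$ and must be accounted for. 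Second, the identification of $(\tau^{c},v^{c})$ ``with vanishing rotation multiplier'' as a solution of the mixed system is wrong as stated: matching the sign conventions ($+\grad v$ in the DPG form versus $-\grad u$ in~\eqref{eq:bal}) forces the displacement component $-v^{c}$ and the rotation multiplier $\rho=q$ (plus $F_{3}=\skw\tau^{c}$ if you drop symmetry of $\tau^{c}$); with $\rho=0$ equation~\eqref{eq1_balanced_dmf} is not satisfied, so the stability bound cannot be applied as you describe. Third, the explicit constant~\eqref{uniform_lower_bound} is part of the lemma, and your final paragraph only narrates where factors of $P_{0}^{-1}$, $\|A\|$ and $B$ ``enter''; an actual derivation (as in the paper: $C_{1}^{-1}=c_{2}(2C_{0}+c_{1})$ with $c_{1}=2(\|A\|+B)C_{0}$ and $C_{0}$ from~\eqref{uniform_bound_dmf}) is missing, and in your route the extra Korn and $B$ factors from the decomposition would have to be tracked to confirm the stated form.
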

\begin{proof}
  Given $(\tau,v,q,\beta)\in V^{(2)}$, we solve the mixed
  method~\eqref{eq:bal} with data $F_1 = \tau, F_2=-v, F_3 = q$ and
  $F_4 = N^{-1}\beta I/|\Omega|$, to get $(\sigma,u,\rho,a)\in
  H(\text{div},\Omega;\mathbb{M}) \times L^2(\Omega;\mathbb{V}) \times
  L^{2}(\Omega;\mathbb{K}) \times \mathbb{R}$.  According to
  Theorem~\ref{thm:balanced_dmf}, the component $u$ is in $H_0^1(\Omega;
  \VVV)$ and
  \begin{equation}
    \label{bound1_lower_bound}
    \Vert\sigma\Vert_{H(\text{div},\Omega)}+\Vert u\Vert_{H^{1}(\Omega)}+\Vert\rho\Vert_{\Omega}+\vert a\vert
    \leq C_{0} (\Vert\tau\Vert_{\Omega}+\Vert v\Vert_{\Omega}+\Vert q\Vert_{\Omega}+\vert \beta\vert).
  \end{equation}

  Within an element $K$, we can find the strong form of the equations
  in~\eqref{eq:bal} by choosing infinitely smooth test functions that
  are compactly  supported on $K$. We obtain
\begin{subequations}
\begin{align}
\label{eq5_lower_bound}
 A\sigma-\nabla u+\rho+aQ_{0}^{-1}AI & =\tau,\\
\label{eq6_lower_bound}
 \dive\sigma& =-v,\\
\label{eq7_lower_bound}
\skw\sigma& =q, \intertext{together with the last
  equation~\eqref{eq4_balanced_dmf} which can be restated simply as}
\label{eq8_lower_bound}
 Q_{0}^{-1}\int_{\Omega}\trace (A\sigma)& =\beta.
\end{align}
\end{subequations}
These, together with integration by parts, imply  that 
\begin{align*}
\Vert\tau\Vert_{\Omega}^{2}
& +\Vert v\Vert_{\Omega}^{2}+\Vert q\Vert_{\Omega}^{2} +\vert\beta\vert^{2}\\
&=(A\sigma-\nabla u+\rho+aQ_{0}^{-1}AI,\tau)_{\Omega}
-(\dive\sigma,v)_{\Omega}+(\skw\sigma,q)_{\Omega}+\beta Q_{0}^{-1}\int_{\Omega}\trace (A\sigma)
\\
&=(\sigma,A\tau)_{\Omega_{h}}+(u,\dive\tau)_{\Omega_{h}}-\langle u,\tau n\rangle_{\partial\Omega_{h}}
+(\rho,\tau)_{\Omega}+aQ_{0}^{-1}\int_{\Omega}\trace (A\tau)
\\
&\qquad\qquad +(\sigma,\nabla v)_{\Omega_{h}}-\langle v,\sigma n\rangle_{\partial\Omega_{h}}
+(\skw\sigma,q)_{\Omega}+(\sigma,\beta Q_{0}^{-1}AI)_{\Omega_{h}}
\\
 &=(\sigma,A\tau)_{\Omega_{h}}+(u,\dive\tau)_{\Omega_{h}}-\langle u,\tau n\rangle_{\partial\Omega_{h}}
 +aQ_{0}^{-1}\int_{\Omega}\trace (A\tau)
 \\
&\qquad\qquad +(\sigma,\nabla v)_{\Omega_{h}}-\langle v,\sigma n\rangle_{\partial\Omega_{h}}
 +(\sigma,q)_{\Omega}+(\sigma,\beta Q_{0}^{-1}AI)_{\Omega_{h}}
\intertext{since $\tau$ is symmetric. Rearranging and applying Cauchy-Schwarz inequality,}
\Vert\tau\Vert_{\Omega}^{2}
& +\Vert v\Vert_{\Omega}^{2}+\Vert q\Vert_{\Omega}^{2} +\vert\beta\vert^{2}
\\
& =(\sigma,A\tau+\nabla v+q+\beta Q_{0}^{-1}AI)_{\Omega_h}+(u,\dive\tau)_{\Omega_{h}}
+aQ_{0}^{-1}\int_{\Omega}\trace (A\tau)\\
&\qquad\qquad -\langle u,\tau n\rangle_{\partial\Omega_{h}}-\langle v,\sigma n\rangle_{\partial\Omega_{h}}
\\
&\le
\Vert\sigma\Vert_{\Omega}\Vert A\tau+\nabla v+q+\beta Q_{0}^{-1}AI\Vert_{\Omega_{h}}
+\Vert u\Vert_{\Omega}\Vert\dive\tau\Vert_{\Omega_{h}}+\vert a\vert\cdot\vert Q_{0}^{-1}\int_{\Omega}\trace (A\tau)\vert\\
&\qquad\qquad +\Vert[\tau\,n]\Vert_{\partial\Omega_{h}}\Vert u\Vert_{H^{1}(\Omega)} 
+\Vert[v n]\Vert_{\partial\Omega_{h}}\Vert\sigma\Vert_{H(\text{div},\Omega)}
\\
&\leq
2 \optnn{ (\tau,v,q,\beta)}
\big(\Vert \sigma\Vert_{H(\text{div},\Omega)}^{2}
+\Vert u\Vert_{H^{1}(\Omega)}^{2}+\Vert \rho\Vert_{\Omega}^{2}+\vert a\vert^{2}\big)^{1/2}
\end{align*}
By (\ref{bound1_lower_bound}), we have that
\begin{equation}
\label{bound2_lower_bound}
\Vert\tau\Vert_{\Omega}^{2}+\Vert v\Vert_{\Omega}^{2}+\Vert q\Vert_{\Omega}^{2}+\vert\beta\vert^{2}
\leq 4C_{0}^2 \optnn{ (\tau,v,q,\beta) }^2.
\end{equation}
Furthermore, 
since 
$\Vert A\tau+\nabla v+q+\beta Q_{0}^{-1}AI\Vert_{\Omega_{h}}\leq
\optnn{  (\tau,v,q,\beta)}$, by triangle inequality,
\[
\| \nabla v \|_{\Omega_h}
\le 
\| A \| \|\tau \|_\om + \| q\|_\om + \| A  I\|Q_0^{-1} | \beta|
 + 
\optnn{ (\tau,v,q,\beta) }.
\]
which implies that 
\begin{equation*}
\Vert \nabla v\Vert_{\Omega_{h}}\leq c_1 \Vert (\tau,v,q,\beta)\Vert_{opt,V^{(2)}}.
\end{equation*}
for a positive constant $c_1$.

If, in addition the material is isotropic in the sense of
Assumption~\ref{asm:iso}, then using the notations of the
assumption, the constant $c_1$ can be chosen to be
\begin{equation*}
c_{1} = 2(\Vert A\Vert+B)C_{0}.
\end{equation*}

Finally, since $\Vert\dive\tau\Vert_{\Omega_{h}}\leq \optnn{
  (\tau,v,q,\beta)}$, we can control all terms in forming the norm
$\Vert\dive\tau\Vert_{\Omega_{h}}$, i.e.,
\begin{equation*}
\Vert (\tau,v,q,\beta)\Vert_{V^{(2)}}\leq c_{2}(2C_{0}+c_{1})\Vert (\tau,v,q,\beta)\Vert_{opt,V^{(2)}}
\end{equation*}
with a constant $c_{2}$ is independent of $A$. The lemma follows with
$C_{1}^{-1}=c_{2}(2C_{0}+c_{1})$. 
In the case of isotropic material, observe that 
\begin{align*}
& C_{1}^{-1}=c_{2}(2C_{0}+c_{1})=2c_{2}(\Vert A\Vert+B+1)C_{0}\leq 4c_{2}(\Vert A\Vert+B)C_{0}\\
&\quad \leq 4c_{2}(\Vert A\Vert+B)\bar{c}_{1}P_{0}^{-1}B^{4}(\Vert A\Vert+P_{0}+1)^{2}(\Vert A\Vert+B)\\
&\quad\leq\bar{c}_{2}P_{0}^{-1}(\Vert A\Vert+B)^{2}B^{4}(\Vert A\Vert+P_{0}+1)^{2},
\end{align*}
where $\bar{c}_{2}$ is a positive constant independent of $A$.
\end{proof}

\subsection{Upper bound} 

Now we show that the upper inequality of condition~\eqref{eq:equiv} can be verified for the second DPG method.

\begin{lemma}
\label{upper_bound}
There is a positive constant $C_{2}$ such that for any $(\tau,v,q,\beta)\in V^{(2)}$,
\begin{equation*}
\optnn{ (\tau,v,q,\beta)}
\leq \bar{c}_{3}(\Vert A\Vert + B)\Vert (\tau,v,q,\beta)\Vert_{V^{(2)}}.
\end{equation*}
Here, $\bar{c}_{3}$ is a positive constant independent of $A$.
\end{lemma}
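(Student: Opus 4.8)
The plan is to estimate each of the five contributions to $\optnn{(\tau,v,q,\beta)}^2$ in~\eqref{opt_norm2} separately by $\Vert(\tau,v,q,\beta)\Vert_{V^{(2)}}^2$, isolating the dependence on $\Vert A\Vert$ and $B$ (so Assumption~\ref{asm:iso} is taken to be in force, $B$ being the quantity~\eqref{eq:B}), and then to collect the bounds. Three of the five terms carry no $A$-dependence. The divergence term obeys $\Vert\dive\tau\Vert_{\Omega_h}\le\Vert\tau\Vert_{H(\text{div},\Omega_h)}$ by definition. For the jump seminorm $\Vert[\tau n]\Vert_{\partial\Omega_h}$ of~\eqref{jump_norm_Hdiv}, the point is that its defining supremum runs over $u\in H_0^1(\Omega;\VVV)$, which is \emph{globally} conforming; hence integrating by parts on each $K\in\Omega_h$ and summing gives $\langle u,\tau n\rangle_{\partial\Omega_h}=(\tau,\grad u)_{\Omega_h}+(\dive\tau,u)_{\Omega_h}$, and grouping $\tau$ with $\dive\tau$ and $u$ with $\grad u$ in Cauchy--Schwarz yields $\Vert[\tau n]\Vert_{\partial\Omega_h}\le\Vert\tau\Vert_{H(\text{div},\Omega_h)}$. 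Symmetrically, the supremum in~\eqref{jump_norm_H1} runs over the globally conforming space $\Hdiv{\Omega;\MMM}$, so element-wise integration by parts gives $\langle v,\sigma n\rangle_{\partial\Omega_h}=(\sigma,\grad v)_{\Omega_h}+(\dive\sigma,v)_{\Omega_h}$ and hence $\Vert[v n]\Vert_{\partial\Omega_h}\le\Vert v\Vert_{H^1(\Omega_h)}$.

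For the two remaining terms, Assumption~\ref{asm:iso} is essential. Applying the triangle inequality to the first term gives $\Vert A\tau+\grad v+q+Q_0^{-1}\beta AI\Vert_{\Omega_h}\le\Vert A\Vert\,\Vert\tau\Vert_\Omega+\Vert\grad v\Vert_{\Omega_h}+\Vert q\Vert_\Omega+Q_0^{-1}\vert\beta\vert\,\Vert AI\Vert_\Omega$, and the crucial observation is that taking $\tau=I$ in~\eqref{PQ_decomp} (so $I_D=0$) gives $AI=QI$; therefore $\Vert AI\Vert_\Omega=\sqrt N\,\Vert Q\Vert_{L^2(\Omega)}\le\sqrt N\,\vert\Omega\vert^{1/2}\Vert Q\Vert_{L^\infty(\Omega)}$ and $Q_0^{-1}\Vert AI\Vert_\Omega\le\sqrt N\,\vert\Omega\vert^{1/2}B$. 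For the last term, the same substitution gives $\trace(A\tau)=Q\,\trace\tau$, so using $\vert\trace\tau\vert\le\sqrt N\,\vert\tau\vert$ pointwise, $\left|Q_0^{-1}\int_\Omega\trace(A\tau)\right|\le Q_0^{-1}\Vert Q\Vert_{L^\infty(\Omega)}\int_\Omega\vert\trace\tau\vert\le\sqrt N\,\vert\Omega\vert^{1/2}B\,\Vert\tau\Vert_\Omega$.

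Squaring the five bounds and adding them, each term on the right is dominated by a constant depending only on $N$ and $\vert\Omega\vert$ times $(\Vert A\Vert^2+B^2+1)\,\Vert(\tau,v,q,\beta)\Vert_{V^{(2)}}^2$; and since $B\ge1$ — recall from Assumption~\ref{asm:iso} that $Q\ge Q_0$, whence $B=Q_0^{-1}\Vert Q\Vert_{L^\infty(\Omega)}\ge1$ — we have $\Vert A\Vert^2+B^2+1\le3(\Vert A\Vert+B)^2$, so the claim follows with $\bar c_3$ depending only on $N$ and $\vert\Omega\vert$, in particular independent of $A$ (and of $Q_0$). I do not expect a genuine obstacle here; the only points requiring care are the two integration-by-parts arguments for the jump seminorms, which hinge on the global conformity of the test functions appearing in~\eqref{jump_norm_Hdiv}--\eqref{jump_norm_H1} so that the broken element-boundary pairings reassemble into honest integrals over $\Omega$, and the bookkeeping in the two $A$-dependent terms, where one must use the explicit isotropic identities $AI=QI$ and $\trace(A\tau)=Q\,\trace\tau$ rather than the crude estimate $\Vert AI\Vert\le\sqrt N\,\Vert A\Vert$, which would leave an uncontrolled factor $Q_0^{-1}$ and destroy the robustness.
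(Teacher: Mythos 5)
Your proposal is correct and follows essentially the same route as the paper: the two jump seminorms are bounded exactly as in the paper's proof (element-wise integration by parts against the globally conforming test functions, then Cauchy--Schwarz), and the remaining terms — which the paper dismisses as ``straightforward'' — are handled by the triangle inequality together with the isotropic identities $AI=QI$ and $\trace(A\tau)=Q\trace\tau$, which is precisely what is needed to obtain the constant $\bar c_3(\Vert A\Vert+B)$ with $\bar c_3$ independent of $A$.
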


\begin{proof}
  Let us first prove an upper bound for the jump terms. Integrating by
  parts locally and applying Cauchy-Schwarz inequality,
\[
\begin{aligned}
\jmpn{ \tau\,n}
& = \sup_{ w \in H_0^1(\om;\mathbb{V})}   \frac{ \ip{ w, \tau \,n }_{\partial\oh} }
{ \; \| w \|_{H^{1}(\om)} }
= 
 \sup_{ w \in H_0^1(\om;\mathbb{V})}   \frac{ (\grad w,\tau)_{\oh} +  (w, \dive \tau)_\oh}
{ \; \| w \|_{H^{1}(\om)} }
\\
 &\le \| \tau \|_{\Hdiv\oh}.
\end{aligned}
\]
We use a similar argument for the other jump, i.e.,
\[
\begin{aligned}
\jmpn{ v n}
& = \sup_{ \varsigma \in \Hdiv{\om;\MMM} }
\frac{ \!\! \ip{ v, \varsigma \,n }_{\partial\oh} }
{ \;\;\; \| \varsigma \|_{\Hdiv{\om;\MMM}} }
= 
\sup_{ \varsigma \in \Hdiv{\om;\MMM} }
\frac{ (\grad v,\varsigma)_{\oh} +  (v, \dive \varsigma)_\oh}
{ \;\;\; \| \varsigma \|_{\Hdiv{\om;\MMM}} }
\\
& 
\le \| v \|_{H^1(\oh)}.
\end{aligned}
\]
The remainder of the proof is straightforward.
\end{proof}

\subsection{Proof of Theorem~\ref{thm:dpg1}}

We apply the abstract result of Theorem~\ref{thm:abs}.
Assumption~\eqref{eq:inj} is verified by
Lemma~\ref{lemma_uniqueness2}.  The lower inequality
of~\eqref{eq:equiv} is verified by Lemma~\ref{lem:lower_bound}, and
the upper inequality is verified by
Lemma~\ref{upper_bound}. \hfill$\Box$

\subsection{Proof of Theorem~\ref{thm:dpg2}}

The analysis of the first DPG method is in many ways simpler than the
above detailed analysis of the second DPG method. Just as in the proof
of Theorem~\ref{thm:dpg1}, we only need to verify the
conditions~\eqref{eq:inj} and~\eqref{eq:equiv} of the abstract result.

The proof of the uniqueness condition~\eqref{eq:inj} is similar and
simpler than the proof of Lemma~\ref{lemma_uniqueness2}, so we omit
it.

The proof of the upper inequality in condition~\eqref{eq:equiv} is the
same the proof of Lemma~\ref{upper_bound}.

The proof of the lower inequality in condition~\eqref{eq:equiv} for
the first DPG method is analogous and simpler than the proof of
Lemma~\ref{lem:lower_bound}. To highlight the main difference, instead
of considering the mixed method in Appendix~\ref{sec:weakly-symm}, we
now need only use the standard mixed method with weakly imposed stress
symmetry. In other words, the analogue of~\eqref{bound1_lower_bound}
is now obtained as follows: Given $(\tau,v,q)\in V$, we solve the
following variational problem to find $(\sigma,u,\rho)\in
H(\text{div},\Omega;\mathbb{M}) \times L^{2}(\Omega;\mathbb{V}) \times
L^{2}(\Omega;\mathbb{K})$ satisfying
\begin{subequations}
\begin{align}
\label{eq1_lower_bound_simple}
 (A\sigma,\delta\tau)_{\Omega}+(u,\dive\delta\tau)_{\Omega}+(\rho,\delta\tau)_{\Omega}
& =(\tau,\delta\tau)_{\Omega} &&
 \forall\delta\tau\in H(\text{div},\Omega;\mathbb{M}),
\\
\label{eq2_lower_bound_simple}
(\dive\sigma,\delta v)_{\Omega}
& =-(v,\delta v)_{\Omega} && \forall\delta v\in L^{2}(\Omega;\mathbb{V}),
\\
\label{eq3_lower_bound_simple}
(\sigma,\delta q)_{\Omega}
& =(q,\delta q)_{\Omega} &&  \forall \delta q\in L^{2}(\Omega;\mathbb{K}).
\end{align}
\end{subequations}
Then we use the standard stability estimate~\cite{ArnolFalkWinth07}
for this method to get the analogue of~\eqref{bound1_lower_bound} and
proceed as in the proof of Lemma~\ref{lem:lower_bound}.\hfill$\Box$

\section{Examples of trial spaces and convergence rates}   \label{sec:rates}

The trial subspaces of both the first and the second DPG methods
(namely $U_h$ and $U_{h}^{(2)}$) were unspecified in
Theorem~\ref{thm:dpg1} and theorem~\ref{thm:dpg2}.  This section is
devoted to two examples of trial spaces and how one can use
Theorems~\ref{thm:dpg1} and~\ref{thm:dpg2} to predict $h$ and
$p$~convergence rates for these examples.  The examples we have in
mind are DG spaces built on a tetrahedral mesh and a cubic mesh.  We
only consider the case of the first DPG method (as the same
convergence rates can be derived analogously for the second DPG
method).

If $D$ is a simplex, let $P_p(D)$ denote the set of functions that are
restrictions of (multivariate) polynomials of degree at most $p$ on a
domain $D$. If $D$ is cubic, then we write it as a tensor product of
three intervals $D = D_x \otimes D_y \otimes D_z$ and define
$Q^{p,q,r}(D) = P_{p}(D_x)\otimes P_{q}(D_y)\otimes P_{r}(D_z)$ which
is the space of polynomials of degree at most $p,q,r$ with respect to
$x,y,z$, resp. As with Sobolev spaces, when these notations may also
be augmented with a range vector space, i.e., $P_p(D;\mathbb{S})$
denotes the space of symmetric matrix-valued functions whose
components are polynomials of degree at most~$p$, etc.


Recall that -- see~\eqref{eq:Uh} -- to specify $U_h$, we must specify its
four component spaces. If $\Omega_{h}$ is a tetrahedral mesh, we set
\begin{subequations} \label{eq:spaces}
\begin{align}
  \Sigma_{h,p} & = \{ \rho: \rho|_K \in P_p(K;\mathbb{S}) \}, &
  W_{h,p} & = \{ v : v|_K  \in P_p(K;\mathbb{V}) \},
\intertext{while if $\Omega_{h}$ is a cubic mesh, then we set}
 \Sigma_{h,p} & = \{ \rho: \rho|_K \in Q^{p,p,p}(K;\mathbb{S}) \}, &
  W_{h,p} & = \{ v : v|_K  \in Q^{p,p,p}(K;\mathbb{V}) \}. &
\end{align}
Note that we have chosen the subspaces to consist of {\em symmetric}
matrix polynomials. This is clearly allowed since the only requirement
for the discrete trial subspace was that $U_h \subset U$. (The
corresponding automatically generated test space ensures stability of
the resulting method. We emphasize that this stabilization mechanism
is different from mixed methods.)  The numerical flux space is set as
follows:
\begin{align}
F_{h,p}
  & =  \{ \eta : \;\eta|_E \in P_p(E;\mathbb{V}) \; \forall \text{ mesh faces }E \}
  && \text{ if }\Omega_{h}\text{ is a tetrahedral mesh,}
\\
F_{h,p}
  & =  \{ \eta : \;\eta|_E \in Q^{p,p}(E;\mathbb{V}) \; \forall \text{ mesh faces }E \}
  && \text{ if }\Omega_{h}\text{ is a cubic mesh.}  
\end{align}
In either case, we define the numerical trace space by
\begin{align}
M_{h,p+1}
  & = \{ \eta: \exists\, w \in W_{h,p+1} \cap H_0^1(\om;\mathbb{V}) \text{ such that } 
  \eta|_{\d K} = w|_{\d K} 
  \; \forall K \in \oh \}.
\end{align}
\end{subequations}
Since $p\ge 0$, the space $M_{h,p+1}$ is non-trivial.

Let us apply Theorem~\ref{thm:dpg2} with these as trial spaces for
each solution component.  Then, if we know how the best approximation
error converges in terms of $h$ and $p$, we can conclude rates of
convergence. It is well known that for $s>0$,
\begin{equation}
  \label{eq:34}
  \inf_{w_h \in W_{h,p} } 
  \| u - w_h \|_\om \le C h^s\pt^{-s} | u |_{H^s(\om)},
  \qquad (s \le  p+1).
\end{equation}
Here $\pt = \max(p,2)$.
A similar best approximation estimate obviously holds for $\sigma$ as
well.  Note that since the exact stress $\sigma$ is symmetric, it can
be approximated to optimal accuracy by the symmetric subspace
$\Sigma_{h,p}$.

Next, consider the flux and trace best approximations in the quotient
topology defined by~\eqref{trace_norm} and~\eqref{flux_norm}.  Since
the exact trace $\hat u$ is the trace of the exact solution~$u$, and
since the exact flux $\sgn$ is the trace of the normal components
of~$\sigma$ along each interface of $\partial\Omega_{h}$, we have
\begin{align*}
  \inf_{\hat z_h \in M_{h,p}} 
  \| \hat u - \hat z_h \|_{H_0^{1/2}(\partial\oh) }
  & \le  \| u - \gpi u \|_{H^1(\om)},
  \\
  \inf_{\etah \in Q_{h,p}} 
  \| \sgn - \etah \|_{H^{-1/2}(\partial\oh) }
  & \le  \| \sigma - \dpi \sigma \|_{\Hdiv\om},
\end{align*}
where $\gpi u \in H_0^1(\om;\mathbb{V})$ and $\dpi \sigma \in
H(\text{div},\om;\mathbb{M})$ are suitable projections, such that
their traces $\gpi u|_E$ and $\dpi\sigma\,n |_E$ on any mesh face $E$
is in $P_p(E;\mathbb{V})$ or $Q^{p,p}(E;\mathbb{V})$.  These
conforming projectors providing approximation estimates with constants
independent of~$p$ are available from recent works
in~\cite{Demko:2008:PBI,DemkoBuffa:2005:PBI,DemkoGopalSchob:2008:EOP1,DemkoGopalSchob:2009:EOP2,DemkoGopalSchob:2009:EOP3}. 
Specifically, \cite[Theorem~8.1]{DemkoGopalSchob:2009:EOP3}
gives
\begin{subequations}
  \label{eq:33}
 \begin{align}
    \| u - \gpi u \|_{H^1(\om)}
    & \le 
    C \ln(\pt)^2 \, h^s \pt^{-s} 
    | u |_{H^{s+1}(\om)},
    \qquad (s\le p),
    \\
    \label{eq:33-div}
    \| \sigma - \dpi \sigma \|_{L^2(\om)}
    & \le 
    C \ln(\pt)\, h^s \pt^{-s}
    |\sigma |_{H^{s+1}(\om)},
    \qquad (s\le p+1).
 \end{align}
\end{subequations}
whenever $ s>1/2$ for tetrahedral meshes.  The same results for cubic
meshes are available from~\cite{Demko:2008:PBI}.  Note that we have
chosen $\dpi$ to be a projector into the Raviart-Thomas space.  The
projector into the Raviart-Thomas space satisfies $\dive \dpi \sigma =
\varPi_p \dive\sigma$ (where $\varPi_p$ denote the $L^2$-orthogonal
projection into $W_{h,p}$).  Hence,
\begin{equation}
  \label{eq:38}
\| \dive ( \sigma - \dpi \sigma ) \|_{L^2(\om)}
\le C   h^s  \pt^{-s}
    |\dive  \sigma |_{H^s(\om)},
    \qquad (s\le p+1).
\end{equation}
Finally, comparing the rates of convergence in~\eqref{eq:34},
\eqref{eq:33} and~\eqref{eq:38}, we find that to obtain a full
$O(h^{p+1})$ order of convergence, we must increase the polynomial
degree of the numerical trace space to $p+1$. Combining these
observations, we have the following corollary.

\begin{corollary}[$h$ and $p$ convergence rates]
  \label{cor:rates}
  Let $\oh$ be a shape regular mesh (either tetrahedral or cubic) and
  let $h$ denote the maximum of the diameters of its elements.  Let
  $\DE$ and $\DE^{(2)}$ denote the (previously defined) discretization
  errors of the first and second DPG methods, resp.  Using the spaces
  defined in~\eqref{eq:spaces}, set
  \[
  U_h = \Sigma_{h,p} \times W_{h,p} \times M_{h,p+1} \times F_{h,p}
  \]
  for the first DPG method and 
  \[
  U_h^{(2)} = \Sigma_{h,p} \times W_{h,p} \times M_{h,p+1} \times F_{h,p}\times \mathbb{R}
  \]
  for the second DPG method. Then
  \begin{align*}
    \DE
    & \le \,
    C_{I\;}  \ln(\pt)^2 \,h^{s} \pt^{-s} ( \|\sigma \|_{H^{s+1}(\om)} + \| u \|_{H^{s+1}(\om)})    
    \\
    \DE^{(2)}
    & \le \,
    C_{II}  \ln(\pt)^2 \,h^{s} \pt^{-s} ( \|\sigma \|_{H^{s+1}(\om)} + \| u \|_{H^{s+1}(\om)})
  \end{align*}
  for all $1/2 <s\le p+1$. The constants $C_I$ and $C_{II}$ are
  independent of $h$ and $p$, but dependent on the shape regularity
  and $A$.  If Assumption~\ref{asm:iso} (isotropy) holds, then
  $C_{II}$ is independent of $Q_{0}$, so the second estimate does not
  degenerate as the Poisson ratio goes to 0.5.
\end{corollary}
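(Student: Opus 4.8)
The plan is to combine the quasioptimality estimates of Theorems~\ref{thm:dpg1} and~\ref{thm:dpg2} with the component-wise $hp$-approximation results recalled in~\eqref{eq:34}, \eqref{eq:33} and~\eqref{eq:38}. First I would note that the exact solution of the second method has $\alpha=0$, so one may take $\gamma_h=0$ in the infimum defining $\BAE^{(2)}$; the term $|\alpha-\gamma_h|$ then vanishes and $\BAE^{(2)}$ collapses to exactly the same four-term expression as $\BAE$. Hence, by Theorems~\ref{thm:dpg1} and~\ref{thm:dpg2}, it suffices to exhibit, for each of the four unknowns $\sigma,u,\hat u,\sgn$, a single competitor in the corresponding factor of $U_h$ whose approximation error is bounded by $C\ln(\pt)^2\,h^s\pt^{-s}(\|\sigma\|_{H^{s+1}(\om)}+\|u\|_{H^{s+1}(\om)})$ for $1/2<s\le p+1$; multiplying the resulting bound for $\BAE$ by $C^{(1)}$ (resp.\ $\BAE^{(2)}$ by $C^{(2)}$) then yields the two claimed estimates.

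For the volumetric unknowns I would use the $L^2$-best approximations of $\sigma$ in $\Sigma_{h,p}$ and of $u$ in $W_{h,p}$; the former is admissible precisely because the exact $\sigma$ is symmetric, and~\eqref{eq:34} together with its stated analogue for $\sigma$ bounds both errors by $Ch^s\pt^{-s}$ times an $H^s$-seminorm for $s\le p+1$. For the numerical trace I would take $\hat z_h=(\gpi u)|_{\partial\oh}$, where $\gpi u\in W_{h,p+1}\cap H_0^1(\om;\VVV)$ is the conforming projector of~\cite{DemkoGopalSchob:2009:EOP3}; since $\hat u$ is the trace of $u$ and $u-\gpi u\in H_0^1(\om;\VVV)$, the quotient-norm definition~\eqref{trace_norm} gives $\|\hat u-\hat z_h\|_{H_0^{1/2}(\partial\oh)}\le\|u-\gpi u\|_{H^1(\om)}$, which~\eqref{eq:33} --- applied at polynomial degree $p+1$ --- bounds by $C\ln(\pt)^2 h^s\pt^{-s}|u|_{H^{s+1}(\om)}$ for $s\le p+1$. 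This is exactly where the extra degree in $M_{h,p+1}$ is used, so that the trace contribution keeps pace with the others over the full range $s\le p+1$. For the numerical flux I would take $\etah=(\dpi\sigma)\,n|_{\partial\oh}$ with $\dpi\sigma$ the degree-$p$ Raviart--Thomas projector; inserting $q=\sigma-\dpi\sigma\in\Hdiv{\om;\MMM}$ into the infimum~\eqref{flux_norm} gives $\|\sgn-\etah\|_{H^{-1/2}(\partial\oh)}\le\|\sigma-\dpi\sigma\|_{\Hdiv\om}$, and combining the $L^2$ estimate~\eqref{eq:33-div} with the commuting-projector estimate~\eqref{eq:38} for $\dive(\sigma-\dpi\sigma)=\dive\sigma-\varPi_p\dive\sigma$ bounds this by $C\ln(\pt)h^s\pt^{-s}(|\sigma|_{H^{s+1}(\om)}+|\dive\sigma|_{H^s(\om)})$; since $\dive\sigma=f$, the divergence seminorm is harmless.

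Summing the four contributions, the slowest-decaying logarithmic factor that appears is $\ln(\pt)^2$ (from the $H^1$-conforming projection), so $\BAE$ and $\BAE^{(2)}$ are each bounded by $C\ln(\pt)^2 h^s\pt^{-s}(\|\sigma\|_{H^{s+1}(\om)}+\|u\|_{H^{s+1}(\om)})$, and the corollary's estimates follow with $C_I=C^{(1)}C$ and $C_{II}=C^{(2)}C$, independent of $h$ and $p$. Under Assumption~\ref{asm:iso}, $C^{(2)}$ may be taken to be the explicit constant~\eqref{uniform_stability_constant}, which does not involve $Q_0$; hence $C_{II}$ is independent of $Q_0$ and the second estimate does not degenerate as the Poisson ratio approaches $0.5$. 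I do not foresee a genuine obstacle here: the argument is purely an assembly of Theorems~\ref{thm:dpg1} and~\ref{thm:dpg2} with the cited conforming $hp$-projection estimates, and the only points demanding care are the degree bookkeeping among the four factors of $U_h$ (which forces the numerical trace space to carry degree $p+1$) and the correct passage from the quotient trace and flux norms to the approximation errors of the fixed conforming liftings $\gpi u$ and $\dpi\sigma$.
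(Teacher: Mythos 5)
Your proposal is correct and follows essentially the same route as the paper: it combines the quasioptimality of Theorems~\ref{thm:dpg1} and~\ref{thm:dpg2} with the component-wise estimates~\eqref{eq:34}, \eqref{eq:33} and~\eqref{eq:38}, using the conforming projectors $\gpi$ and $\dpi$ to bound the quotient-norm trace and flux errors and the degree-$(p+1)$ trace space to keep all contributions at the full rate. Your explicit handling of the $\alpha$-term in $\BAE^{(2)}$ (taking $\gamma_h=0$) and of the $Q_0$-independence of $C^{(2)}$ matches the paper's reasoning.
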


In the same way, one can derive convergence rates for other element
shapes and spaces from Theorems~\ref{thm:dpg1} and
Theorem~\ref{thm:dpg2}.

\begin{remark}[Symmetric and conforming stresses]
  If an application demands stress approximations $\sigma_h$ that
  are both symmetric and $\dive$-conforming (i.e., if one needs
  $\sigma_h$ to be in the space $\Hdiv{\om;\SSS}$ defined
  in~\eqref{eq:18}), then the DPG method can certainly give such
  approximations. We only need to choose a trial subspace
  \begin{equation}
    \label{eq:1}
      \Sigma_{h,p} \subset    \Hdiv{\om;\SSS}
  \end{equation}
  instead of the choice $\Sigma_{h,p} \subset L^2(\om, \SSS)$ we made
  in~\eqref{eq:spaces} above. Obviously $ \Hdiv{\om;\SSS} \subset
  L^2(\om, \MMM)$, so~\eqref{eq:1}, together with the other component
  spaces as set previously, would result in a trial space $U_h$ that
  is conforming in our ultraweak variational framework.  Hence,
  Theorems~\ref{thm:dpg1} and~\ref{thm:dpg2} continue to apply. Notice
  that stability of the resulting DPG method is ensured even with this
  choice because the method adapts its test space to any given trial
  subspace. The first example of $\Sigma_{h,p}$
  satisfying~\eqref{eq:1} that comes to mind is the finite element
  of~\cite{ArnolAwanoWinth08}. However their space is too rich because
  they had to ensure a discrete inf-sup condition. Since we have
  separated out the stability issue, we have other simpler and
  inexpensive options. E.g., we may choose $\Sigma_{h,p}$ to consist
  of symmetric matrix functions, each of whose entries are in $L_{h,p}
  = \{ \rho \in H^1(\om,\SSS): \rho|_{K} \in P_p(K,\SSS) \}$ (i.e.,
  continuous Lagrange finite element functions). The locality of our
  test space construction is not destroyed with this choice. Moreover,
  $p$-optimal interpolation estimates are known for this space, so we
  can proceed as above to state an analogue of
  Corollary~\ref{cor:rates}. Of course, the same remarks also apply
  for displacement approximations, e.g., we may choose
  $H^1$-conforming subspaces to approximate the displacement.
\end{remark}

\section{Numerical results}   \label{sec:numerical}

In this section, we present numerical results for the first DPG method
using two test cases: a smooth solution on a square domain and a
singular solution on an L-shaped domain. All numerical experiments
were conducted using a pre-existing $hp$-adaptive finite
element package~\cite{hpbook}.

\subsection{Discrete spaces}

Following \cite{DemkoGopal:DPGanl}, we considered a 2D domain $\Omega$
divided into conforming or 1-irregular quadrilateral meshes. Let
$\Omega_h$ denote the collection of mesh elements and $\mathcal{E}_h$
denote the collection of mesh edges. A polynomial degree $p_K \geq 1$
is assigned to each element and a degree $p_E$ is assigned to each
mesh edge $E$. For the first method, the practical trial space is $U_h
= \Sigma_h \times W_h \times M_h \times F_h$ where
\begin{subequations}
\begin{align}
K_h & = \left\{ v : v|_K \in Q_{p_K,p_K}(K), \forall K \in \Omega_h \right\} \\
\label{eq:Sigma}
\Sigma_h & = (K_h)^3  \\
W_h & = (K_h)^2  \\
M_h & = \left\{ \mu : \mu|_E \in P_{p_E+1}(E), \forall E \in \mathcal{E}_h\; \mbox{and} \;\mu \in H_0^1(\Omega)\right\} \\
F_h & = \left\{ \eta : \eta|_E \in P_{p_E}(E), \forall E \in \mathcal{E}_h\right\}
\end{align}  
\end{subequations}
(and, as before, $Q_{l,m}$ is the space of bivariate polynomials which are of
degree at most $l$ in $x$ and $m$ in $y$). Notice that
in~\eqref{eq:Sigma}, we interpret each element of~$(K_h)^3$ as a
symmetric matrix with entries in $K_h$ (so the stress approximations
are strongly symmetric).
The edge order $p_E$ is determined using the maximum
rule, i.e. $p_E$ is the maximum order of all elements adjacent to edge
$E$.

Recall that the test space is determined by the trial-to-test operator
$T:U \mapsto V$, which in turn requires inverting the Riesz map
corresponding to the inner product in the test space. In practice,
this is solved on the discrete level by using $\tilde{T}: U \mapsto
\tilde{V}$ which is defined as
\begin{align}
(\tilde{T}u,\tilde{v})_V = b(u,\tilde{v})\, &  \hspace{0.6in} \forall \; \tilde{v} \in \tilde{V}
\end{align}
where $\tilde{V}$ is a finite dimensional subspace of $V$. For our
implementation, $\tilde{V}$ is defined as
\begin{align}
\tilde{V} &= \left\{(\tau,v) : \tau|_K \in (Q_{\tilde{p}_K,\tilde{p}_K})^3 \; \mbox{and} \; v|_K \in (Q_{\tilde{p}_K,\tilde{p}_K})^2 \right\}
\end{align}
where $\tilde{p}_K = p_K + \delta p$. The default choice for the enrichment degree $\delta p$ is 2. Numerical experience shows that this is a sufficient choice for most problems (see figure \ref{fig:approx}). 

Finally, we approximate the energy norm of the error using the error representation function $\tilde{e} \in \tilde{V}$ where $\tilde{e} = \tilde{T}(u-u_h)$. Note that by the definition of $\tilde{T}$, the error representation function can be computed element wise by solving the variational problem
\begin{align}
(\tilde{e},\tilde{v})_V = (\tilde{T}(u-u_h),\tilde{v})_V = b(u-u_h,\tilde{v}) = l(\tilde{v}) - b(u_h,\tilde{v}).
\end{align}
This implies that the energy norm of the error is approximated by
\begin{align}
\Vert u-u_h \Vert_E & = \sup_{v \in V} \frac{\vert b(u-u_h,v) \vert}{\Vert v \Vert_V} 
 = \Vert T(u-u_h) \Vert_V 
 \approx \Vert \tilde{T}(u-u_h) \Vert_V 
 = \Vert \tilde{e} \Vert_V.
\end{align}
From Theorem \ref{thm:abs}, it is known that the energy error is
equivalent to the standard norm error on $U$, so our choice of error
indicator is justified assuming that the approximation of $T$ by
$\tilde{T}$ is sufficient.

In all cases, the standard test space norm is used for the inversion of the Riesz map, i.e.,
\begin{align}
\label{eq:vvv}
\Vert (\tau,v) \Vert_V^2 &= \Vert \tau \Vert^2_{\Hdiv\oh} + \Vert v \Vert^2_{H^1(\Omega_h)}.
\end{align}
Note that due to the strong symmetry of functions in the discrete
space $\Sigma_h$, the term involving $q$
in~\eqref{binear_form_concrete} vanishes. Hence, at the discrete
level, we may (omit all $q$'s and) work with the reduced test space
$V=H(\text{div},\Omega_{h};\mathbb{S})\times
H^{1}(\Omega_{h};\mathbb{V})$ instead of~\eqref{test_space}. This is
why we use the norm~\eqref{eq:vvv} in place of~\eqref{test_norm}.

\subsection{Test Problems}

\subsubsection{Smooth Solution}
The first problem studied was a smooth solution with manufactured body force found by applying the elasticity equations to the exact displacements
\begin{align}
u_x &= \sin (\pi x) \sin (\pi y) \\
u_y &= \sin (\pi x) \sin( \pi y) 
\end{align}
over a unit square domain $\Omega = (0,1) \times (0,1)$ with $u_x, u_y$ prescribed on $\partial \Omega$. For all cases, $\Omega_h$ is initially a uniform mesh of 4 square elements.

\subsubsection{L-Shaped Steel}

The second problem considered was the classical L-shaped domain
problem with the material properties of steel. In polar coordinates
$r,\theta$ around the re-entrant corner, the singular solution takes
the form (see e.g.,~\cite{Vasil88} or~\cite[\S~4.2]{Grisv92})
\begin{align}
\sigma_{r}  &  =r^{a-1}\left[  F^{^{\prime\prime}}\left(  \theta\right)
+\left(  a+1\right)  F\left(  \theta\right)  \right]  \\
\sigma_{\theta}  &  =a\left(  a+1\right)  r^{a-1}F\left(  \theta\right)  \\
\sigma_{r\theta}  &  =-ar^{a-1}F^{^{\prime}}\left(  \theta\right) \\
u_{r}  &  =\frac{1}{2\mu}r^{a}\left[  -\left(  a+1\right)  F\left(
\theta\right)  +\left(  1-\frac{\nu}{1+\nu}\right)  G^{^{\prime}}\left(
\theta\right)  \right]  \\
u_{\theta}  &  =\frac{1}{2\mu}r^{a}\left[  -F^{^{\prime}}\left(
\theta\right)  +\left(  1-\frac{\nu}{1+\nu}\right)  \left(  a-1\right)
G\left(  \theta\right)  \right] 
\end{align}
where $\nu = \frac{\lambda}{2(\lambda+\mu)}$ is Poisson's ratio and
the functions $F(\theta)$ and $G(\theta)$ are given by
\begin{align}
F\left(  \theta\right) & =C_{1}\sin\left(  a+1\right)  \theta+C_{2}\cos\left(
a+1\right)  \theta+C_{3}\sin\left(  a-1\right)  \theta+C_{4}\cos\left(
a-1\right)  \theta,\\
G\left(  \theta\right) & =\frac{4}{a-1}\left[  -C_{3}\cos\left(  a-1\right)
\theta+C_{4}\sin\left(  a-1\right)  \theta\right].
\end{align}
To determine the constants $C_1,C_2,C_3,C_4$ and $a$, we use the
kinematic boundary conditions along the edges forming the reentrant
corner (which without loss of generality we can take to be the edges
$\theta = \pm 3\pi/4$). We can obtain a square integrable solution
that satisfies $\dive \sigma=0$ by setting 
$C_2 = C_4 = 0$, $C_3=1$ and 
\begin{align}
C_{1}  &  =\frac{\left[  4\left(  1-\frac{\nu}{1+\nu}\right)
-\left(  a+1\right)  \right]  
\sin\left(  (a-1) 
 \frac{3}{4}\pi \right) }{\left(
a+1\right)  \sin\left(  (a+1)  \frac{3}{4}\pi \right)}
\end{align}
and letting $0<a<1$ be the solution of the transcendental equation
\begin{align*}
  C_1\,\cos\left( \frac { 3\left( a+1 \right) \pi} {4}  \right)  
  \left( a +1 \right)
  & + 
  \cos \left(\frac {3  \left( a-1 \right) \pi}{4}  \right)  \left( 
    a-1 \right) 
\\
& + 4\, \left( 1-{\frac {\nu}{1+\nu}} \right) \cos \left( 
\frac {3  \left( a-1 \right) \pi}{4}  \right) =0.
\end{align*}
Numerically solving for $a$ with the material properties of
steel~\cite{FungTong01}, namely $\lambda = 123$ GPa, $\mu = 79.3$ GPa,
we obtain $ a \approx 0.6038.  $ This implies that all stress
components have a singularity of strength (approximately)
$r^{-0.3962}$ while the displacement components are smooth at the
origin (but have singular derivatives). We can thus expect the stress
components to be in (a space close to) $H^{0.6038-\epsilon}$ and
the displacement components in $H^{1.6038-\epsilon}$ for
$\epsilon>0$.

\subsection{Convergence rates}

\begin{figure}
\begin{center}
  \subfigure[Uniform $h$-refinements.]{
    \includegraphics[width=0.47\textwidth,angle=0]{./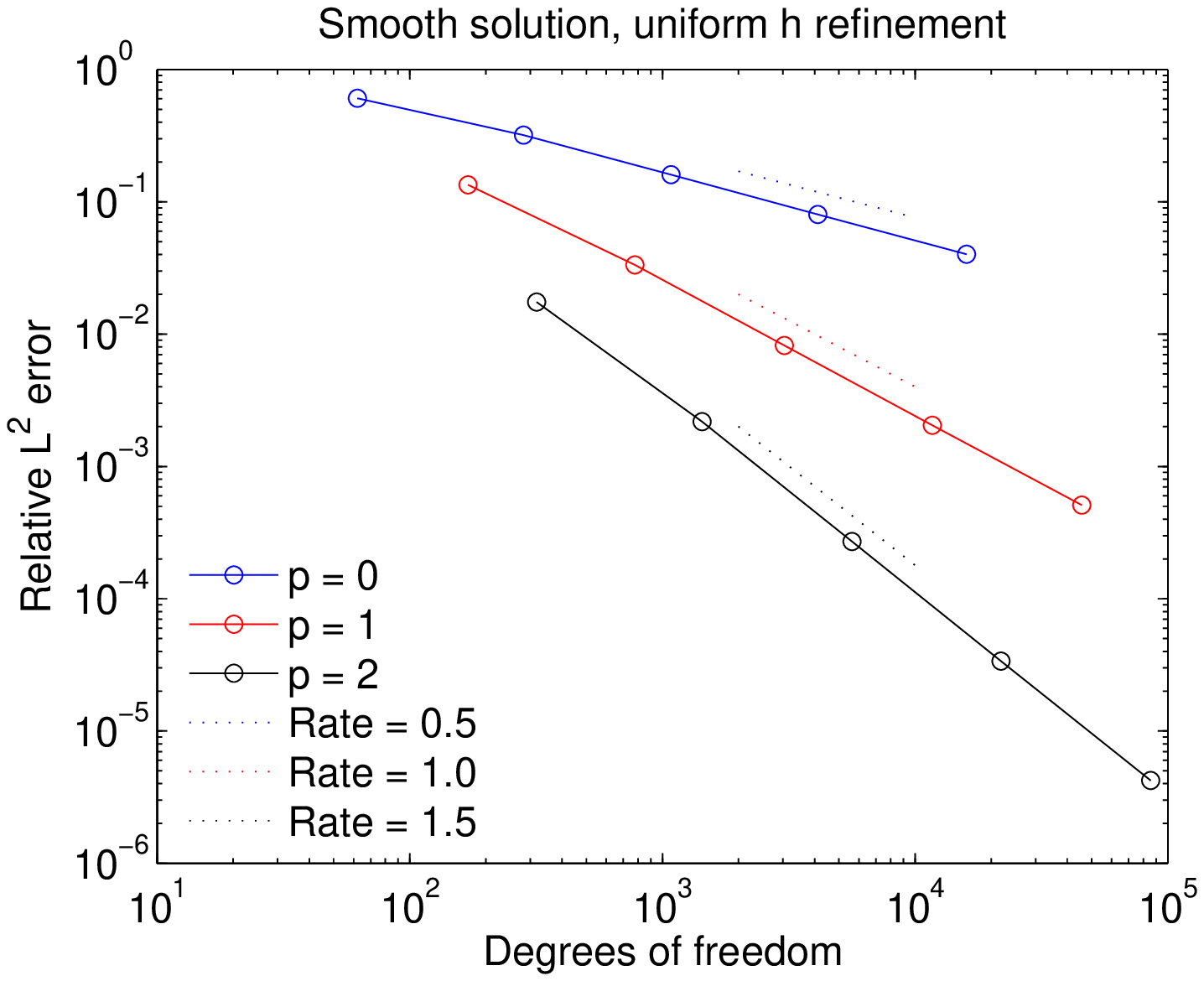}
    \label{fig:urefsmooth:h}
  }
  \subfigure[Uniform $p$-refinements.]{
    \includegraphics[width=0.47\textwidth,angle=0]{./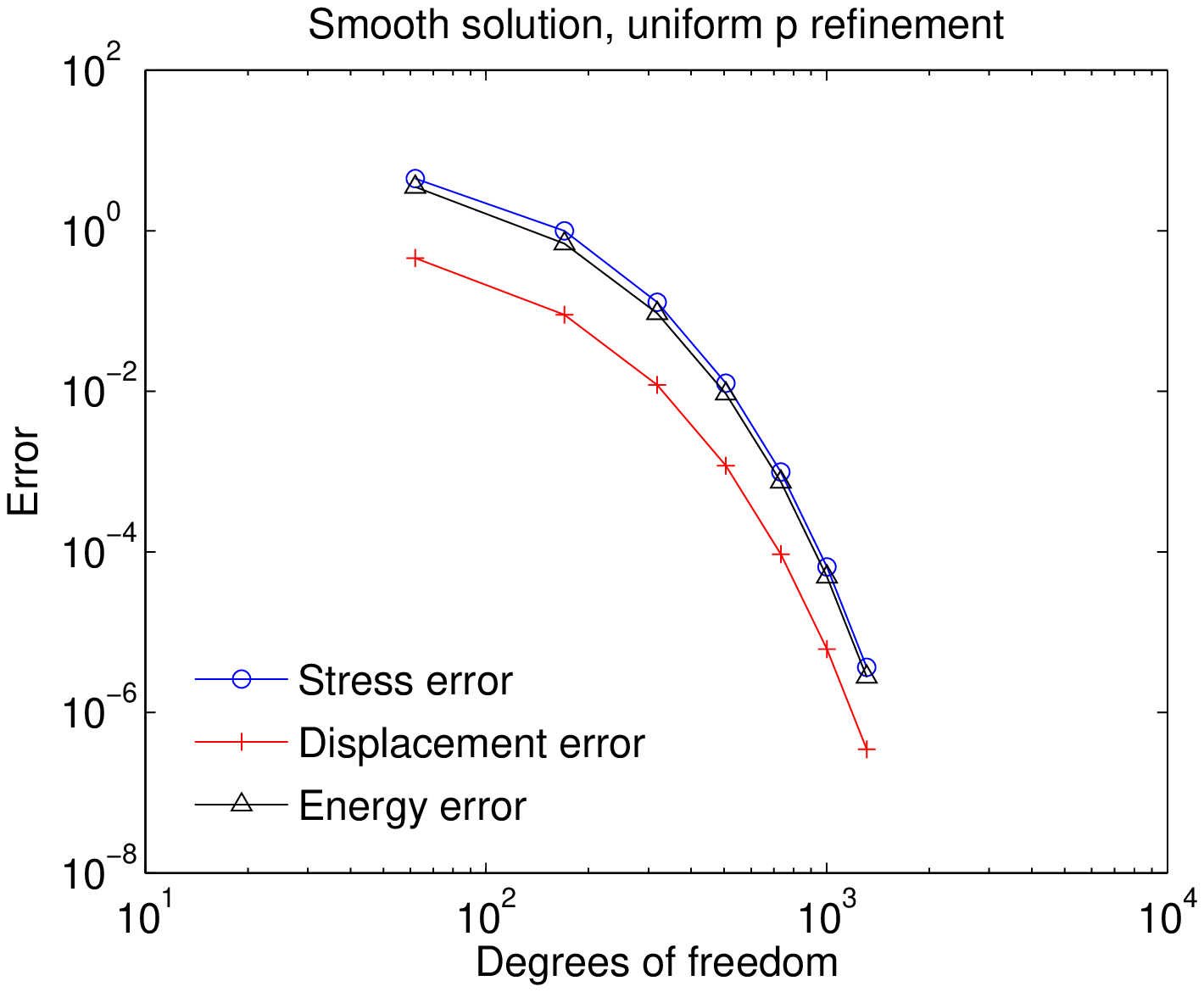}
    \label{fig:urefsmooth:p}
  }
\caption{Uniform refinement strategies for the smooth problem. 
  \label{fig:urefsmooth}}
\end{center}
\end{figure}
\begin{figure}
  \begin{center}
    \subfigure[Uniform h refinements.]{
      \includegraphics[width=0.47\textwidth]{./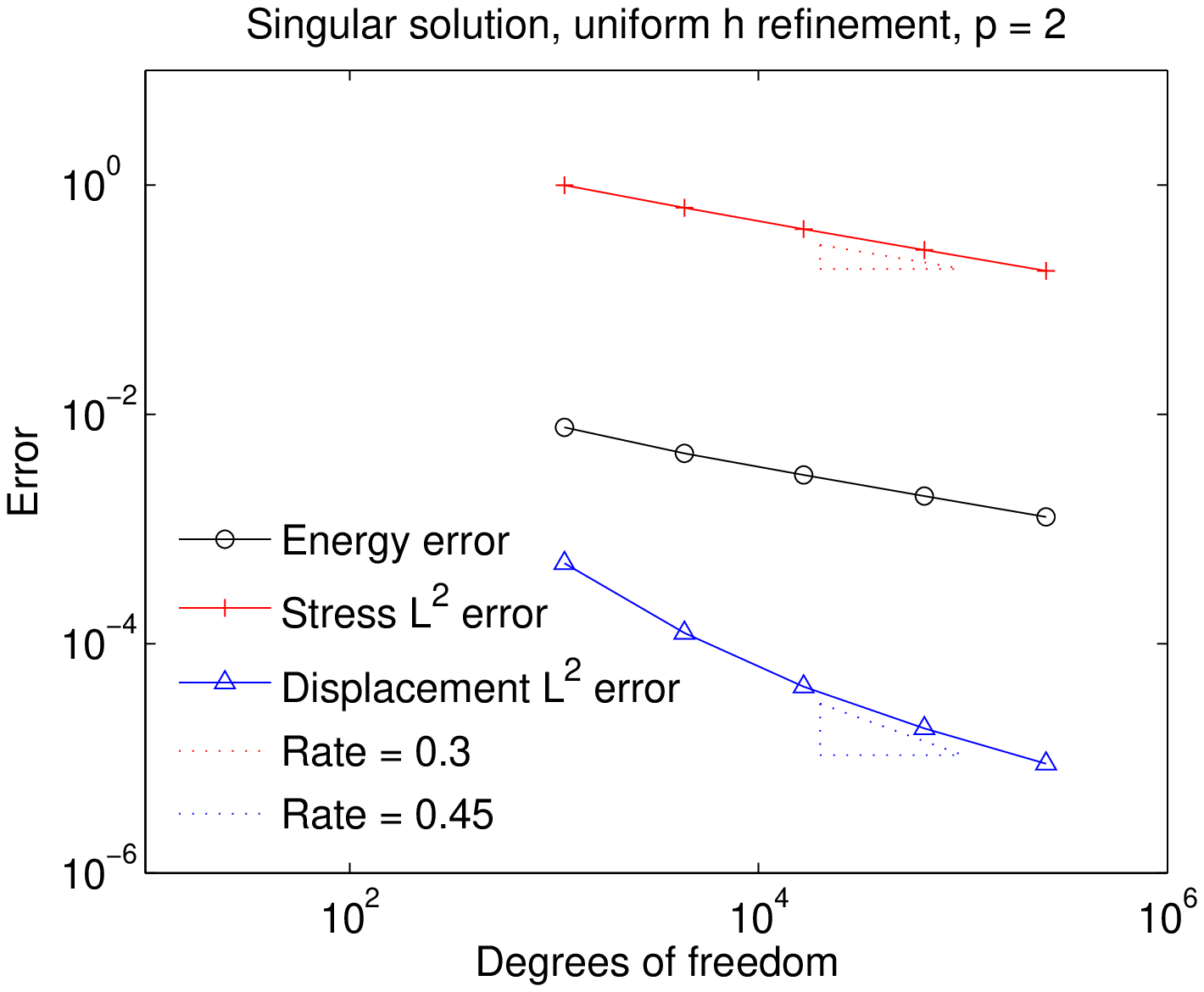}
      \label{fig:ureflshape:h}
    }
    \subfigure[Uniform p refinements.]{
      \includegraphics[width=0.47\textwidth]{./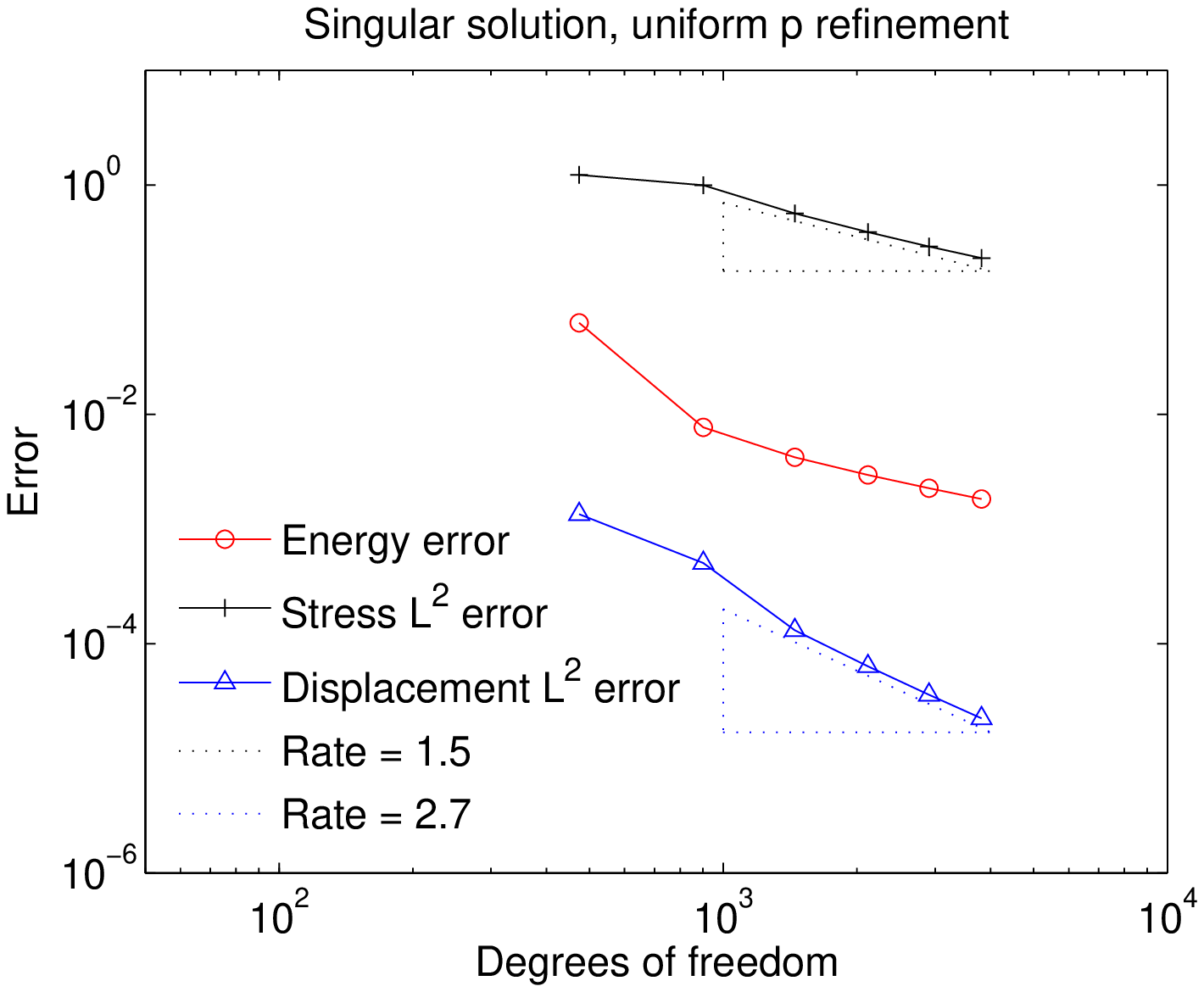}
      \label{fig:ureflshape:p}
    }
    \caption{Uniform refinement strategies for the L-shaped domain. 
      \label{fig:ureflshape}}
  \end{center}
\end{figure}

The observed decrease of the error as the degrees of freedom increase
is shown in Figure~\ref{fig:urefsmooth} for the smooth solution case
and in Figure~\ref{fig:ureflshape} for the L-shaped domain.  Note that
when we report the ``$L^2$ error'', we only consider the $L^2$-norm of
the errors in $u$ and $\sigma$, not the error in numerical fluxes or
traces.  

Consider the case of the smooth solution first. If we perform uniform
$h$-refinements, the number of degrees of freedom ($N$) is
$O(h^{-2})$. From Corollary~\ref{cor:rates}, we expect to see the
error decrease by $O(h^{p+1})$ for the smooth solution case, i.e.,
$O(N^{-(p+1)/2})$ in terms of $N$. This is confirmed in
Figure~\ref{fig:urefsmooth:h}. Also, since both displacement and stress
are infinitely smooth, they converge at the same rate. For uniform
$p$-refinements, exponential convergence is observed in
Figure~\ref{fig:urefsmooth:p}.

In the singular case of the L-shaped domain,
Figure~\ref{fig:ureflshape} shows the observed convergence history for
uniform refinements.  Since the stress variables are in
$H^{0.6038-\epsilon}$, we expect that the best approximation error for
stress should decrease at rate $h^{0.6038}$, or $N^{-0.3019}$. This is
in agreement with Figure~\ref{fig:ureflshape:h}. Additionally, since
displacement is in $H^{1.6038-\epsilon}$, one might think that its
best approximation error should decrease more or less at rate
$h^{1.6038}$, or $N^{-0.8019}$. However in the DPG method the errors
for both these variables are coupled together. So, while we observe
the optimal convergence rate for the stress variable, the convergence
rate for the displacement seems to be somehow limited by the
convergence rate of the stress. For uniform $p$-refinements, because
we are considering a singular solution, the convergence rate is
limited by the regularity of the solution, so unlike
Figure~\ref{fig:urefsmooth:p}, no exponential convergence is observed
in Figure~\ref{fig:ureflshape:p}.

\subsection{Comparison with the weakly symmetric mixed method.}

The smooth solution problem was also implemented using the weakly
symmetric mixed element given in~\cite{ArnolWinth02}. This mixed
method uses polynomials of degree one higher than our DPG method for
the stress trial space. Expectedly therefore, the stress
approximations given by the mixed method were generally observed to be
superior in the $L^2$ norm.  For the displacement however, both
methods use the same space, so it is interesting to compare the
displacement errors. This is done in Figure~\ref{fig:compare}. The DPG
method delivers lower displacement errors in the higher order case. In
the lowest order case (not shown in the figure) the mixed method
performs slightly better.

\begin{figure}
\begin{center}
\subfigure[]{
\includegraphics[width=0.47\textwidth,angle=0]{./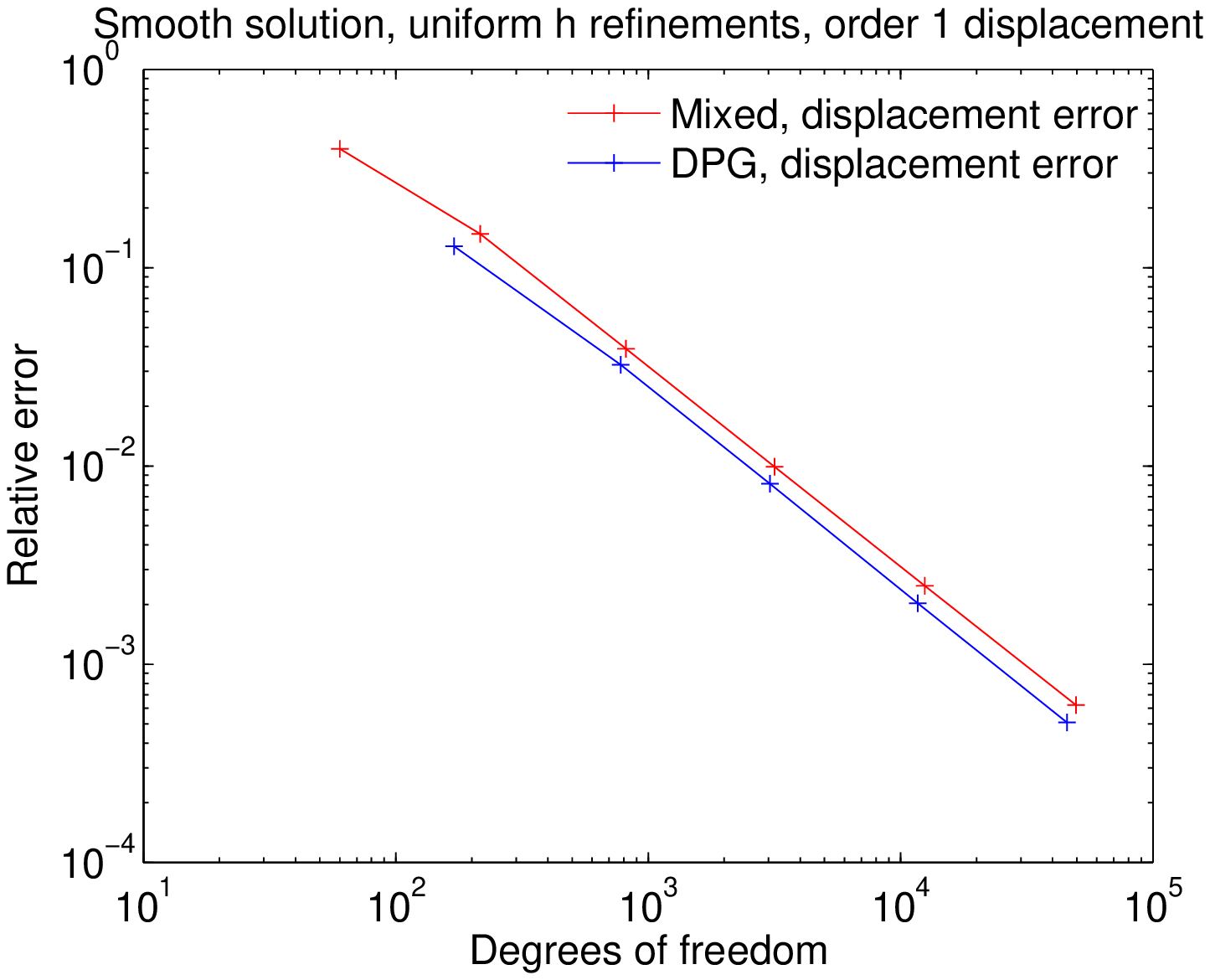}
\label{fig:compare:DPG}
}
\subfigure[]{
\includegraphics[width=0.47\textwidth,angle=0]{./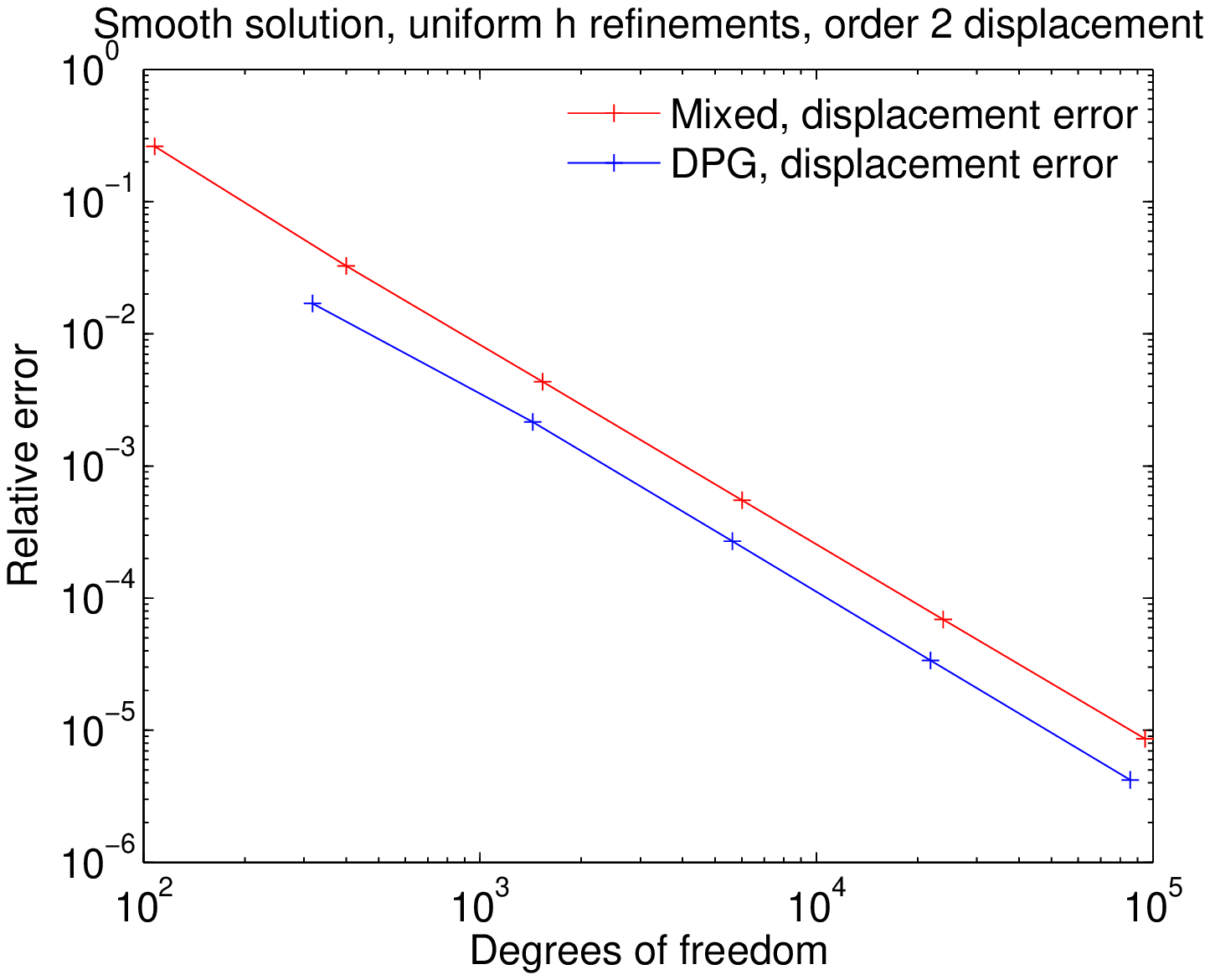}
}
\caption{The DPG method vs.~the mixed method. \label{fig:compare}}
\end{center}
\end{figure}

\subsection{Locking experiments}

In Figure~\ref{fig:lock} we show numerical evidence of the
locking-free property of the DPG method. The figure shows convergence
curves for various values of Poisson ratio close to the limiting
value of $0.5$.  We used piecewise bilinear elements with homogeneous
material data.  The convergence curves in Figure~\ref{fig:lock1} show
hardly any difference as $\nu$ approaches $0.5$. To be clearer, we
also plot the ratio of the $L^2$ discretization error to the best
approximation error (in $\sigma$ and $u$ combined) in
Figure~\ref{fig:lock2}. We see that the ratio remains close to the
optimal value of $1.0$ even as $\nu$ approaches $0.5$.


\begin{figure}
\begin{center}
\subfigure[Convergence curves for various $\nu$]{
  \includegraphics[width=0.47\textwidth,angle=0]{./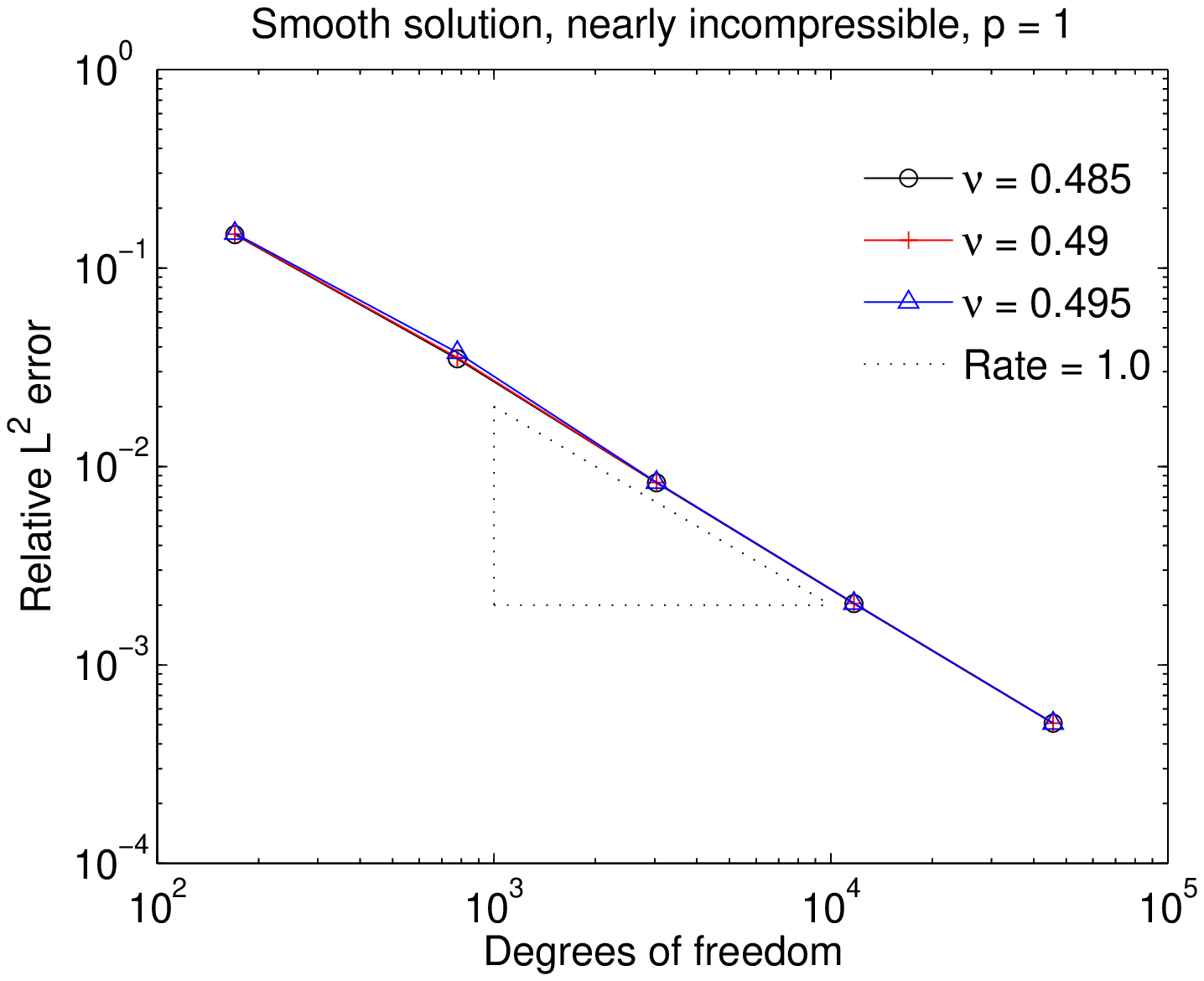}
  \label{fig:lock1}
} \subfigure[The ratio of the discretization error to the error in
best approximation as a function of $\nu$]{
  \includegraphics[width=0.47\textwidth,angle=0]{./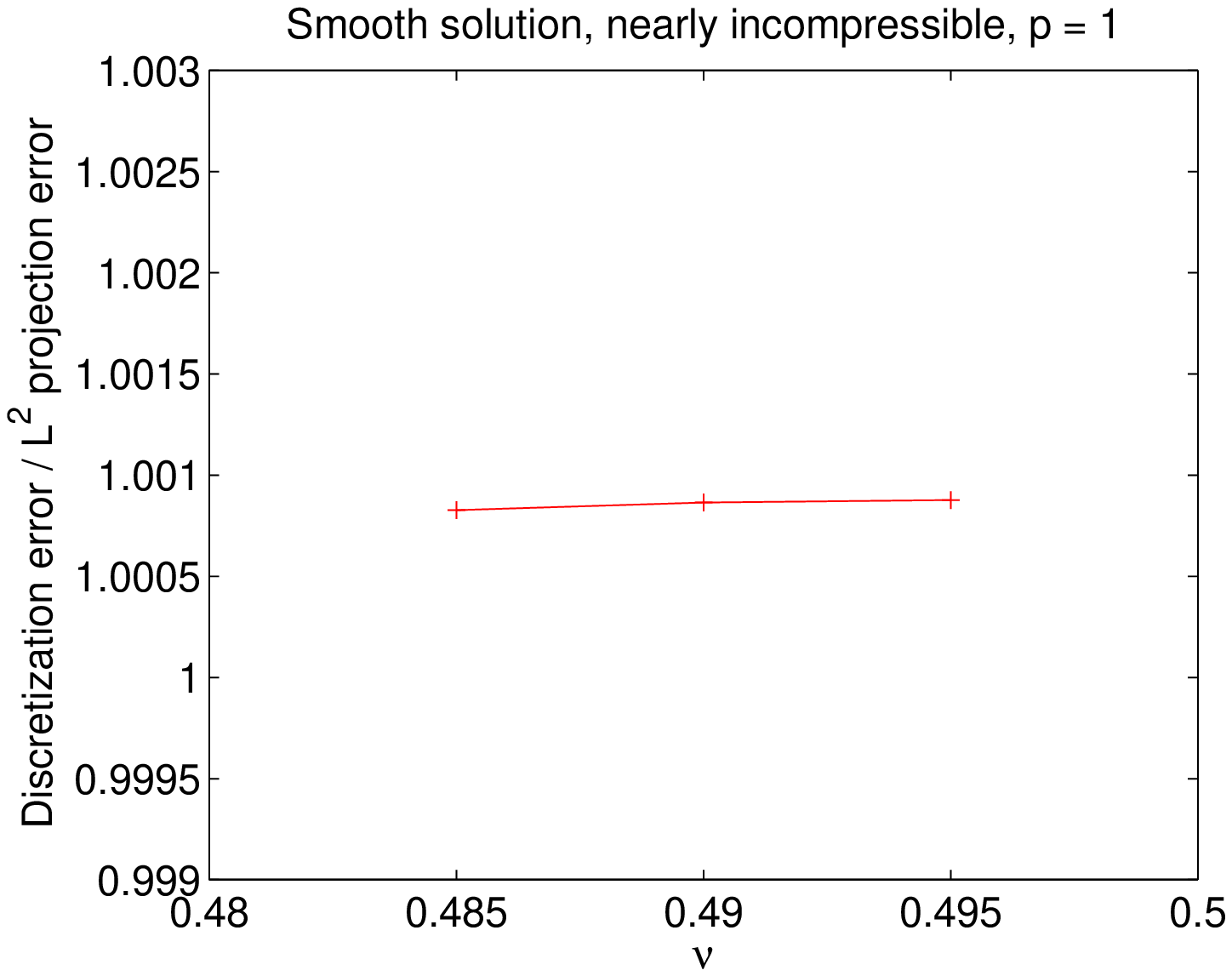}
  \label{fig:lock2}
}
\caption{Illustration of locking-free convergence (smooth data case).
  \label{fig:lock}}
\end{center}
\end{figure}

\subsection{Adaptivity}

All our adaptive schemes are based on the ``greedy'' strategy described
in~\cite{DemkoGopal:DPGanl}. This means that all elements which
contribute $50 \% $ of the maximum element error to the previously
described error representation function are marked for refinement. For
$hp$-adaptivity, we used the strategy suggested
in~\cite{AinswSenio97}, i.e., if an element contains the singularity,
it is $h$-refined, otherwise it is $p$-refined.

Figure~\ref{fig:adapt} shows results from both adaptivity
schemes. Note that a nearly optimal rate of $O(N^{-1.2})$is observed
for the $h$-adaptivity scheme. The $hp$-adaptive scheme results in an
optimal rate of $O(N^{-1.5})$. Finally, Figure~\ref{fig:mesh} shows
the $hp$ mesh obtained after 12 iterations and Figure~\ref{fig:sol:ux}
shows one component of the corresponding solution. The group relative
$L^2$ error is reduced to $0.9 \% $.

\begin{figure}
\begin{center}
  \subfigure[Adaptive $h$ refinements.]{
    \includegraphics[width=0.47\textwidth,angle=0]{./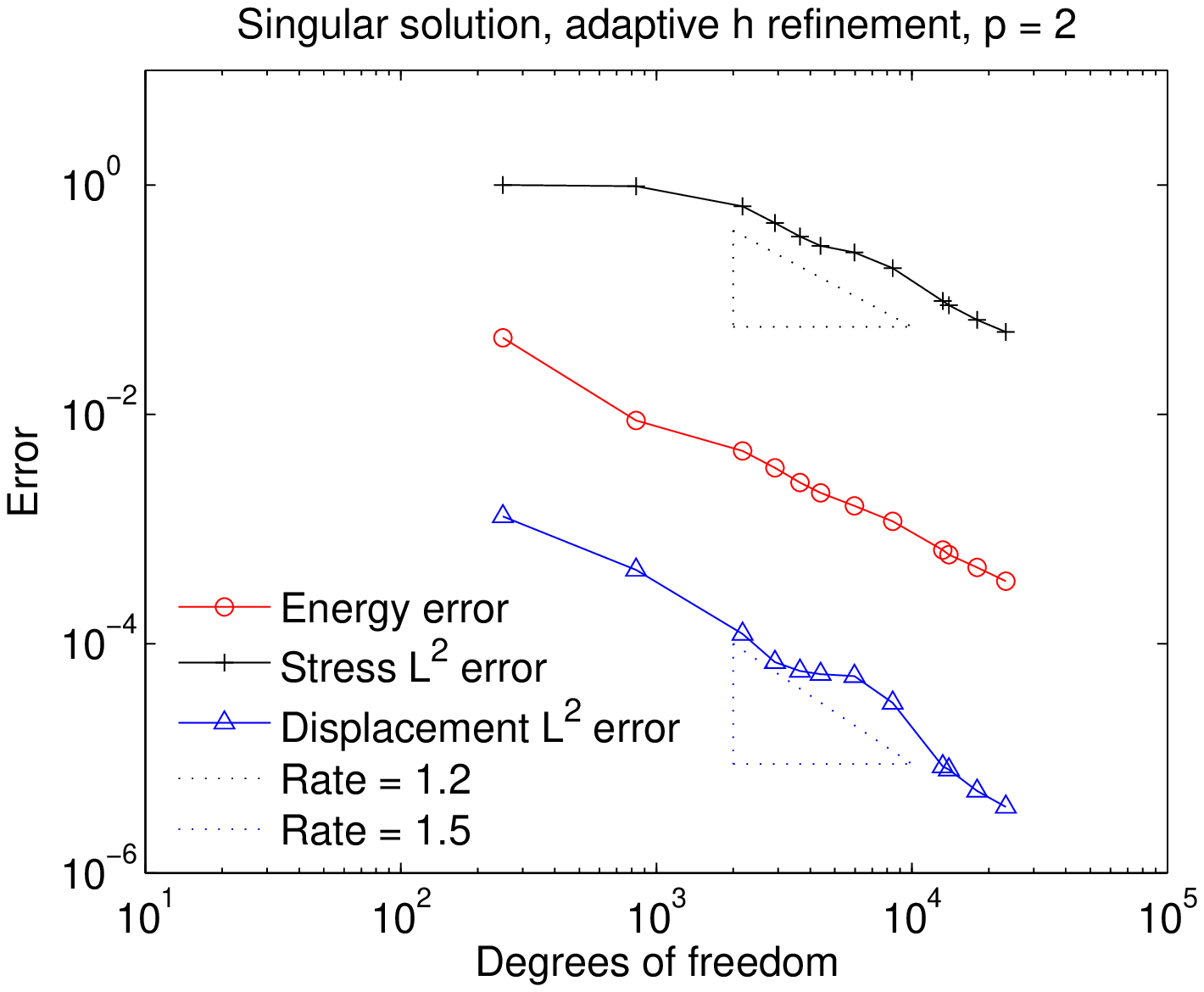}
  }
  \subfigure[Adaptive $hp$ refinements.]{
    \includegraphics[width=0.47\textwidth,angle=0]{./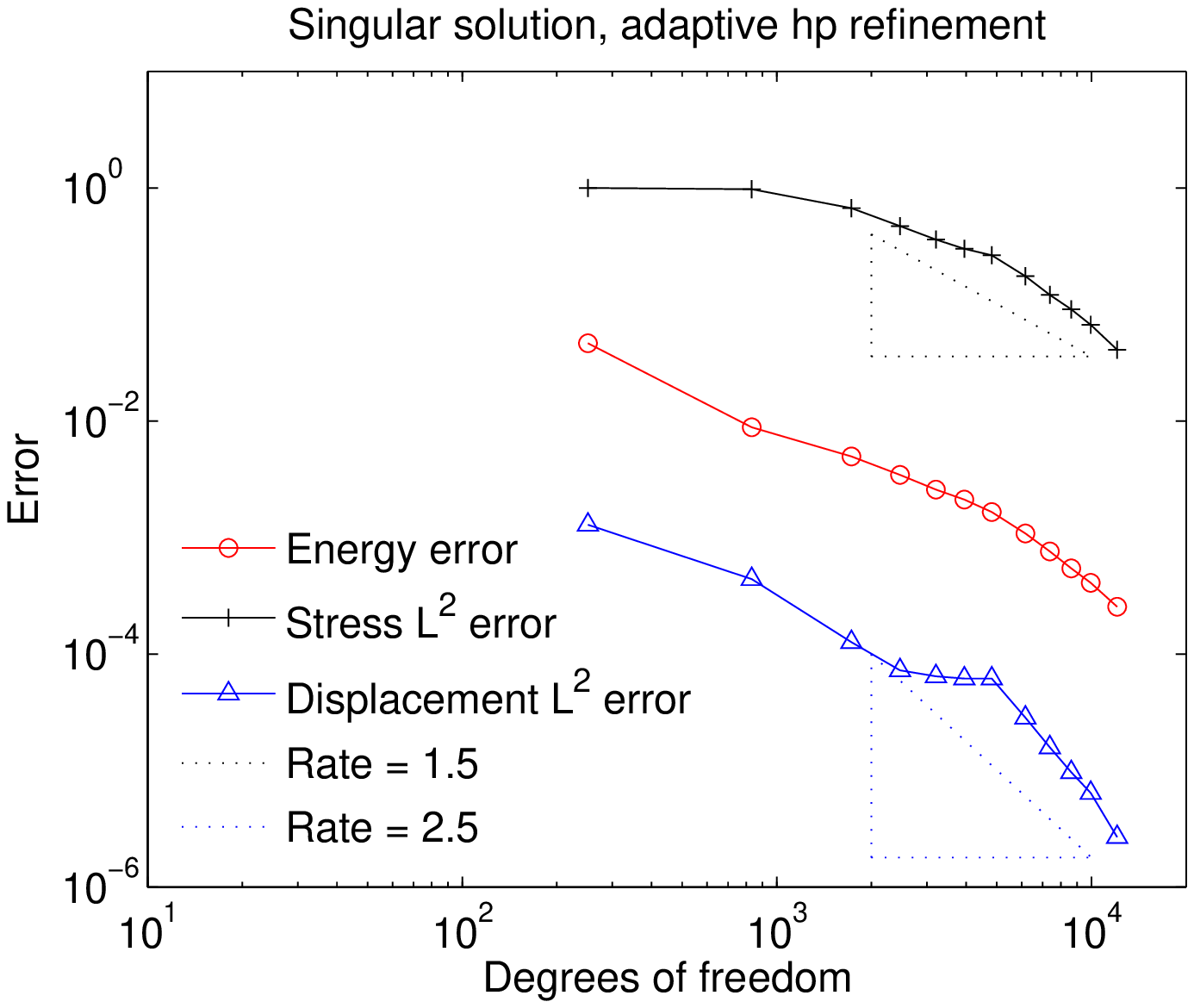}
  }

  \bigskip

  \subfigure[A comparison of refinement strategies.]{
    \includegraphics[width=0.47\textwidth,angle=0]{./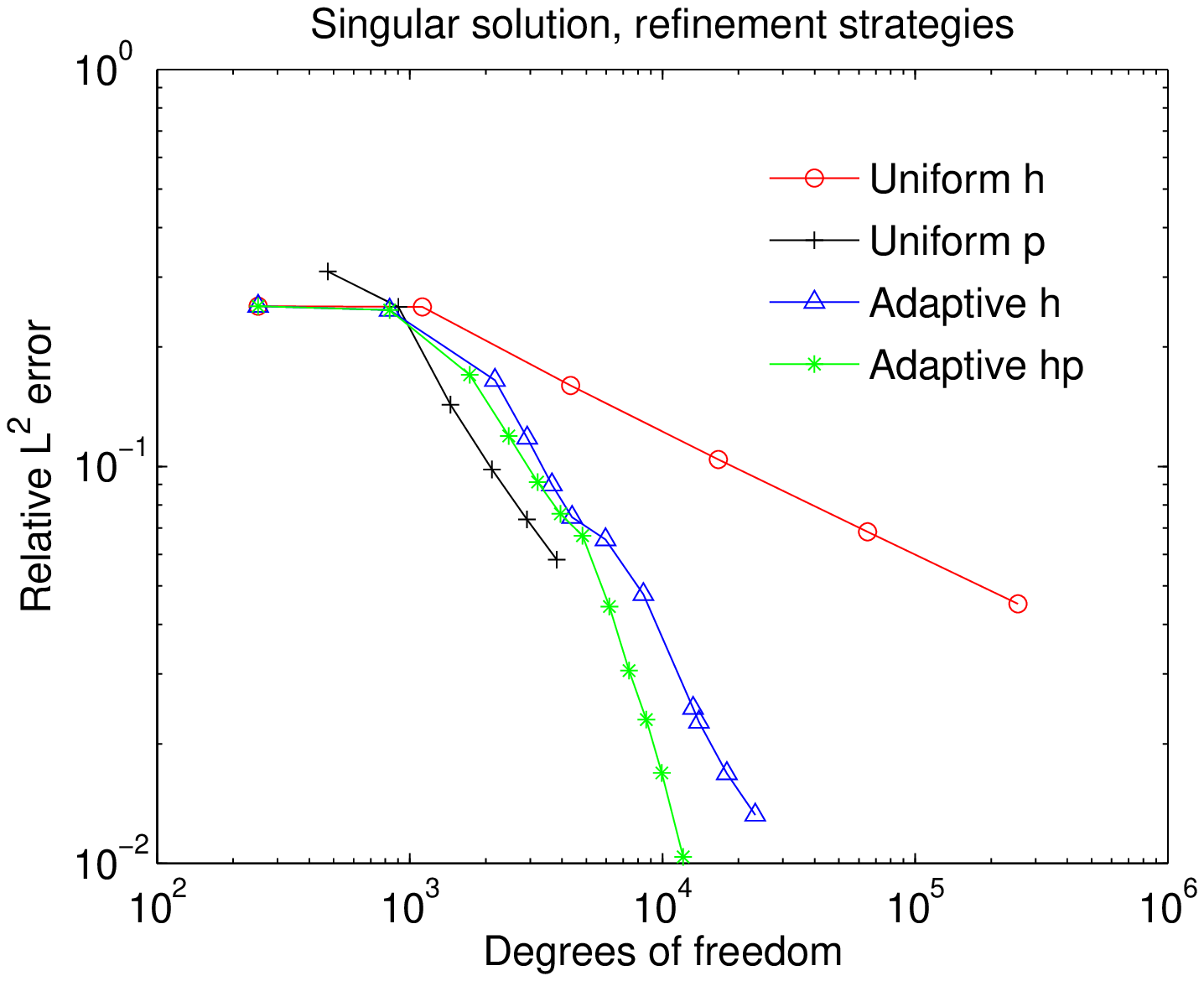}
  }
  \subfigure[Various optimal test function approximations.]{
    \includegraphics[width=0.47\textwidth,angle=0]{./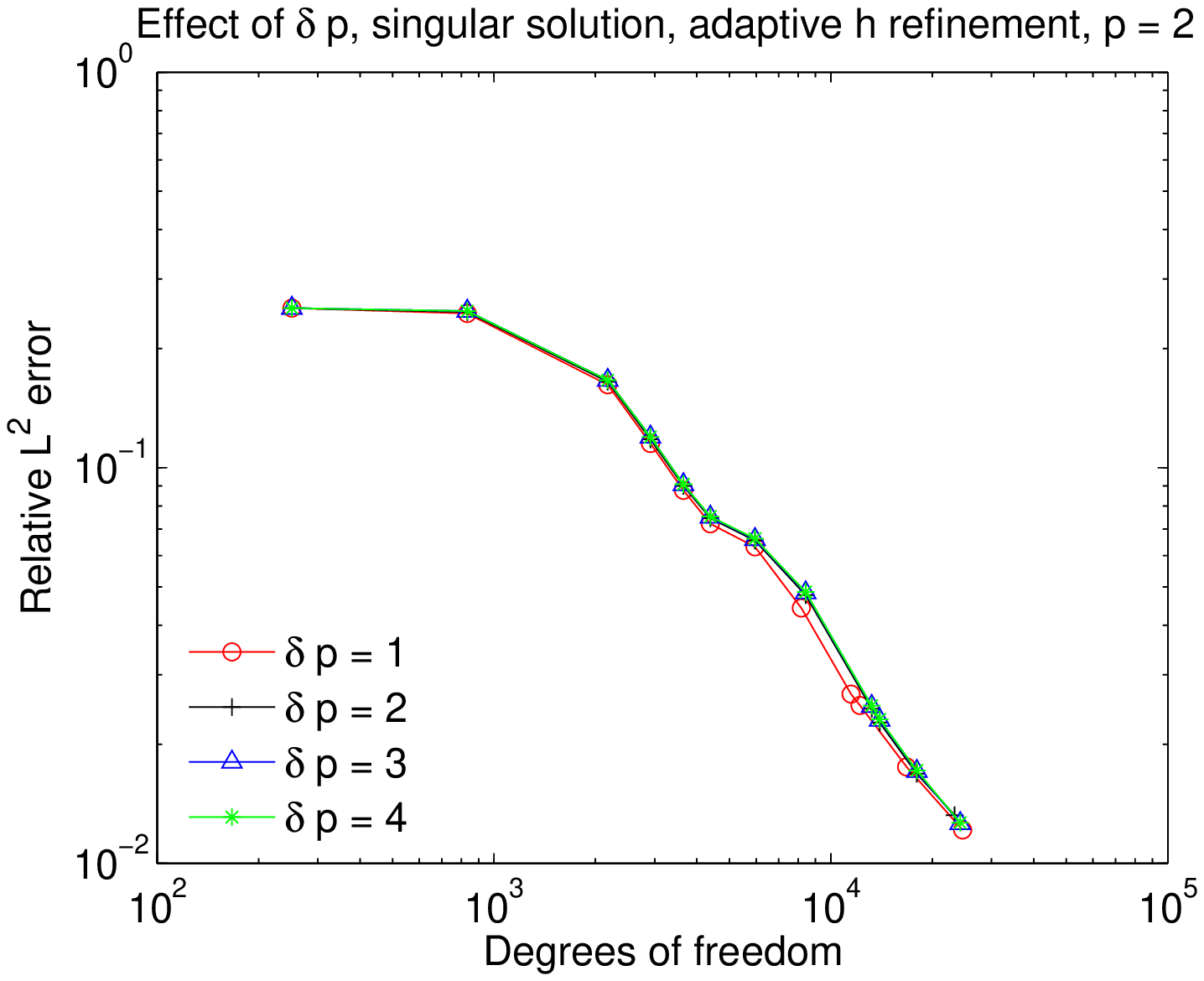}
    \label{fig:approx}
  }

  \bigskip

  \subfigure[The $hp$ mesh after 12 iterations. 
  Element degrees are represented by color. (The color scale is tied to $p+1$.)]{
    \hspace{-0.5cm}\includegraphics[width=0.32\textwidth,angle=90]{./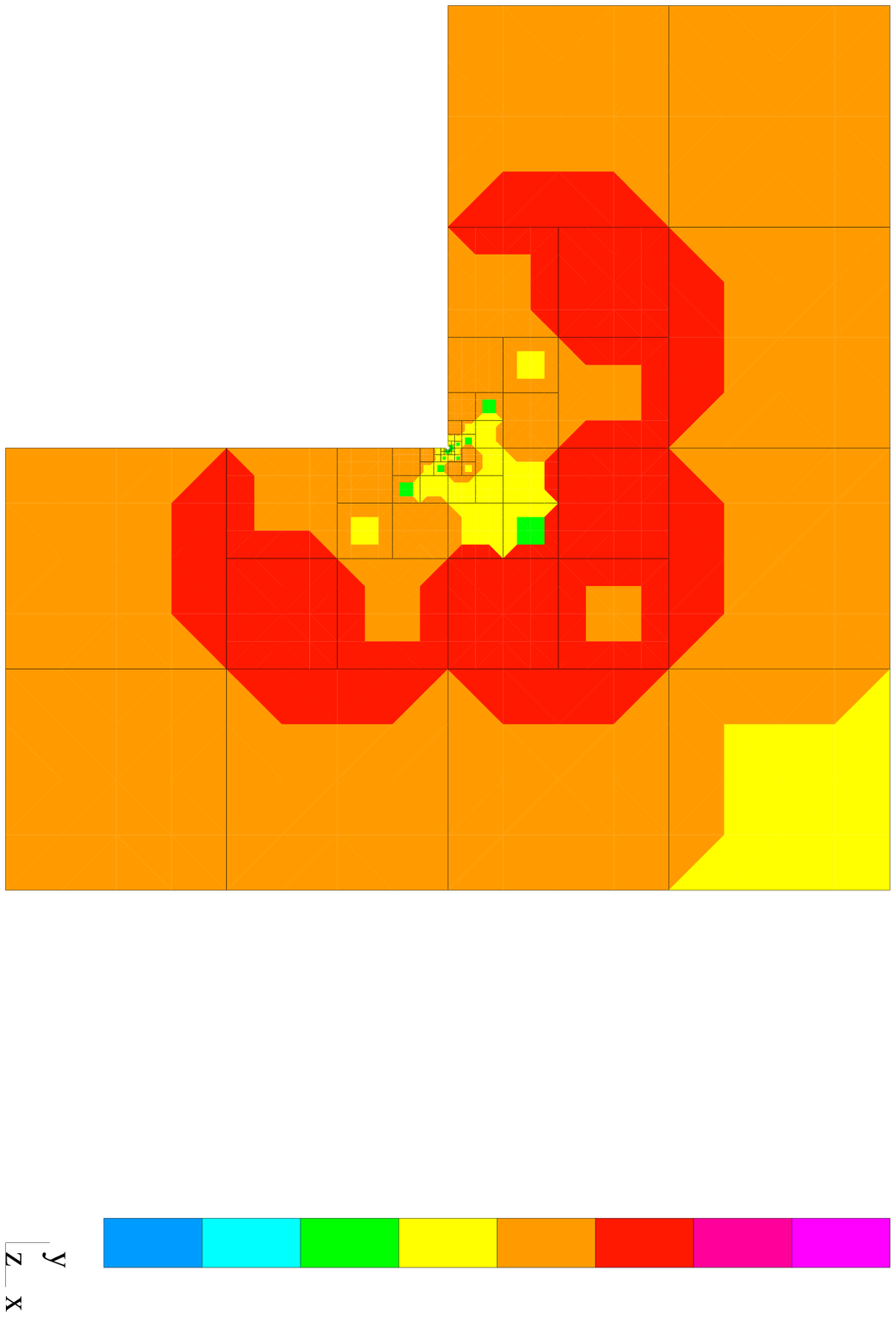}
    \label{fig:mesh}
  }\qquad
  \subfigure[The $x$-component of the computed displacement~($u_x$). The color scale  indicates value of $u_x$.]{
    \includegraphics[width=0.32\textwidth,angle=90]{./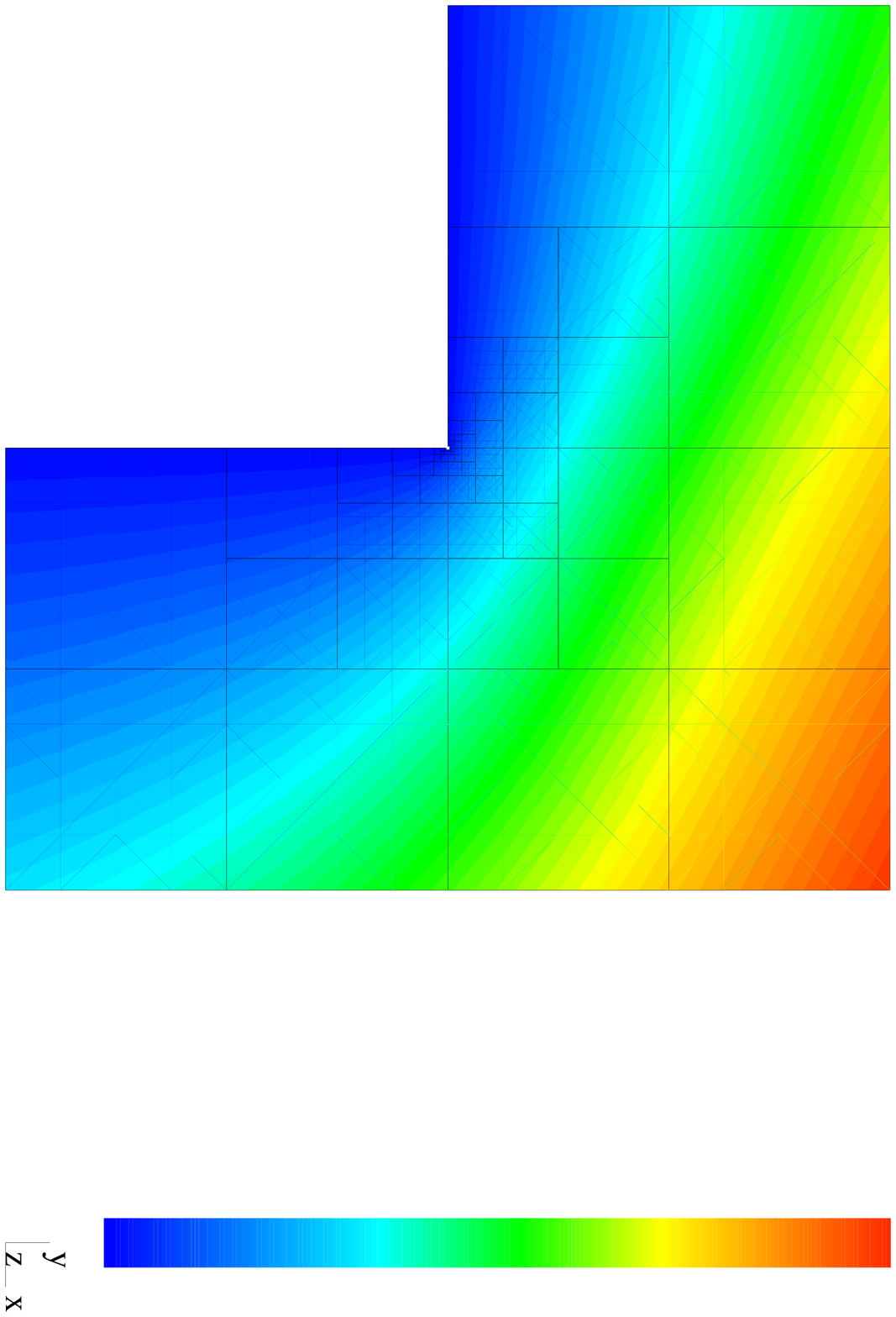}
    \label{fig:sol:ux}
  }
 \caption{Results from the adaptive scheme for the L-shaped domain. \label{fig:adapt}}
\end{center}
\end{figure}

\subsection{Approximation of optimal test functions}

Figure~\ref{fig:approx} shows the effect of $\delta p$ as seen in the
$h$-adaptive process for the L-shaped domain problem.  This measures
the effect of approximating optimal test functions using the operator
$\tilde{T}$ as opposed to $T$, Since the curves for $\delta p =2,3,4$
are coincident, it appears that we are sufficiently approximating the
optimal test functions.



\appendix

\section{A property of the weakly symmetric mixed formulation}
\label{sec:weakly-symm}

We consider a mixed method for linear elasticity with weakly imposed
stress symmetry. The method we consider differs from a standard
method~\cite{ArnolFalkWinth07} only in that it has an extra Lagrange
multiplier. It is well known that the mixed formulation does not
lock~(see e.g.,\cite{BrezziFortin1991,CarstDolzmFunke00,Stenb88}) for
homogeneous isotropic material parameters. In this appendix, we will
provide a stability result for slightly more general materials. Note
however, that the main goal of this appendix is to establish stability
estimates for the mixed method in the form needed for the analysis of
the DPG scheme in the earlier sections.

The formulation reads as follows: Find $(\sigma,u,\rho,a)\in
H(\text{div},\Omega;\mathbb{M}) \times L^{2}(\Omega;\mathbb{V}) \times
L^{2}(\Omega;\mathbb{K}) \times \mathbb{R}$ satisfying
\begin{subequations}
\label{eq:bal}
\begin{align}
\label{eq1_balanced_dmf}
 (A\sigma,\tau)_{\Omega}+(u,\dive\tau)_{\Omega}
 +(\rho,\tau)_\Omega 
 + (aQ_{0}^{-1}AI,\tau)_{\Omega}
& =(F_{1},\tau)_{\Omega},
\\
\label{eq2_balanced_dmf}
(\dive\sigma,{v})_{\Omega}
& =(F_{2},{v})_{\Omega},
\\
\label{eq3_balanced_dmf}
(\sigma,\eta)_{\Omega}
& =(F_{3},q)_{\Omega},
\\
\label{eq4_balanced_dmf}
(\sigma,bQ_{0}^{-1}AI)_{\Omega}
& =(F_{4},bI)_{\Omega},
\end{align}
\end{subequations}
for all $(\tau, {v}, \eta,b) \in H(\text{div},\Omega;\mathbb{M}),
\times L^{2}(\Omega;\mathbb{V}) \times L^{2}(\Omega;\mathbb{K}) \times
\mathbb{R}.$ Recall that $Q_0$ is as defined in~\eqref{Q0_def}. This
formulation, specifically~\eqref{eq4_balanced_dmf}, is motivated by
the same constraint that motivated the second DPG method,
namely~\eqref{zero_trace_eq}.

\begin{theorem} \label{thm:balanced_dmf} %
  Let $(F_{1},F_{2},F_{3},F_{4})\in L^{2}(\Omega;\mathbb{M}) \times
  L^{2}(\Omega;\mathbb{V})\times L^{2}(\Omega;\mathbb{M})\times
  L^{2}(\Omega;\mathbb{M})$. Then:
  \begin{enumerate}
  \item Problem~\eqref{eq:bal} is uniquely solvable and the solution
    component $u$ is in fact in~$H^{1}_{0}(\Omega;\mathbb{V})$.

  \item There is a positive constant $C_{0}$ such that
    \begin{equation}
      \label{bound_balanced_dmf}
      \Vert\sigma\Vert_{H(\text{div},\Omega)}+\Vert u\Vert_{H^{1}(\Omega)}+\Vert\rho\Vert_{\Omega}+\vert a\vert
      \leq C_{0} \big(\Vert F_1\Vert_{\Omega}
      +\Vert F_2\Vert_{\Omega}
      +\Vert F_3\Vert_{\Omega}
      +\Vert F_4\Vert_{\Omega}\big).
    \end{equation}

  \item In addition, if Assumption~\ref{asm:iso} holds, then the
    constant $C_{0}$ in~\eqref{bound_balanced_dmf} takes the
    form
    \begin{equation}
      \label{uniform_bound_dmf}
      C_{0} = \bar{c}_{1}P_{0}^{-1}B^{4}(\Vert A\Vert+P_{0}+1)^{2}(\Vert A\Vert+B),
    \end{equation}
    where $\bar{c}_{1}$ is a positive constant independent of $A$.

  \end{enumerate}
\end{theorem}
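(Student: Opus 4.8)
The plan is to recognise~\eqref{eq:bal} as a standard mixed (saddle-point) variational problem and apply the \Babuska--Brezzi theory, then make the constants explicit under Assumption~\ref{asm:iso}. Take $\sigma\in H(\text{div},\Omega;\MMM)$ as the primal variable and $(u,\rho,a)\in L^{2}(\Omega;\VVV)\times L^{2}(\Omega;\KKK)\times\RRR$ as the multipliers, with bilinear forms $\mathfrak a(\sigma,\tau)=(A\sigma,\tau)_{\Omega}$ and $\mathfrak b(\tau;(v,\eta,b))=(v,\dive\tau)_{\Omega}+(\eta,\tau)_{\Omega}+bQ_{0}^{-1}(AI,\tau)_{\Omega}$; then~\eqref{eq:bal} is exactly the mixed problem for the pair $(\mathfrak a,\mathfrak b)$. (As an aside connecting to~\cite{ArnolFalkWinth07}: the substitution $\hat\sigma=\sigma+aQ_{0}^{-1}I$ turns~\eqref{eq1_balanced_dmf}--\eqref{eq3_balanced_dmf} into the standard weakly symmetric mixed method for $(\hat\sigma,u,\rho)$ and~\eqref{eq4_balanced_dmf} into the scalar identity $a=Q_{0}(I,AI)_{\Omega}^{-1}\big[(\hat\sigma,AI)_{\Omega}-Q_{0}(F_{4},I)_{\Omega}\big]$, well defined since $(I,AI)_{\Omega}=\int_{\Omega}\trace(AI)\ge Q_{0}|\Omega|>0$. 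This already gives parts~(1)--(2) with an unspecified constant once one invokes~\cite{ArnolFalkWinth07}, but it discards exactly the constraint that makes the estimate $Q_{0}$-robust, so the direct argument below is needed for part~(3).)

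I would verify the two Brezzi conditions. First, coercivity of $\mathfrak a$ on $Z:=\ker\mathfrak b=\{\tau\in H(\text{div},\Omega;\MMM):\dive\tau=0,\ \skw\tau=0,\ \int_{\Omega}\trace(A\tau)=0\}$: on $Z$ one has $\|\tau\|_{\Hdiv\om}=\|\tau\|_{\Omega}$ and $\tau$ is symmetric, so under Assumption~\ref{asm:iso}, $\mathfrak a(\tau,\tau)=\int_{\Omega}\big(P|\tau_{D}|^{2}+Q(\trace\tau)^{2}/N\big)\ge P_{0}\|\tau_{D}\|_{\Omega}^{2}$. I would upgrade this to control of the full $L^{2}$ norm by the Poincar\'e-type bound $\|\trace\tau\|_{\Omega}\le c_{\Omega}\|\tau_{D}\|_{\Omega}$ valid on $Z$: indeed $\dive\tau=0$ gives $\nabla(\trace\tau)=-N\dive\tau_{D}$ with $\|\nabla(\trace\tau)\|_{H^{-1}(\Omega)}\le N\|\tau_{D}\|_{\Omega}$, the Ne\v{c}as inequality bounds $\|\trace\tau-\overline{\trace\tau}\|_{\Omega}$ by this, and the weighted mean-zero condition $\int_{\Omega}Q\trace\tau=0$ controls $\overline{\trace\tau}$; the resulting $c_{\Omega}$ depends only on $\Omega$ and mildly on the ratio $B$, not on $Q_{0}$, $P$, or $\|A\|$. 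Hence $\mathfrak a$ is coercive on $Z$ with constant $\gtrsim P_{0}/(1+c_{\Omega}^{2})$, independent of $Q_{0}$. Second, the inf-sup condition for $\mathfrak b$: the $(v,\eta)$-part of $\mathfrak b$ satisfies the inf-sup condition of~\cite{ArnolFalkWinth07} (surjectivity, with bounded right inverse, of $\tau\mapsto(\dive\tau,\skw\tau)$ on $H(\text{div},\Omega;\MMM)$), and the remaining scalar direction $b$ is handled by $\tau=\pm I$, for which $\dive\tau=0$, $(\eta,\tau)_{\Omega}=0$, and $Q_{0}^{-1}(AI,I)_{\Omega}\ge|\Omega|>0$, so that adding a multiple of $I$ to any test function perturbs only the $b$-slot; combining gives the inf-sup constant, depending on geometric quantities and on $B$ but not on $Q_{0}$, with $\|A\|$ entering only through the boundedness of $\mathfrak a$.

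With both Brezzi conditions in hand, the theory gives unique solvability and the a priori bound~\eqref{bound_balanced_dmf}. The component $u$ lies in $H_{0}^{1}(\Omega;\VVV)$ because testing~\eqref{eq1_balanced_dmf} with $\DD(\Omega;\MMM)$-functions yields $\nabla u=A\sigma+\rho+aQ_{0}^{-1}AI-F_{1}\in L^{2}(\Omega)$ in the distributional sense, hence $u\in H^{1}(\Omega;\VVV)$, and then $u|_{\partial\Omega}=0$ follows from $\langle u,\tau n\rangle_{\partial\Omega}=0$ for all $\tau\in H(\text{div},\Omega;\MMM)$ (Green's formula together with the distributional form of~\eqref{eq1_balanced_dmf}) and the surjectivity of the normal trace onto $H^{-1/2}(\partial\Omega;\VVV)$. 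For part~(3), the explicit form~\eqref{uniform_bound_dmf} is obtained by threading the coercivity constant ($\gtrsim P_{0}$, up to a $B$-dependent geometric factor), the inf-sup constant, and the $\mathfrak a$-bound $\|A\|$ through the quantitative Brezzi inverse estimate; and to see that no bare power of $Q_{0}$ survives, one uses $(I,AI)_{\Omega}\ge Q_{0}|\Omega|$, $\|AI\|_{\Omega}=\|QI\|_{\Omega}\le\sqrt{N|\Omega|}\,\|Q\|_{L^{\infty}}=BQ_{0}\sqrt{N|\Omega|}$, and $Q_{0}=N\essinf_{\Omega}Q\le N\|Q\|_{L^{\infty}}\le N\|A\|$, whereupon the $Q_{0}$ factors cancel and the bound depends only on $B$, $P_{0}$ and $\|A\|$.

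The main obstacle is the quantitative half of part~(3): proving the trace Poincar\'e inequality on $Z$ with a constant free of $A$, and then the conceptually routine but lengthy bookkeeping that carries all Brezzi constants through to precisely the exponents $B^{4}$, $(\|A\|+P_{0}+1)^{2}$, $(\|A\|+B)$ and $P_{0}^{-1}$ in~\eqref{uniform_bound_dmf}. The qualitative statements~(1)--(2) and the $Q_{0}$-cancellation are, by comparison, routine.
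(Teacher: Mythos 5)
Your proposal follows essentially the same route as the paper's proof: \Babuska--Brezzi theory with coercivity on the kernel $\{\tau\in H(\mathrm{div},\Omega;\MMM):\dive\tau=0,\ \tau'=\tau,\ \int_\Omega\trace(A\tau)=0\}$ obtained from a $B$-dependent trace inequality (your Ne\v{c}as-plus-mean-control argument is the dual formulation of the paper's Lemma~\ref{lemma_uniform_trace}, which applies a regular right inverse of the divergence to $\trace(A\varphi)$), the inf-sup condition via the Arnold--Falk--Winther surjectivity of $\tau\mapsto(\dive\tau,\skw\tau)$ corrected by a multiple of $I$ to handle the scalar multiplier, and the distributional identity plus integration by parts to conclude $u\in H_0^1(\Omega;\VVV)$. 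The only piece you leave asserted rather than executed is the explicit constant bookkeeping yielding the precise form of $C_0$ in part~(3), which is exactly the computation the paper carries out; your account of how $P_0$, $B$, $\Vert A\Vert$ enter and how the $Q_0$ factors cancel is consistent with it.
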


We will use the \Babuska-Brezzi theory~\cite{BrezziFortin1991} and
results from~\cite{ArnolFalkWinth07} to prove this theorem. In order
to verify the conditions of the theory, we will use the following
lemma.

\begin{lemma}
\label{lemma_uniform_trace}
If Assumption~\ref{asm:iso} holds, then there is a positive constant
$\bar{c}_{0}$ independent of the material coefficient~$A$ such that
\begin{equation}
\label{uniform_trace}
\Vert\trace\varphi\Vert_{\Omega}^2
\leq
\,\bar{c}_{0}^2 B^2
\left(
  \Vert\varphi_{D}\Vert_{\Omega}^{2}+\Vert\dive\varphi\Vert_{\Omega}^{2}
\right),
\end{equation} 
for any $\varphi\in H(\text{div},\Omega;\mathbb{M})$ which satisfies
\begin{equation}
\label{A_trace}
\int_{\Omega}\trace (A\varphi)=0.
\end{equation}
\end{lemma}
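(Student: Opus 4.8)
The plan is to reduce the estimate to the classical fact that on a bounded connected Lipschitz domain the divergence operator admits a right inverse that is bounded into $H^1_0$. Set $t=\trace\varphi\in L^2(\Omega)$. Under Assumption~\ref{asm:iso} we have $A\varphi=P\varphi_{D}+Q\,\trace(\varphi)\,I/N$ with $\trace(\varphi_{D})=0$, so $\trace(A\varphi)=Q\,t$, and the hypothesis~\eqref{A_trace} is simply the weighted constraint $\int_\Omega Q\,t\,dx=0$. I would then split $t=t_0+\bar t$ into its mean $\bar t=|\Omega|^{-1}\int_\Omega t\,dx$ and its mean-zero part $t_0$, so that $\|\trace\varphi\|_\Omega^2=\|t_0\|_\Omega^2+|\Omega|\,|\bar t|^2$, and estimate the two pieces separately.

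For $t_0$ I would invoke the Bogovskii operator (see~\cite{BrezziFortin1991}): since $t_0$ has zero mean on the bounded connected Lipschitz domain $\Omega$, there is $z\in H^1_0(\Omega;\RRR^N)$ with $\dive z=t_0$ and $\|z\|_{H^1(\Omega)}\le C_\Omega\|t_0\|_\Omega$, the constant $C_\Omega$ depending only on $\Omega$ (not on $A$). Integrating by parts the pairing of $\dive\varphi$ with $z$ (legitimate for $\varphi\in\Hdiv{\Omega;\MMM}$ and $z\in H^1_0$), and using $\varphi=\varphi_{D}+(t/N)I$ together with $I:\nabla z=\dive z$, one gets
\[
\frac{1}{N}\int_\Omega t\,\dive z\,dx
= -\int_\Omega(\dive\varphi)\cdot z\,dx-\int_\Omega \varphi_{D}:\nabla z\,dx .
\]
The left-hand side equals $\tfrac1N\|t_0\|_\Omega^2$ (because $t_0$ is mean-zero), while Cauchy--Schwarz bounds the right-hand side by $(\|\dive\varphi\|_\Omega^2+\|\varphi_{D}\|_\Omega^2)^{1/2}\|z\|_{H^1(\Omega)}\le C_\Omega(\|\dive\varphi\|_\Omega^2+\|\varphi_{D}\|_\Omega^2)^{1/2}\|t_0\|_\Omega$; dividing through yields $\|t_0\|_\Omega\le N C_\Omega(\|\dive\varphi\|_\Omega^2+\|\varphi_{D}\|_\Omega^2)^{1/2}$.

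For the mean I would return to the constraint: $0=\int_\Omega Q\,t=\int_\Omega Q\,t_0+\bar t\int_\Omega Q$, whence $|\bar t|\int_\Omega Q\,dx\le\|Q\|_{L^\infty(\Omega)}\,|\Omega|^{1/2}\|t_0\|_\Omega$. Since $Q\ge Q_0$ a.e.\ on $\Omega$ (Assumption~\ref{asm:iso}), we have $\int_\Omega Q\,dx\ge Q_0|\Omega|$ and also $B=Q_0^{-1}\|Q\|_{L^\infty(\Omega)}\ge1$, so $|\Omega|\,|\bar t|^2\le B^2\|t_0\|_\Omega^2$. Adding the two contributions gives $\|\trace\varphi\|_\Omega^2\le(1+B^2)\|t_0\|_\Omega^2\le 2B^2N^2C_\Omega^2(\|\varphi_{D}\|_\Omega^2+\|\dive\varphi\|_\Omega^2)$, which is~\eqref{uniform_trace} with $\bar c_0=\sqrt2\,N\,C_\Omega$, a constant depending only on $N$ and $\Omega$ and, crucially, independent of $A$.

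The routine parts — the integration by parts, the Cauchy--Schwarz estimates, and the bookkeeping of the power of $B$ — warrant no further comment. The only genuinely non-obvious move is handling the weighted constraint $\int_\Omega Q\,\trace\varphi=0$, which is \emph{not} literal mean-zero-ness: the decomposition $t=t_0+\bar t$ recovers the mean from the constraint at the cost of a single factor $B$. The single external ingredient is the boundedness of the Bogovskii right inverse of the divergence, which is standard for bounded connected Lipschitz domains; I regard identifying this as the crux (rather than an obstacle) as the main point of the argument.
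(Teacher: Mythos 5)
Your proof is correct, and it rests on the same single external ingredient the paper uses: the bounded right inverse of the divergence into $H_0^1$ (Bogovskii), followed by integration by parts and Cauchy--Schwarz. The difference is in how the weighted constraint $\int_\Omega Q\,\trace\varphi=0$ is exploited. The paper never splits off the mean: it solves $\dive\eta = Q_0^{-1}\trace(A\varphi)=Q_0^{-1}Q\,\trace\varphi$ directly (this right-hand side has zero integral exactly by the hypothesis), then uses the pointwise bound $QQ_0^{-1}\ge 1$ to get $\Vert\trace\varphi\Vert_\Omega^2\le(\dive\eta,\trace\varphi)_\Omega$, and the factor $B$ enters through $\Vert\eta\Vert_{H^1(\Omega)}\le c_0Q_0^{-1}\Vert Q\,\trace\varphi\Vert_\Omega\le c_0B\Vert\trace\varphi\Vert_\Omega$. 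You instead decompose $\trace\varphi=t_0+\bar t$, apply the right inverse to the unweighted mean-zero part $t_0$, and recover the mean from the constraint, which is where your single factor of $B$ appears; you also invoke $B\ge1$ to absorb the $(1+B^2)$ into $2B^2$, matching the paper's power of $B$. Both arguments yield $\bar c_0$ depending only on $N$ and $\Omega$ (in particular independent of $A$), so the locking-free bookkeeping downstream is unaffected; your version costs an extra elementary step (controlling $\bar t$) but avoids carrying the weight $QQ_0^{-1}$ inside the divergence problem, while the paper's version is slightly more compact. Note that both proofs use $Q\ge Q_0$ a.e.\ (you for $\int_\Omega Q\ge Q_0|\Omega|$ and $B\ge1$, the paper for $QQ_0^{-1}\ge1$), which is exactly the property asserted in Assumption~\ref{asm:iso}, so you are on the same footing as the paper there.
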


\begin{proof}
  This proof is similar to a proof in~\cite{BrezziFortin1991}.  We can
  apply a standard regular right inverse of divergence to
  $\tr(A\varphi)$ since~\eqref{A_trace}. Hence there exists a constant
  $c_{0}>0$ and $\eta\in H_{0}^{1}(\Omega;\mathbb{V})$ such that
  \begin{equation*}
    \dive\eta = Q_{0}^{-1}\trace (A\varphi),
    \quad
    \Vert\eta\Vert_{H^{1}(\Omega)}\leq c_{0}Q_{0}^{-1}
    \Vert \trace (A\varphi)\Vert_{\Omega}.
  \end{equation*}
  By the isotropy assumption -- see~(\ref{PQ_decomp}) -- we have that
  $\trace (A\varphi) = Q\trace\varphi$. This implies that
  \begin{equation*}
    \dive\eta
    = (QQ_{0}^{-1})\trace\varphi,\quad \Vert\eta\Vert_{H^{1}(\Omega)}\leq c_{0}Q_{0}^{-1}
    \Vert Q\trace\varphi\Vert_{\Omega}.
  \end{equation*}
  Then, since $Q Q_0^{-1}\ge 1$ a.e.,~we have 
  \begin{align*}
    \Vert\trace\varphi\Vert_{\Omega}^{2}
    & \leq 
    ((QQ_{0}^{-1})\trace\varphi,\trace\varphi)_{\Omega}
    =(\dive\eta,\trace\varphi)_{\Omega}
    =( (\dive\eta )I,\varphi)_{\Omega}
    \\
    &=N(\nabla\eta-(\nabla\eta)_{D},\varphi)_{\Omega}
    = -N(\eta,\dive\varphi)_{\Omega}-N((\nabla\eta)_{D},\varphi)_{\Omega}
    \\ 
    & = -N(\eta,\dive\varphi)_{\Omega}-N(\nabla\eta,\varphi_{D})_{\Omega}
    \\
    & \leq N\Vert\eta\Vert_{H^{1}(\Omega)}
    \left( \Vert\varphi_{D}\Vert_{\Omega}^{2}+\Vert\dive\varphi\Vert_{\Omega}^{2}
      \right)^{1/2}
    \\
    & \leq c_{0}NQ_{0}^{-1}\Vert Q\trace\varphi\Vert_{\Omega}
    \left( 
      \Vert\varphi_{D}\Vert_{\Omega}^{2}+\Vert\dive\varphi\Vert_{\Omega}^{2}.
    \right)^{1/2}
\end{align*}
Setting  $\bar{c}_{0}=c_{0}N$, the lemma is proved. 
\end{proof}

\begin{proof}[Proof of Theorem~\ref{thm:balanced_dmf}]
  To apply the \Babuska-Brezzi theory, we need to verify two
  conditions: (i)~the coercivity on kernel, and (ii)~the inf-sup
  condition.

  Step~(i).~{\em Coercivity on kernel:} Define the kernel space
  \[
  V_{0}=\{\tau\in
  H(\text{div},\Omega;\mathbb{M}):\dive\tau=0, \; \tau'=\tau, \;
  \int_{\Omega} \trace (A\tau)=0\}.
  \]
  Clearly, if $A$ is uniformly coercive, then there is a positive
  constant $c_{1}$, depending on $A$, such that
  \begin{equation}
    \label{S1}
    c_{1}\Vert \tau\Vert_{H(\text{div},\Omega)}^{2}\leq (A\tau,\tau)_{\Omega},
    \qquad\forall \tau \in V_0.
  \end{equation} 
  
  If in addition Assumption~\ref{asm:iso} holds, then we can give the
  dependence of $c_1$ on $P_0$ and~$Q$, as we see now. For any
  $\tau\in V_{0}$,
  \begin{equation*}
    (A\tau,\tau)_{\Omega}\geq (P\tau_{D},\tau_{D})_{\Omega}
    \geq P_{0}\Vert\tau_{D}\Vert_{\Omega}^{2}.
  \end{equation*}
  Using Lemma~\ref{lemma_uniform_trace} and the fact that
  $\dive\tau=0$, we have that
\begin{equation*}
\Vert\tau_{D}\Vert_{\Omega}^{2}\geq (\bar{c}_{0}B)^{-2}\Vert\trace\tau\Vert_{\Omega}^{2}.
\end{equation*} 
Since $\Vert\tau\Vert_{\Omega}^{2}=\Vert\tau_{D}\Vert_{\Omega}^{2}+N^{-1}\Vert\trace\tau\Vert_{\Omega}^{2}$, 
we have that
\begin{equation*}
  \Vert\tau\Vert_{\Omega}^{2}\leq (1+N^{-1}\bar{c}_{0}^{2}B^{2})\Vert\tau_{D}\Vert_{\Omega}^{2}.
\end{equation*}
So, for any $\tau\in V_{0}$, we have that
\begin{equation*}
\dfrac{P_{0}}{1+N^{-1}\bar{c}_{0}^{2}B^{2}}\Vert\tau\Vert_{H(\text{div},\Omega)}^{2}
=
\dfrac{P_{0}}{1+N^{-1}\bar{c}_{0}^{2}B^{2}}\Vert\tau\Vert_{\Omega}^{2}
\le 
(A\tau,\tau)_{\Omega}
\end{equation*}
and we conclude that (\ref{S1}) holds with
\begin{equation}
  \label{eq:3}
  c_1 =\dfrac{P_{0}}{1+\bar{c}_{0}^{2}B^{2}}
\end{equation}
in the isotropic case.

Step~(ii).~{\em Inf-sup condition:} The inf-sup condition will follow once
we show that there is a positive constant $c_{2}$ such that for any
$(u,\rho,a)\in L^{2}(\Omega;\mathbb{V})\times
L^{2}(\Omega;\mathbb{K})\times \mathbb{R}$, there is a $\tau\in
H(\text{div},\Omega;\mathbb{M})$ satisfying
\begin{equation}
\label{S2}
(u,\dive\tau)_{\Omega}
+(\rho,\tau)_\Omega 
+ (aQ_{0}^{-1}AI,\tau)_{\Omega}
\geq c_{2}
\Vert\tau\Vert_{H(\text{div},\Omega)}
\left(\Vert u\Vert_{\Omega}+\Vert \rho\Vert_{\Omega}+\vert a\vert\right).
\end{equation}

To this end, we first recall~\cite[Theorem
$11.1$]{AFW:2006:ECH}. Accordingly, there is a $\tau_{0}\in
H(\text{div},\Omega;\mathbb{M})$ and $c_3>0$ such that
$\dive\tau_{0}=u$, $\skw\tau_{0}=\rho$, and
\begin{equation}
\label{S2_bound1}
\Vert\tau_{0}\Vert_{H(\text{div},\Omega)}\leq 
c_{3}(\Vert u\Vert_{\Omega}+\Vert\rho\Vert_{\Omega}).
\end{equation}
The constant $c_{3}$ depends only on $\Omega$. 

To prove~\eqref{S2},  we choose $\tau$ of the form 
$\tau=\tau_{0}+\lambda I$ where $\lambda\in\mathbb{R}$. 
Obviously, 
\[
\tau\in H(\text{div},\Omega,\mathbb{M}), \quad \dive\tau=u,\quad
\text{and}\quad\skw\tau=\rho,
\]
for any $\lambda \in \RRR$.  So, to show the estimate~\eqref{S2}, we
need only choose $\lambda\in\mathbb{R}$ such that
\begin{equation*}
(aQ_{0}^{-1}AI,\tau)_{\Omega} =(aQ_{0}^{-1}AI,\tau_{0}+\lambda I)_{\Omega}
=\vert a\vert^{2},
\end{equation*}
i.e.,
\begin{equation}
\label{S2_bound2}
\lambda=\dfrac{a-Q_{0}^{-1}(AI,\tau_{0})_{\Omega}}{Q_{0}^{-1}\int_{\Omega}\trace (AI)}.
\end{equation}
Then, by~(\ref{S2_bound1}), there is a positive constant $c_{2}$ such
that (\ref{S2}) holds.

If the material is isotropic, then the dependence of $c_2$ on the
components of $A$ can be tracked, as follows. Observe that since $Q
Q_0^{-1}\ge 1$, we have
\begin{gather*}
 Q_{0}^{-1}\int_{\Omega}\trace (AI)
=N\int_{\Omega}Q_{0}^{-1}Q 
\geq N\vert\Omega\vert,
\\
\vert Q_{0}^{-1}(AI,\tau_{0})_{\Omega}\vert
=\left| \int_{\Omega} (Q_{0}^{-1}Q)\trace\tau_{0}\right|
\leq c_{4}B\Vert\tau_{0}\Vert_{\Omega},
\end{gather*}
with a constant $c_4$ depending only on $\Omega$. Using this
in~\eqref{S2_bound2}, we have 
\begin{align*}
\Vert\tau\Vert_{H(\text{div},\Omega)}
& \leq 
\Vert\tau_0\Vert_{H(\text{div},\Omega)} + \Vert \lambda I\Vert_{H(\text{div},\Omega)} 
\\
& \le 
c_{3}(\Vert u\Vert_{\Omega}+\Vert\rho\Vert_{\Omega})
+
\dfrac{c_{3}c_{4}B
(\Vert u\Vert_{\Omega}+\Vert\rho\Vert_{\Omega})+\vert a\vert}
{\sqrt{N\vert\Omega\vert}}.
\end{align*}
Thus there is a constant $c_5$ depending only on $\Omega$ such that
\begin{equation*}
\Vert\tau\Vert_{H(\text{div},\Omega)}\leq c_{5}B(\Vert u\Vert_{\Omega}
+\Vert\rho\Vert_{\Omega}+\vert a\vert),
\end{equation*}
and therefore (\ref{S2}) holds with 
\begin{equation}
  \label{eq:4}
c_{2}=(c_{5}B)^{-1}
\end{equation}
in the isotropic case.  

Step~(iii). Since we have verified the two conditions of the
\Babuska-Brezzi theory, we conclude that there is a unique solution
$(\sigma,u,\rho,a)\in H(\text{div},\Omega;\mathbb{M})\times
L^{2}(\Omega;\mathbb{V}) \times L^{2}(\Omega;\mathbb{K}) \times
\mathbb{R}$.  Moreover, the theory guarantees -- see
\cite[Eq.~(1.29)--(1.30)]{BrezziFortin1991} -- that the stability estimate 
\begin{equation*}
\Vert\sigma\Vert_{H(\text{div},\Omega)}+\Vert u\Vert_{\Omega}+\Vert\rho\Vert+\vert a\vert
\leq \tilde{C} (\Vert F_1\Vert_{\Omega}+\Vert F_2\Vert_{\Omega}+\Vert F_3\Vert_{\Omega}+\Vert F_4\Vert_{\Omega})
\end{equation*} 
holds with 
\begin{equation*}
\tilde{C}=c_{1}^{-1}c_{2}^{-2}(\Vert A\Vert+c_{1}+c_{2})^{2}.
\end{equation*}
If in addition, Assumption~\ref{asm:iso} holds, then by~\eqref{eq:3}
and~\eqref{eq:4},
\begin{equation}
\label{eq:5}
\tilde{C}=P_{0}^{-1}(1+\bar{c}_{0}^{2}B^{2})c_{5}^{2}B^{2}
(\Vert A\Vert+(c_{5}B)^{-1}+P_{0})^{2}.
\end{equation}

Step~(iv). To prove that $u$ is in fact in $H^1(\Omega,\VVV)$, we
observe that by choosing $\tau\in \mathcal{D}(\Omega;\mathbb{M})$
arbitrarily, we can conclude that the equality 
\begin{equation}
\label{eq5_balanced_dmf}
A\sigma-\nabla u+\rho + aQ_{0}^{-1}AI=F_{1}
\end{equation}
holds in the sense of distributions. Hence $u\in
H^{1}(\Omega;\mathbb{V})$.  Consequently, we may
integrate~(\ref{eq1_balanced_dmf}) by parts for any $\tau \in
H(\text{div},\Omega;\mathbb{M})$ and use~\eqref{eq5_balanced_dmf} to
conclude that $u\in H^{1}_{0}(\Omega;\mathbb{V})$.  By
(\ref{eq5_balanced_dmf}),
\begin{equation*}
\Vert\sigma\Vert_{H(\text{div},\Omega)}+\Vert u\Vert_{H^{1}(\Omega)}+\Vert\rho\Vert+\vert a\vert
\leq \tilde{C}(2+\Vert A\Vert+Q_{0}^{-1}\Vert AI\Vert_{\Omega})(\Vert F_1\Vert_{\Omega}+\Vert F_2\Vert_{\Omega}
+\Vert F_3\Vert_{\Omega}+\Vert F_4\Vert_{\Omega}).
\end{equation*}
We have thus proved (\ref{bound_balanced_dmf}) with
$C_{0}=\tilde{C}(2+\Vert A\Vert+Q_{0}^{-1}\Vert AI\Vert_{\Omega})$.

If in addition, the material is isotropic, then by~\eqref{eq:5}, the
constant $C_{0}$ can be written as
\begin{equation*}
C_{0}=P_{0}^{-1}(1+\bar{c}_{0}^{2}B^{2})c_{5}^{2}B^{2}
(\Vert A\Vert+(c_{5}B)^{-1}+P_{0})^{2}(2+\Vert A\Vert+N\vert\Omega\vert B).
\end{equation*}
Since $B\geq 1$, we conclude that there is a positive constant
$\bar{c}_{1}$ such that $ C_{0}\leq \bar{c}_{1}P_{0}^{-1}B^{4}(\Vert
A\Vert+P_{0}+1)^{2}(\Vert A\Vert+B),$ as stated
in~\eqref{uniform_bound_dmf}.
\end{proof}

\begin{acknowledgements}
This work was supported in part by the NSF under grant DMS-1211635. The authors also 
gratefully acknowledge the support from the IMA (Minneapolis) during their 2010-11 program.
\end{acknowledgements}



\end{document}